\setlist[enumerate]{leftmargin=1.2em}
\setlist[itemize]{leftmargin=1.2em}
\definecolor{green}{rgb}{0,0.8,0} 
\newtheorem{theorem}{Theorem}[section]
\newtheorem{corollary}[theorem]{Corollary}
\newtheorem{lemma}[theorem]{Lemma}
\newtheorem{proposition}[theorem]{Proposition}
\theoremstyle{definition}
\theoremstyle{remark}
\newtheorem{remark}[theorem]{Remark}
\numberwithin{equation}{section}
\newcommand{\nrm}[1]{\Vert#1\Vert}
\newcommand{\nnrm}[1]{{\vert\kern-0.25ex\vert\kern-0.25ex\vert #1 
		\vert\kern-0.25ex\vert\kern-0.25ex\vert}}
\newcommand{\supp}{{\mathrm{supp}}\,}
\newcommand{\lap}{\Delta}
\newcommand{\ud}{\mathrm{d}}
\newcommand{\rd}{\partial}
\newcommand{\nb}{\nabla}
\newcommand{\alp}{\alpha}
\newcommand{\bt}{\beta}
\newcommand{\gmm}{\gamma}
\newcommand{\Gmm}{\Gamma}
\newcommand{\eps}{\epsilon}
\newcommand{\lmb}{\lambda}
\newcommand{\tht}{\theta}
\newcommand{\omg}{\omega}
\newcommand{\Omg}{\Omega}
\newcommand{\zt}{\zeta}
\newcommand{\bbR}{\mathbb R}
\newcommand{\bbT}{\mathbb T}
\newcommand{\calE}{\mathcal E}
\newcommand{\calF}{\mathcal F}
\newcommand{\calH}{\mathcal H}
\def\ct{\cos(\theta)}
\def\bX{\bar{X}}
\def\bY{\bar{Y}}
\def\bR{\bar{r}}
\def\bz{\bar{z}}
\begin{document}
	
\title{On vortex stretching for anti-parallel axisymmetric flows} 
\author{Kyudong Choi\thanks{Department of Mathematical Sciences, Ulsan National Institute of Science and Technology, kchoi@unist.ac.kr} 
	\and In-Jee Jeong\thanks{Department of Mathematical Sciences and RIM, Seoul National University, injee\_j@snu.ac.kr}
}
\date\today
 
\maketitle 
 
\begin{abstract} 
	We consider axisymmetric incompressible inviscid flows without swirl in $\mathbb{R}^3$, under the assumption that the axial vorticity is non-positive in the upper half space and odd in the last coordinate, which corresponds to the flow setup for head-on collision of anti-parallel vortex rings. For any such data, we establish monotonicity and infinite growth of the vorticity impulse on the upper half-space. As an application, we achieve infinite growth of Sobolev norms for certain   classical/smooth and compactly supported vorticity solutions in $\bbR^{3}$. 
\end{abstract}

\tableofcontents

\section{Introduction}

We are concerned with the Cauchy problem of the three-dimensional Euler equations in $\bbR^3$, which governs the dynamics of incompressible and inviscid fluids. In terms of the vorticity, the equations read
\begin{equation}\label{eq:Euler}
	\left\{
	\begin{aligned}
		& \rd_t \omg + u\cdot\nb \omg = \omg\cdot\nb u,  \\
		& u = \nabla\times (-\lap)^{-1}\omg ,
	\end{aligned}
	\right.
\end{equation} with $u(t,\cdot), \omg(t,\cdot): \bbR^3\to\bbR^3$ denoting the velocity and vorticity of the fluid, respectively. The right-hand side of the first equation in \eqref{eq:Euler} is commonly referred to as the vortex stretching term, as it can increase the size of $|\omg(t,\cdot)|$ along the flow trajectories. For Euler flows in $\bbR^3$, it is a very interesting problem to understand what types of flow configurations can lead to large growth of the vorticity maximum, especially in view of the Beale--Kato--Majda criterion which states that smooth solutions can blow up in finite time if and only if the quantity $\nrm{\omg(t,\cdot)}_{L^\infty}$ does. In the current work, we consider the specific scenario of head-on collision of anti-parallel vortex rings, which has been extensively studied using experimental and numerical methods. For such flow configurations, we obtain maximum principles and infinite growth for physically natural averaged quantities involving the vorticity. As a byproduct, we are able to deduce infinite growth of the vorticity $L^\infty$ norm for certain globally defined \textit{classical} solutions to \eqref{eq:Euler} (see the estimate \eqref{ex_linfty_gro}), among others. For some $C^\infty$--\textit{smooth} solutions, we obtain infinite growth of   the vorticity  $C^\alpha$ H\"older norm (see Theorem \ref{thm:gradient-growth}).

 
 \subsection{Main results}
 
From now on, we shall consider the specific class of Euler flows satisfying the assumption of \textit{axisymmetric without swirl} in $\mathbb{R}^3$. In this case, the Euler equations \eqref{eq:Euler} reduce to  
 \begin{equation}\label{eq:Euler-axisym-no-swirl}
	\begin{split}
		\rd_t \omg + u \cdot \nb \omg = \frac{u^r}{r} \omg  
	\end{split}
\end{equation} where $\omg(t,\bold{x})=\omg^\tht(t,r,z)e_\tht(\tht)$, $u \cdot \nb := u^r(t,r,z) \rd_r + u^z(t,r,z) \rd_z$, and $(r,\tht,z)$ is the cylindrical coordinate system in $\bbR^3$. For the axisymmetric Euler equations without swirl \eqref{eq:Euler-axisym-no-swirl}, 
  there is a unique global-in-time solution $\omg(t,\cdot)$   satisfying $\omg, r^{-1}\omg \in L^\infty([0,T]; L^1 \cap L^\infty(\mathbb{R}^3) )$ for any $T>0$  {(by
\cite{UI}, also see \cite{Raymond, Danchin}),} whenever 
the initial vorticity
\begin{equation}\label{init_assum}
  \omg_0=\omg^\tht_0(r,z)e_\tht(\tht)  \quad\mbox{satisfies}\quad  \omg_0, \,\frac{\omg_0}{r} \in L^1 \cap L^\infty(\mathbb{R}^3).
\end{equation} However, note that unlike the case of the Euler equations in a two-dimensional domain, the vortex stretching term is still present in \eqref{eq:Euler-axisym-no-swirl}, and therefore it is possible that the $L^p$--norms of the vorticity could blow up in infinite time. This type of phenomenon has been actually observed in the collision of a pair of counter-rotating vortex rings. To model this flow configuration, we shall work with vorticities which are odd in $z$ (``anti-parallel'') and non-positive on {the upper half-space $\bbR^3_+:=\{z>0\}$}; that is, $\omg^\tht$ satisfying \begin{equation}\label{eq:vort-assumption}
	\begin{split}
		\omg^\tht(t,r,z)=-\omg^\tht(t,r,-z),\qquad \omg^\tht(t,r,z) \le 0 \quad\mbox{for}\quad z\ge0.
	\end{split}
\end{equation} Then, it is not difficult to see that the assumptions in \eqref{eq:vort-assumption} are satisfied for the unique solution, if they hold for the initial data $\omg^\tht_0$. Furthermore, \eqref{eq:Euler-axisym-no-swirl} reduces to a system posed on the upper half-space $\mathbb{R}^3_+$, and we shall often regard {$\omg^\tht$ (and hence $\omg$)} as defined on $\mathbb{R}^3_+$. 
Our first main result establishes a universal lower bound for the growth rate of the vorticity impulse and support diameter for \textit{any such} compactly support initial data.

\begin{theorem}[Impulse and diameter growth]\label{thm:main}
	Together with  {\eqref{init_assum} and} \eqref{eq:vort-assumption}, assume that the initial vorticity satisfies \begin{equation*}
		\begin{split}
			0<\iint_{[0,\infty)^2}  -z  {\omg^\tht_0} (r,z) \, \ud r \ud z < + \infty  \quad \mbox{and} \quad  0<  \iint_{[0,\infty)^2} -r^2\omg^\tht_0(r,z) \, \ud r \ud z < + \infty.
		\end{split}
	\end{equation*}
	Then, the unique global solution $\omg(t,\cdot)$ to \eqref{eq:Euler-axisym-no-swirl} satisfies that 
  {\begin{equation*}
		\begin{split}
			\iint_{[0,\infty)^2} -z\omg^\tht(t,r,z) \, \ud r \ud z \quad\mbox{is strictly decreasing in time,}
		\end{split}
	\end{equation*}	}
	\begin{equation*}
		\begin{split}
			\iint_{[0,\infty)^2} -r^2\omg^\tht(t,r,z) \, \ud r \ud z \quad\mbox{is strictly increasing in time,}
		\end{split}
	\end{equation*} and for any $\varepsilon>0$,  \begin{equation*}\label{eq:main-growth}
		\begin{split}
			 \iint_{[0,\infty)^2} -r^2\omg^\tht(t,r,z) \, \ud r \ud z \ge C_\varepsilon (1 + t)^{\frac{2}{15}-\varepsilon}  \quad \mbox{for all} \quad t\ge0,
		\end{split}
	\end{equation*}  where $C_\varepsilon>0$ is a constant depending on $\varepsilon$ and $\omg_0$.  {In particular, from \eqref{eq:main-growth} we deduce that \begin{equation*}
	\begin{split}
		\sup \{ \sqrt{x_1^2+x_2^2} \,:\, \bold{x}=(x_1,x_2,x_3) \in \supp\, \omg(t,\cdot) \} \ge  C_\varepsilon (1 + t)^{\frac{1}{15}-\varepsilon}  \quad \mbox{for all} \quad t\ge0
	\end{split}
\end{equation*} holds whenever $\omg_0(\bold{x})$ is compactly supported in $r=\sqrt{x_1^2+x_2^2}$. 
}
\end{theorem}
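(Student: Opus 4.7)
My three-stage plan is: (1) derive monotonicity identities via the Stokes stream function; (2) turn the identity for $\dot I$ into a polynomial differential inequality; (3) deduce the diameter bound.

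\emph{Stage 1.} On $\{z>0\}$ introduce $\psi$ by $u^r=-r^{-1}\rd_z\psi$, $u^z=r^{-1}\rd_r\psi$, so that $\mathcal{L}\psi:=\rd_r^2\psi+\rd_z^2\psi-r^{-1}\rd_r\psi=-r\omg^\tht$ with $\psi|_{z=0}=0$ (from the oddness of $\omg^\tht$). Using $\rd_t(\omg^\tht/r)+u\cdot\nb(\omg^\tht/r)=0$ together with $\omg^\tht|_{z=0}=0$ and the fact that $(ru^r, ru^z)$ is divergence-free in $(r,z)$, a first integration by parts yields
\[
\dot I = 2\iint_{z>0} ru^r(-\omg^\tht)\,\ud r\,\ud z,\qquad \dot J = \iint_{z>0}(-\omg^\tht)u^z\,\ud r\,\ud z,
\]
where $I(t), J(t)$ denote the $-r^2$- and $-z$-impulses. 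Substituting $-\omg^\tht=r^{-1}\mathcal{L}\psi$, $u^r=-r^{-1}\rd_z\psi$, $u^z=r^{-1}\rd_r\psi$ and integrating by parts once more (the boundary at $\{z=0\}$ contributes nothing since $\psi=\rd_r\psi=0$ there; the axis is handled via the smooth expansion $\psi=a(z)r^2+O(r^4)$ with $a(z)=\tfrac{1}{2}u^z(0,z)$), I obtain
\[
\dot I = \int_0^\infty r^{-1}(\rd_z\psi|_{z=0})^2\,\ud r\;\geq\;0,\quad \dot J = -\iint_{z>0} r^{-3}(\rd_z\psi)^2\,\ud r\,\ud z -\tfrac{1}{2}\int_0^\infty u^z(0,z)^2\,\ud z\;\leq\;0.
\]
Since $\rd_z\psi|_{z=0}=-r u^r|_{z=0}$, the first identity reads $\dot I=\int_0^\infty r (u^r|_{z=0})^2\,\ud r$, the radial-velocity $L^2$ energy on the symmetry plane. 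Strict monotonicity follows from $\omg^\tht\not\equiv 0\Rightarrow\psi\not\equiv 0$: for $\dot J$, $\rd_z\psi\equiv 0$ would force $\psi$ to be $z$-independent, hence $\equiv 0$ via $\psi|_{z=0}=0$; for $\dot I$, Hopf's boundary lemma applied to $\psi\leq 0$ on $\{z>0\}$ (by maximum principle, since $\mathcal{L}\psi\geq 0$ and $\psi|_{z=0}=0$) guarantees $\rd_z\psi|_{z=0}\not\equiv 0$.

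\emph{Stage 2.} The target is $\dot I\geq C_\veps I^{-13/2-\veps}$, which integrates to $I(t)\geq C'_\veps(1+t)^{2/15-\veps}$. Three conservation laws are available: (a) $M:=\iint_{z>0}(-\omg^\tht)\,\ud r\,\ud z=\|\omg^\tht/r\|_{L^1(r\,\ud r\,\ud z)}$ is constant because $\omg^\tht/r$ is transported by the planar flow preserving $r\,\ud r\,\ud z$, which leaves $\{z>0\}$ invariant ($u^z|_{z=0}=0$ by oddness); (b) $\|\omg^\tht/r\|_{L^\infty}$ is transport-conserved; (c) the kinetic energy $E$ is conserved. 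From (a), $I\leq R^2 M$ with $R$ the support radius in $r$, so $R\gtrsim I^{1/2}$. Writing $\rd_z\psi|_{z=0}$ as a Biot--Savart integral against $-\omg^\tht$ using the image-method half-plane Green's function of $\mathcal{L}$ (whose reduced kernel is positive), one lower-bounds $\int_0^\infty r(u^r|_{z=0})^2\,\ud r$ by a concentration argument that balances the radial ($\sim R$) and vertical support sizes against $E$, $M$, and $\|\omg^\tht/r\|_{L^\infty}$, exploiting the kernel's algebraic decay at large radii. This is the principal obstacle: the sharp exponent $13/2$ requires a careful interplay of all three conservation laws, as crude pointwise bounds on the half-plane kernel would yield weaker rates.

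\emph{Stage 3.} Immediately from (a) and $I\leq R^2 M$,
\[
R(t)\geq\sqrt{I(t)/M}\geq C''_\veps(1+t)^{1/15-\veps/2},
\]
establishing the claimed support-diameter bound.
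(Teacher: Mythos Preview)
Your Stage~1 is correct and is in fact a cleaner route to the monotonicity lemmas than the paper's. In particular, your identity
\[
\dot J \;=\; -\iint_{z>0} r^{-3}(\rd_z\psi)^2\,\ud r\,\ud z \;-\;\tfrac12\int_0^\infty u^z(0,z)^2\,\ud z
\]
gives $\dot J<0$ by a single integration by parts, whereas the paper proves the same fact (its Lemma~3.3) by explicitly differentiating the elliptic-integral representation of the Biot--Savart kernel and checking sign properties of $\calE_K,\calE_E$. Your boundary formula $\dot I=\int_0^\infty r\,(u^r|_{z=0})^2\,\ud r$ is also correct and is not in the paper, which instead keeps $\dot P$ as a symmetrized quadruple integral over $(\Pi_+)^2$. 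Your Stage~3 coincides with the paper's.

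The gap is Stage~2. You correctly identify the target inequality $\dot I\gtrsim I^{-13/2-\veps}$, which is equivalent to the paper's key estimate $E\lesssim \dot P^{1-1/q}P^{1/q}$ for $q$ close to $15/13$. But ``a concentration argument that balances the radial and vertical support sizes against $E$, $M$, and $\|\omg^\tht/r\|_{L^\infty}$'' is not a proof, and two concrete obstacles stand in the way. First, your list of controlled quantities omits the decreasing $z$-impulse $J(t)\le J_0$ that you yourself derived; without it nothing prevents the bulk of the vorticity from drifting to large $z$, which would make $u^r|_{z=0}$ small and kill any lower bound on $\dot I$ while $E$, $M$, $\|\xi\|_{L^\infty}$ remain fixed. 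The paper's argument uses $Z_0$ explicitly and essentially (it enters the constant $X_0$ in Proposition~4.1). Second, the paper's mechanism for linking $E$ and $\dot P$ is structural: both are written (Lemma~3.2) as $\iiiint_{(\Pi_+)^2}K\,\omg\bar\omg$ with \emph{comparable} positive kernels, and H\"older's inequality with exponent $q$ interpolates between them, the remainder $J$ being controlled via Lemma~2.1 and the $Z_0$ bound. Your boundary representation of $\dot I$ destroys this common-kernel structure; it is a single $r$-integral of a square, not a bilinear form in $\omg$, so there is no evident H\"older partner for $E$. Unless you can recover a usable bilinear expression for $\dot I$ (or find a genuinely different interpolation argument), the sharp exponent $13/2$ will not fall out of a ``balancing'' heuristic.
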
 

\medskip

\noindent In the proof, we shall specify the dependence of $C_\varepsilon$ in $\omg_0$, which is somewhat complicated but only involves controlled quantities by Euler dynamics. We shall now illustrate several consequences of the above result. As a direct application, we are able to deduce infinite growth of the $L^p$--norms of the vorticity, 
under an additional assumption on the initial vorticity. 

\medskip

\noindent 

\begin{corollary}\label{cor:main2}
	Let $0\leq \delta<1/15$. Assume that for some $p \in [2-\delta,\infty]$, the initial data satisfies
	\begin{equation}\label{eq:vort-assumption-growth-p2}
		\begin{split}
			\left\Vert\frac{r}{|\omg_0|}\mathbf{1}_{\{ |\omg_0|>0 \}}\right\Vert_{L^{\frac{1-\delta}{1-((2-\delta)/p)}} (\bbR^3) }
			< \infty \quad 
			\mbox{and that\quad  $\omg_0(\bold{x})$ is compactly supported in $r $ }\quad
		\end{split}
	\end{equation} in addition to the hypotheses of Theorem \ref{thm:main}. Then, for each $\varepsilon>0$, we have \begin{equation*}
	\begin{split}
		\nrm{\omg(t,\cdot)}_{L^p(\mathbb{R}^3)} \ge C_\varepsilon (1+t)^{\frac{\frac{2}{15}-2\delta}{2-\delta}-\varepsilon}   \quad \mbox{for all} \quad t\ge0,
	\end{split}
\end{equation*}  where $C_\varepsilon>0$ is a constant depending on $\varepsilon$ and $\omg_0$. 
\end{corollary}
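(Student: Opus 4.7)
My plan is to deduce the $L^p$ growth from the impulse lower bound in Theorem~\ref{thm:main} through a three-factor H\"older inequality, combined with conservation of a distribution-type norm of $\omg^\tht/r$ and a polynomial upper bound on the radial extent of the support of $\omg$. In cylindrical coordinates,
\[
\iint_{[0,\infty)^2}-r^2\omg^\tht(t,r,z)\,\ud r\,\ud z \;=\; \frac{1}{2\pi}\int_{\bbR^3_+}r\,|\omg(t,\bfx)|\,\ud\bfx,
\]
so Theorem~\ref{thm:main} bounds $\int r|\omg|\,\ud\bfx$ from below by a multiple of $(1+t)^{2/15-\varepsilon}$. On the support $\Omega(t):=\{|\omg(t,\cdot)|>0\}$, I would split
\[
r\,|\omg| \;=\; r^{\delta}\cdot (r/|\omg|)^{1-\delta}\cdot |\omg|^{2-\delta}
\]
and apply H\"older with exponents $(\infty,\,p/(p-2+\delta),\,p/(2-\delta))$, all $\geq 1$ since $p\geq 2-\delta$. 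A direct check shows that the middle exponent times $(1-\delta)$ equals exactly $\sigma:=(1-\delta)/(1-(2-\delta)/p)$, matching the norm appearing in the hypothesis \eqref{eq:vort-assumption-growth-p2}.

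The key point is that the middle H\"older factor $\|r/|\omg(t,\cdot)|\mathbf{1}_{\Omega(t)}\|_{L^\sigma(\bbR^3)}$ is time-independent. Since $\omg^\tht/r$ is transported by the volume-preserving Lagrangian flow of $u$, the distribution function of $|\omg^\tht/r|$ is preserved, and hence so is that of its reciprocal $r/|\omg^\tht|=1/|\omg^\tht/r|$ on the support. This factor therefore reduces to the finite initial constant from \eqref{eq:vort-assumption-growth-p2}. The first H\"older factor is bounded by $R(t)^\delta$, where $R(t):=\sup\{\sqrt{x_1^2+x_2^2}:\bfx\in\supp\omg(t,\cdot)\}$, which is finite for each $t$ by the compact-support hypothesis on $\omg_0$. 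Assembling everything yields
\[
\|\omg(t,\cdot)\|_{L^p}^{2-\delta} \;\gtrsim\; R(t)^{-\delta}\,(1+t)^{2/15-\varepsilon}.
\]

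The main obstacle is producing the near-quadratic support bound $R(t)\lesssim (1+t)^{2+\varepsilon}$: this rate is exactly what converts the right-hand side above into the stated $(1+t)^{(2/15-2\delta)/(2-\delta)-\varepsilon}$ and pins down the threshold $\delta<1/15$. Note that at $\delta=0$ the H\"older inequality collapses to two factors and no support bound is required, consistent with the claim reducing there to $(1+t)^{1/15-\varepsilon}$. For $\delta>0$, a naive estimate such as $\|u\|_{L^\infty}\lesssim \|\omg\|_{L^1}^{1/3}\|\omg\|_{L^\infty}^{2/3}$ combined with $\|\omg\|_{L^1\cap L^\infty}\leq R(t)\|\omg_0/r\|_{L^1\cap L^\infty}$ only gives $\dot R\lesssim R$, i.e.\ exponential growth, and is therefore insufficient. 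One must instead exploit the axisymmetric Biot--Savart structure and the full transport of $\omg^\tht/r$ (not merely its $L^1\cap L^\infty$ norm) to establish a polynomial velocity bound like $\|u\|_{L^\infty}\lesssim (1+t)^{1+\varepsilon}$ under the compact-support hypothesis; once such a bound is in hand, integrating $\dot R\leq \|u\|_{L^\infty}$, taking $(2-\delta)$-th roots and relabeling $\varepsilon$ yields the corollary.
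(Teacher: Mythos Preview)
Your approach is exactly the paper's: the three-factor H\"older splitting is precisely Lemma~\ref{lem:P-ineq-Linfty2}, and the paper then invokes the support bound $R(t)\lesssim (1+t)^2$ from \eqref{est_R_t_2} together with the impulse lower bound from Theorem~\ref{thm:main}.

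The only substantive gap is that you do not actually supply the $R(t)$ bound, and your diagnosis of what is needed is slightly off. You write that one must use ``the full transport of $\omg^\tht/r$ (not merely its $L^1\cap L^\infty$ norm)'', but in fact the conserved $L^1\cap L^\infty$ norms of $\xi=r^{-1}\omg$ are sufficient; what matters is choosing the right \emph{velocity} estimate. The paper uses the Feng--\v{S}ver\'ak inequality (Lemma~\ref{lem:FS})
\[
\nrm{u}_{L^\infty}\le C\,\nrm{r\omg}_{L^1}^{1/4}\nrm{r^{-1}\omg}_{L^1}^{1/4}\nrm{r^{-1}\omg}_{L^\infty}^{1/2},
\]
and then bounds $\nrm{r\omg}_{L^1}\le R(t)^2\nrm{r^{-1}\omg_0}_{L^1}$. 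This yields $\dot R\lesssim R^{1/2}$, hence $R(t)\lesssim (1+t)^2$ with no $\varepsilon$-loss. Plugging $R(t)^\delta\lesssim (1+t)^{2\delta}$ into your inequality and taking $(2-\delta)$-th roots gives the stated exponent directly.
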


\medskip

\begin{remark}\label{rem_patch}\label{rem_strange} We remark on the classes of vorticities satisfying the assumption \eqref{eq:vort-assumption-growth-p2}, which may look a bit strange. 
	\begin{itemize}
		\item \textbf{Vortex patches}: By a vortex patch solution, we mean {$\omg=\omg^\tht e_\tht$}  of the form \begin{equation}\label{eq:patch2}
			\begin{split}
				\omg^\tht(t,r,z) = \sum_{i=1}^{n} -a_i(t,r,z) \mathbf{1}_{\Omg_i(t)}(r,z) \quad \mbox{on} \quad (r,z) \in [0,\infty)^2\, . 
			\end{split}
		\end{equation} Here, $\Omg_i$ are bounded axisymmetric open sets in $\mathbb{R}^3$ which are disjoint with each other, and $a_i$ are some bounded non-negative functions. Then, the assumptions of Corollary \ref{cor:main2} are satisfied for all $p\in[2,\infty]$ (with $\delta=0$) for \eqref{eq:patch2} as long as the sets $\Omg_i$ are separated from $\{ r = 0 \}$ and the functions $a_i$ are bounded away from 0, at the initial time. The boundary of $\Omg_i$ and $a_i$ can be taken to be $C^\infty$--smooth; it is well-known that for axisymmetric initial data of the form \eqref{eq:patch2}, $C^\infty$--regularity of $a_i$ and $\partial\Omg_i$ propagates globally in time (\cite{Hu,GaSa,S-patch}), with associated velocity field $C^\infty$--smooth in $\overline{\Omg_i}$ for each $i$. Therefore, in this class of initial data, obtain 
		\begin{equation}\label{patch_gro}
				 \max_i \nrm{a_i(t,\cdot)}_{L^p(\mathbb{R}^3)} \gtrsim t^{\frac 1 {15}-} \quad \mbox{for any}\quad p\ge2\quad\mbox{and}\quad \max_i \mathrm{diam}_{\mathbb{R}^3}\, \Omg_i(t,\cdot) \gtrsim t^{\frac 1 {15}-}. 
		\end{equation}
		\item \textbf{Vorticities with $C^{1,\gmm}$--regularity}: To apply the above result to smoother vorticities, we first observe that 
	for	compactly supported and continuous $f$, its reciprocal  can belong to $L^q,\, q>0$ on the set $\{|f|>0\}$  \textit{i.e.} $$\int |f|^{-q}\mathbf{1}_{\{|f|>0\}}\ud x<\infty$$  when the function $f$ touches the boundary of its support by the rate $x^\alpha$ with $$\alpha\cdot q <1.$$  
		For instance, in the case $p = \infty$, for each 
		$0<\gamma<{{1}/{14}}$,
		the assumption \eqref{eq:vort-assumption-growth-p2} holds for some compactly-supported (away from the symmetry axis $\{r=0\}$) $C^{1,\gamma}$ vorticities $\omg_0$ (with any choice  of   $\delta\in(\gamma/(1+\gamma),1/15)$). As a consequence, the solutions from such data $\omega_0\in C_c^{1,\gamma}(\mathbb{R}^3\setminus\{r=0\})$ 
		satisfy, for some $\alpha=\alpha(\gamma)>0$,
		\begin{equation}\label{ex_linfty_gro}
			\begin{split}
				\nrm{\omg(t,\cdot)}_{L^\infty(\mathbb{R}^3)} \ge C  (1+t)^{\alpha},   \quad t\ge0.
			\end{split}
		\end{equation} In a similar vein, in the case of $p=2$, there are some   vorticities $\omg_0\in C_c^{0,\gamma}(\mathbb{R}^3\setminus\{r=0\})$ with any $0<\gamma<1/28$ satisfying \eqref{eq:vort-assumption-growth-p2} so that
	\begin{equation}\label{ex_l2_growh}
	\nrm{\omg(t,\cdot)}_{L^2(\mathbb{R}^3)} \ge C  (1+t)^{\alpha},   \quad t\ge0
\end{equation} for some $\alpha>0$ depending on $\gamma$. In Section \ref{sec_navier}, we connect this infinite enstrophy  growth for Euler into   {enstrophy inflation} for Navier--Stokes with small viscosity. 
	\end{itemize} 
\end{remark}
\medskip 

To the best of our knowledge, this is the first construction of initial    vorticity $\omg_0$, which is compactly supported and is more regular than $C^1$ (so that 
the vorticity solution solves \eqref{eq:Euler} in the \textit{classical} sense and
the velocity $u$ is more regular than $C^2$), with infinite growth of the vorticity \textit{maximum} as $t\to\infty$, for the incompressible Euler equations in $\mathbb{R}^3$. Furthermore, we are not aware of any previous results which gives infinite growth \eqref{patch_gro} of the support diameter and $L^\infty$--norm of $\omg$ for \textit{smooth} vortex patches.

\medskip

\noindent 
Let us now present some further results which can be obtained based on Theorem \ref{thm:main}. To begin with, we show that both the  growth rate of $\beta_1(\delta):={\frac{\frac{2}{15}-2\delta}{2-\delta}}$ and the range of $\delta\in [0, 1/15)$ from Corollary \ref{cor:main2} can be upgraded, if one takes $\limsup$ in time. In the case of $L^\infty$, we have the following result: 
\begin{theorem}[Enhanced growth rate]\label{thm:main3}
	Assuming that \eqref{eq:vort-assumption-growth-p2} holds for $p = +\infty$ and for some $\delta\in[0,3/20)$  in addition to the hypotheses of Theorem \ref{thm:main}, we have 
some $\bt_0 = \bt_0(\delta)>\max\{\beta_1(\delta),0\}$ satisfying 
	 \begin{equation*}\label{eq:main-growth-inf3}
		\begin{split}
			\limsup_{t\to\infty} \frac{\nrm{\omg(t,\cdot)}_{L^{\infty}(\mathbb{R}^3)} }{(1 + t)^{\bt_0-\varepsilon} } =+\infty, \qquad  \mbox{for any } \varepsilon>0. 
		\end{split}
	\end{equation*}  
\end{theorem}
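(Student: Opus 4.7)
The plan is to argue by contradiction. Fix $\delta \in [0, 3/20)$ and a candidate rate $\beta_0(\delta) > \max\{\beta_1(\delta), 0\}$ (to be pinned down in the course of the argument), and suppose that there exist $\beta < \beta_0$ and $C_1, T_0 > 0$ with
\[\nrm{\omega(t,\cdot)}_{L^\infty(\bbR^3)} \leq C_1(1+t)^\beta, \qquad t \geq T_0.\]
The goal is to push this pointwise upper bound through a chain of estimates to an upper bound on the impulse $I_2(t) := \iint_{[0,\infty)^2} -r^2 \omega^\theta \, dr \, dz$ that contradicts the lower bound $I_2(t) \geq c(1+t)^{2/15-\varepsilon}$ supplied by Theorem \ref{thm:main}.

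The main inputs are the conserved quantities that arise from transport of $\omega^\theta/r$ along the volume-preserving axisymmetric flow: both $\nrm{\omega/r}_{L^p(\bbR^3)}$ and $\nrm{r/|\omega|\,\mathbf{1}_{\{|\omega|>0\}}}_{L^q(\bbR^3)}$ are preserved for every $p,q$. I would first establish an interpolation of the form
\[\nrm{u(t)}_{L^\infty(\bbR^3)} \lesssim \nrm{\omega(t)}_{L^\infty(\bbR^3)}^{a}\cdot(\text{conserved quantity})^{1-a}\]
for some $a \in (0,1)$ coming from the axisymmetric Biot--Savart formula. Combined with the contradiction hypothesis this yields $\nrm{u(t)}_{L^\infty} \lesssim (1+t)^{a\beta}$, and integrating $\dot R \leq \nrm{u}_{L^\infty}$ along trajectories produces $R(t) \lesssim (1+t)^{a\beta + 1}$, where $R(t):=\sup\{\sqrt{x_1^2+x_2^2}:\bfx \in \supp\,\omega(t,\cdot)\}$. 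Applying the same H\"older splitting as in Corollary \ref{cor:main2} with $p=\infty$, namely the identity $r|\omega^\theta| = (r/|\omega^\theta|)^{1-\delta}\, r^\delta\, |\omega^\theta|^{2-\delta}$ combined with the pointwise bounds $r \leq R(t)$ and $|\omega^\theta| \leq \nrm{\omega}_{L^\infty}$, I would obtain
\[I_2(t) \lesssim R(t)^\delta\, \nrm{\omega(t)}_{L^\infty}^{2-\delta} \lesssim (1+t)^{\delta(a\beta+1)+(2-\delta)\beta}.\]
Matching exponents against Theorem \ref{thm:main} forces $\beta \geq (2/15-\delta)/(2-\delta(1-a))-O(\varepsilon)$, and I would then define $\beta_0(\delta)$ as (slightly less than) this right-hand side after optimizing $a$.

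The hard part will be producing an interpolation for $\nrm{u}_{L^\infty}$ strong enough to reach the advertised regime. At $\delta=0$ the naive choice $a=1$ only reproduces $\beta_1(0)=1/15$, so a strict improvement forces $a<1$ and a genuine use of the no-swirl structure. Reaching the full range $\delta \in [0,3/20)$ with $\beta_0>0$ is more delicate still: the crude bound $R(t) \lesssim (1+t)^{a\beta+1}$ leaves the numerator $2/15-\delta$ negative once $\delta>2/15$. To push past this threshold I would bring in additional structural information---either the far-field decay $|u(\bfx)| \lesssim I_2(t)/|\bfx|^3$ dictated by the axisymmetric Biot--Savart kernel (which is useful precisely because the support is known to be moving to large $r$), or the time-integrable identity $\int_0^\infty\iint u^z(-\omega^\theta)\,dr\,dz\,dt \leq \iint -z\omega^\theta_0\,dr\,dz$ arising from the monotonicity in Theorem \ref{thm:main}---to replace $R(t) \lesssim (1+t)^{a\beta+1}$ by a bound with exponent strictly below $1$. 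Once such a sublinear-in-$t$ control on $R(t)$ is in hand the numerator $2/15-(\text{exponent})\cdot\delta$ stays positive up to $\delta=3/20$, and the remainder of the argument reduces to routine bookkeeping; the precise threshold $3/20$ in the statement should fall out of optimizing these inequalities against the H\"older split.
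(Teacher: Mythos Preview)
Your contradiction scheme has a structural gap that already shows at $\delta=0$.  Your chain of estimates pits the lower bound $P(t)\gtrsim t^{2/15-\varepsilon}$ from Theorem~\ref{thm:main} against the upper bound $P(t)\lesssim R(t)^{\delta}\nrm{\omega}_{L^\infty}^{2-\delta}$ coming from the contradiction hypothesis.  But at $\delta=0$ the factor $R(t)^{\delta}$ disappears entirely, leaving $P\lesssim\nrm{\omega}_{L^\infty}^{2}\lesssim(1+t)^{2\beta}$; comparing with $P\gtrsim t^{2/15}$ only forces $\beta\ge 1/15=\beta_1(0)$.  No refinement of the support bound $R(t)$, and no choice of interpolation exponent $a$, can help here because $R(t)$ simply does not enter.  (Your own formula $(2/15-\delta)/(2-\delta(1-a))$ makes this visible: it equals $1/15$ at $\delta=0$ for every $a$.)  So the strict inequality $\beta_0(0)>\beta_1(0)$ is unreachable by your route, and the suggested fixes---far-field velocity decay, the $z$-impulse monotonicity---are aimed at the wrong half of the inequality.

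The missing idea, and what the paper does, is to feed the contradiction hypothesis into the \emph{lower} bound for $P$ rather than the upper one.  Recall that the proof of Theorem~\ref{thm:main} rests on the kinetic-energy inequality $E\lesssim\dot P^{1-1/q}J^{1/q}$ with $J=J_1+J_2$, where the admissible range $q<15/13$ was dictated by bounding $J$ via the $3$D quantity $\nrm{\xi}_{L^1\cap L^\infty}$ (Lemma~\ref{lem:X-q}).  Under the hypothesis $\nrm{\omega}_{L^\infty}\lesssim t^{\beta}$ one can instead estimate $J_1,J_2$ via the $2$D kernel bound of Lemma~\ref{lem:F-est} together with the $2$D estimate $\iint_{\Pi_+}|\omega|\,d^{-\alpha}\lesssim\nrm{\omega}_{L^1(\Pi_+)}^{1-\alpha/2}\nrm{\omega}_{L^\infty(\Pi_+)}^{\alpha/2}$; this enlarges the range to $q<17/14$ and yields $J\lesssim P^{1/2+\varepsilon}t^{(q-1)\beta/2}$.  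Plugging the old lower bound $P\gtrsim t^{2/15}$ converts the time power into a $P$ power, and integrating $E\lesssim\dot P^{1-1/q}J^{1/q}$ produces the \emph{improved} lower bound $P\gtrsim t^{\,1/(10/3+15\beta/4)-\varepsilon}$.  Only then is this compared against the upper bound $P\lesssim t^{2\delta+(2-\delta)\beta}$ (using merely the crude $R(t)\lesssim t^{2}$), and the contradiction for $\beta<\beta_0(\delta)$ follows; the threshold $\delta<3/20$ comes from requiring the quadratic $(2\delta+(2-\delta)\beta)(10/3+15\beta/4)=1$ to have a positive root.
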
    A version of Theorem \ref{thm:main3}  for \textit{finite} $p$ can also be derived  with some different $\bt_0$ and $\delta$ depending on $p$. Similarly as in Remark \ref{rem_strange}, the above theorem guarantees the existence of data
$  \omega_0\in C_c^{1,\gamma}(\mathbb{R}^3\setminus\{r=0\})$ for each
 $0<\gamma<3/17$ whose   solution to \eqref{eq:Euler} satisfies $\limsup_{t\to\infty}\nrm{\omg(t,\cdot)}_{L^\infty}=\infty.$

\medskip

\noindent Lastly, recall that we proved infinite $L^\infty$--norm growth of the vorticity either when the initial data were a patch or $C^{1,\gmm}$--smooth. On the other hand, we are able to construct a class of $C^\infty$--initial data for which infinite growth of the $C^\alp$--norm occurs for any $\alp>0$ as $t\to\infty$.
\begin{theorem}[Infinite gradient growth]\label{thm:gradient-growth}
	There exists an initial datum $\omg_{0} \in C^\infty_c(\bbR^3)$ whose associated Euler solution to \eqref{eq:Euler} satisfies \begin{equation*}
		\begin{split}
			\limsup_{t\to\infty}\nrm{\omg(t,\cdot)}_{C^\alp(\bbR^3)} = +\infty, \qquad  \mbox{for any } \alp>0. 
		\end{split}
	\end{equation*} 
\end{theorem}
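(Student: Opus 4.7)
The plan is to establish Theorem \ref{thm:gradient-growth} by contradiction, combining the impulse growth from Theorem \ref{thm:main} with the transport structure that $\omg^\tht/r$ is preserved along the axisymmetric flow.

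First I would fix a specific datum $\omg_0 = \omg_0^\tht(r,z)\,e_\tht \in C_c^\infty(\bbR^3)$ satisfying \eqref{init_assum} and \eqref{eq:vort-assumption}, with $\supp \omg_0$ compact and disjoint from the symmetry axis $\{r=0\}$, and satisfying the two positivity hypotheses $\iint -z\omg_0^\tht > 0$ and $\iint -r^2\omg_0^\tht > 0$ of Theorem \ref{thm:main}. That theorem then yields impulse growth $I(t) := \iint_{[0,\infty)^2} -r^2\omg^\tht(t,r,z)\,\ud r\,\ud z \gtrsim (1+t)^{2/15-\veps}$, so $I(t) \to +\infty$ as $t \to \infty$.

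Next I would argue by contradiction: suppose $K := \sup_{t \geq 0}\nrm{\omg(t,\cdot)}_{C^\alp(\bbR^3)} < \infty$ for some fixed $\alp > 0$, so in particular $\nrm{\omg(t,\cdot)}_{L^\infty} \leq K$ uniformly in $t$. The crucial transport fact is that $f := -\omg^\tht/r$ is advected by the 3D volume-preserving axisymmetric flow $\Phi_t$, so each superlevel set $E_c(t) := \{|f(t,\cdot)|>c\}$ has preserved 3D Lebesgue measure $V_c = |E_c(0)|$. Combined with the pointwise identity $|\omg| = rf$ and $\nrm{\omg}_\infty \leq K$, this forces $E_c(t) \subset \{r \leq K/c\}$ for every $c > 0$. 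Splitting the impulse $I(t) = I_c(t) + J_c(t)$ into a ``bulk'' part over $\{|f|>c\}$ and a ``tail'' part over $\{0<|f|\leq c\}$, the bulk is uniformly bounded by $I_c(t) \leq (K/c)^2 \nrm{f}_\infty V_c$, so all growth must come from $J_c$. Using the $C^\alp$ bound in the tail region together with conservation of the total support 3D volume $V_0$, the goal is to derive a uniform upper bound on $J_c(t)$, contradicting $I(t) \to \infty$.

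The main obstacle is closing this tail estimate in the $C^\infty$ setting. For $C^{1,\gmm}$ data with $0 < \gmm < 1/14$, the integrability of $1/|\omg_0|$ on $\{|\omg_0|>0\}$ used in Corollary \ref{cor:main2} and Remark \ref{rem_strange} yields the contradiction directly, but for $\omg_0 \in C_c^\infty$ this reciprocal is never integrable, so the $L^\infty$ bound alone is insufficient. Closing the argument would require exploiting the $\alp$-H\"older regularity quantitatively---via Schauder estimates $\nrm{u}_{C^{1,\alp}} \leq C\nrm{\omg}_{C^\alp} \leq CK$, giving uniform Lipschitz control on the flow map $\Phi_t$ and thereby limiting how much $\supp \omg(t)$ can stretch radially while preserving 3D volume. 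An alternative, constructive route would be to take $\omg_0$ as a carefully chosen mollification of a $C^{1,\gmm}$-datum from Theorem \ref{thm:main3}, and use continuous dependence of the Euler evolution together with a diagonal argument in $(\alp_n, t_n) \to (0,\infty)$ to transfer the $L^\infty$-unboundedness of the $C^{1,\gmm}$-solution to $C^\alp$-unboundedness of the mollified solution for every $\alp > 0$ simultaneously.
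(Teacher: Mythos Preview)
Your proposal does not close, and the mechanism you are reaching for is not the one the paper uses.

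\medskip

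\textbf{The gap in your argument.} You correctly identify that for a $C^\infty$ datum the reciprocal condition \eqref{eq:vort-assumption-growth-p2} fails, so the route via Corollary \ref{cor:main2} is blocked. But neither of your two suggested fixes works. In the first, the Schauder bound $\nrm{\nb u}_{L^\infty}\le C\nrm{\omg}_{C^\alp}\le CK$ only gives Lipschitz constants for the flow map that grow like $e^{CKt}$; it does \emph{not} give a uniform-in-time bound on the $r$-coordinate of the tail region $\{0<|f|\le c\}$. Hence your tail contribution $J_c(t)\le c\int_{\{0<|f|\le c\}} r^2\,\ud x$ is not uniformly bounded, and the splitting $I(t)=I_c(t)+J_c(t)$ yields no contradiction. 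The second route, a mollification/diagonal argument, has no clear mechanism either: continuous dependence in $L^2$ on a fixed time interval says nothing about the $C^\alp$ norm of the limit as $t\to\infty$.

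\medskip

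\textbf{What the paper actually does.} The paper's proof is structurally different in every respect. The datum is \emph{not} separated from the axis; on the contrary, the support of $\xi_0$ touches $\{r=0\}$ and in fact $\xi_0(r,z)=-z$ on a small box near the origin. The proof does not use the impulse growth $P(t)\to\infty$ at all. Instead, it uses the monotonicity of $Z(t)$ (Lemma \ref{lem:z-down}) together with the assumed bound $\nrm{\omg}_{L^\infty}\le M$ to show that a fixed positive amount of mass stays in the bounded box $\{0\le z\le 2,\ 0\le r\le M\}$ for all time. This gives a uniform-in-time positive lower bound on $-\rd_z u^z(t,\mathbf{0})$, i.e.\ a persistent hyperbolic saddle at the origin. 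One then tracks the quantity $\xi/z^\alp$ along a trajectory starting at $(\eps,L)$: it satisfies $\rd_t(\xi/z^\alp)+u\cdot\nb(\xi/z^\alp)=-\alp^{-1}(u^z/z)(\xi/z^\alp)$, and the hyperbolic structure forces exponential growth of $|\xi|/z^\alp$ while the trajectory remains in $[0,L]^2$. Since the exit time tends to infinity as $\eps\to0$ (again by the Lipschitz bound on $u^r$ near $r=0$), this contradicts the assumed $C^\alp$ bound via $\nrm{\omg}_{C^\alp}\ge -r\xi/z^\alp$ evaluated at the trajectory. This is a Bahouri--Chemin / Kiselev--\v{S}ver\'ak / Zlato\v{s} type mechanism, not an impulse argument.
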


\subsection{Discussion}

Let us present some motivation for studying vorticities of the form \eqref{eq:vort-assumption} as well as related previous works. 

\medskip

\noindent \textbf{Head-on collision of antiparallel vortex rings}. Our primary motivation for studying axisymmetric vorticity satisfying \eqref{eq:vort-assumption} comes from the study of head-on collision of vortex rings (\cite{KaMi,KaMy,ShLe,CWCCC,SLZF,LN,SSH,PeRi,Os,CLL,GWRW,SMO}). Since a single vortex ring with negative axial vorticity travels downward, two vortex rings with opposite sign will ``collide'' with each other if the ring with negative vorticity is on the above of the other. 
We illustrate the setup in Figure 
 \ref{fig_lab} where the dashed lines represent direction of the flow. \begin{figure}
\centering
\begin{subfigure}{.5\textwidth}
  \centering
  \includegraphics[width=1\linewidth]{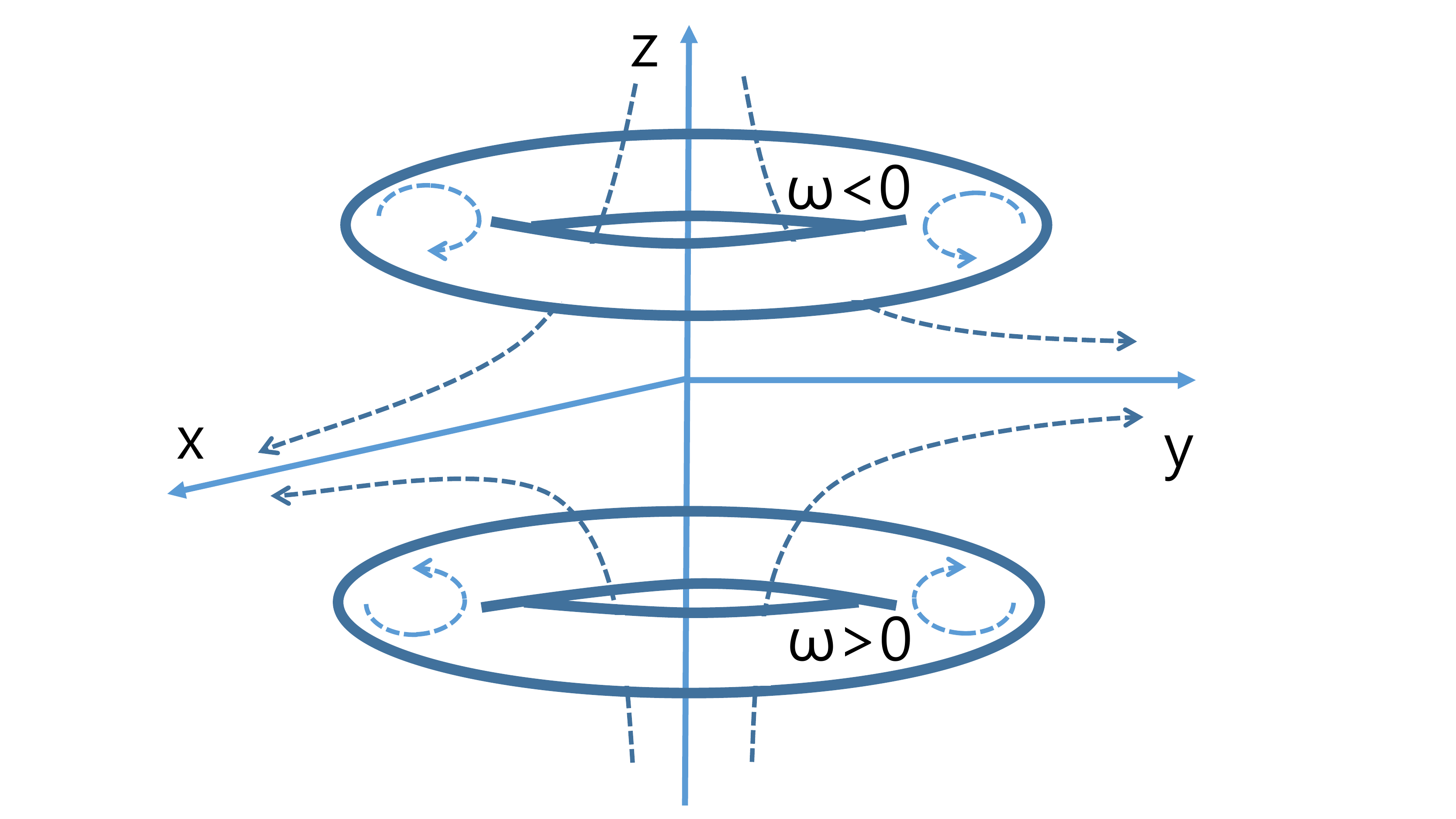}
  \caption{in $xyz$-space}
  \label{fig:sub1}
\end{subfigure}%
\begin{subfigure}{.5\textwidth}
  \centering
  \includegraphics[width=1\linewidth]{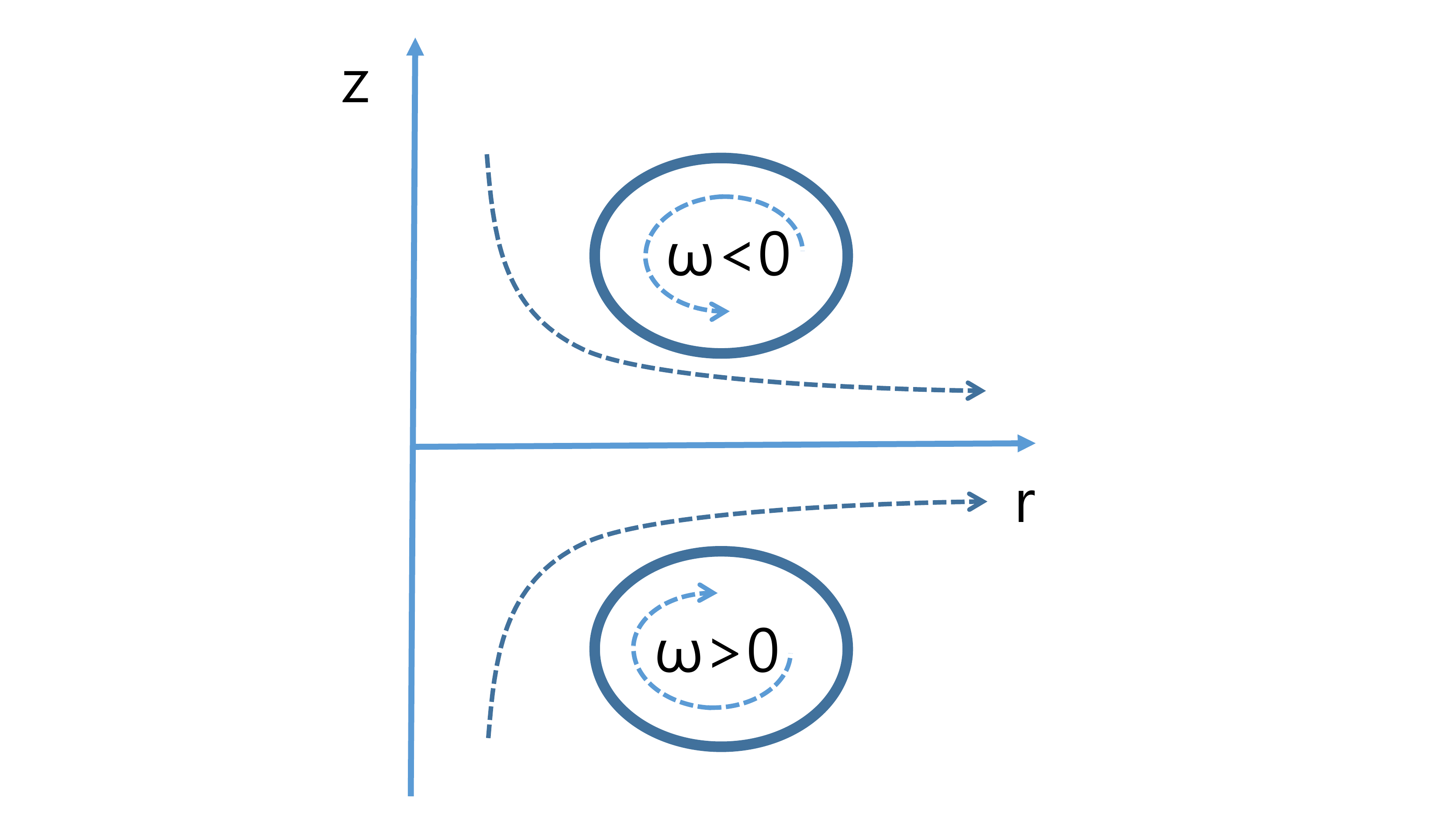}
  \caption{in the cross-section in $rz$-plane}
  \label{fig:sub2}
\end{subfigure}
\caption{An illustration on  antiparallel vortex rings}
\label{fig_lab}
\end{figure}
The interested readers may find the flow visualizations in \cite{youtube1} as well as in aforementioned references. 
This flow setup exhibits several fascinating features including strong vortex stretching, bulging instability, rebound effects, breakdown of axisymmetry and vortex reconnection, and has been studied extensively using experimental, theoretical, and computational methods. The authors in \cite{CWCCC} demonstrate that the evolution of the vortex rings goes through three stages, before their breakdown: (i) free-traveling, (ii) vortex stretching, and (iii) viscous dissipation. This is intuitively clear: initially, the rings are separated and they travel towards each other as the interaction is weak. As they become sufficiently close, the rings start to stretch out to radial direction due to incompressibility. Numerical simulations and experiments clearly show that as the Reynolds number becomes large, the rings get stretched further away from the symmetry axis before viscous dissipation becomes dominant. In the current work, we employ the inviscid equation and focus on the vortex stretching phenomenon for large times. The basic question we try to answer is: what is the rate of vorticity growth as the time goes to infinity? To answer this question, one can hope to gain some insights from the simpler case of two-dimensional flows. 

\medskip

\noindent \textbf{Vortex dynamics in the two-dimensional case}. Under the odd symmetry assumption in $z$, axisymmetric Euler flows without swirl share structural similarities with two-dimensional Euler flows subject to the odd-odd symmetry: the planar vorticity $\omg$ satisfying $\omg(x_1,x_2) = -\omg(-x_1,x_2)=-\omg(x_1,-x_2)$ in $\bbR^2$. Assuming that the vorticity is non-negative on the first quadrant $(\bbR_+)^2$, it can be shown that the overall flow is directed southeast on $(\bbR_+)^2$. If one takes a single point vortex with positive sign on $(\bbR_+)^2$ and extend it to $\bbR^2$ with odd-odd symmetry, then one can directly compute that the vortex on the first quadrant goes out to the  $x_1$-axis linearly in time while its second component asymptotes to a positive constant (\cite{Yang}). Even for general non-negative vorticities on $(\bbR_+)^2$, a similar result is available (\cite{ISG99}): the center of vorticity $\int_{(\bbR_+)^2} x_1 \omg(t,x) dx $ grows linearly for all positive times. We note that this odd-odd scenario has been used to prove growth of $|\nb\omg|$ in two-dimensional Euler flows (\cite{Den2,Den3,KS,Z,KiLi,J-JMFM}).

\medskip

\noindent \textbf{Dyson model for vortex rings}. Returning to axisymmetric flows and the problem of finding the rate of vortex stretching, it is natural to first consider the dynamics of circular vortex filaments (thin-cored vortex rings), which are the axisymmetric versions of point vortices in 2D. The well-known difficulty in the filament case is that for a fixed circulation $\Gmm$ and distance to the axis $R$, the self-induction velocity diverges as the core radius $a$ goes to zero: the calculation attributed to Kelvin (\cite{Thomson,HH,Lamb}) shows that for $a \ll R$, \begin{equation*}
	\begin{split}
		u^z = \frac{\Gmm}{4\pi R} \left( \ln\frac{8R}{a} - \frac{1}{4} + O(\frac{a}{R}) \right). 
	\end{split}
\end{equation*} To handle this difficulty, the usual assumption is that the core of each vortex ring remains circular with a \textit{fixed} radius; this is the \textit{localized induction approximation}. Then, proceeding similarly as in the 2D case, 
one can derive a system of ordinary differential equations for the motion of thin cored vortex rings (see \cite{Lamb,Yang} for instance); it is commonly referred to as Dyson's model in the literature (\cite{Dyson}). Applying Dyson's model to the system of two antiparallel vortex rings, one obtains that the dynamics is essentially the same as the point vortex motion under the odd-odd symmetry, which suggests $\nrm{\omg(t,\cdot)}_{L^\infty}\sim t$. However, as the rings move away from the symmetry axis, vortex stretching together with conservation of total vorticity $\nrm{r^{-1}\omg(t,\cdot)}_{L^1(\mathbb{R}^3)}$ forces that the core area in the $(r,z)$--plane must vanish to zero: this shows that the localized induction approximation is \textbf{self-inconsistent}. This suggests that the asymptotic behavior as $t\to\infty$ for antiparallel vortex rings could be different from the two-dimensional case and more difficult to understand. 

\medskip 

\noindent \textbf{Childress model of vortex growth}. In a series of works (\cite{Child07,Child08,ChildGil}), Childress and his collaborators investigated the exact same problem: what is the rate of vortex stretching in axisymmetric flows under the assumption \eqref{eq:vort-assumption}? Due to the conservation of $r^{-1}\omg$ along particle trajectories, for compactly supported vorticities, the question is roughly the same with finding the growth rate of the vorticity support in $r$. The naive a priori estimates give that the rate is bounded by $e^{Ct}$, with $C>0$ depending on the initial data. In \cite{Child07,Child08}, Childress shows that this upper bound can be improved to $t^2$ and $t^{4/3}$ by solving certain maximization problems using the constraint of support volume and kinetic energy, respectively. In these works Childress imposes the conditions \eqref{eq:vort-assumption} on vorticity, among others.\footnote{We note that the upper bound of $t^2$ can be proved in general using a rather recent estimate of Feng--\v{S}ver\'{a}k  \cite{FS}, see Lemma \ref{lem:FS} below.} The $t^{4/3}$ bound can be seen heuristically as follows (see \cite{Child08,ChildGil}): assuming $\omg(t,r,z) \simeq R(t)\Omg(a(t)^{-1}(r,z) -( R(t),0))$ with some profile $\Omg$ and $R(t) > a(t) > 0$, the kinetic energy  conservation dictates the scaling $R^3 a^4 \sim 1  $ while the Euler evolution forces $\dot{R} \sim Ra$. While this ansatz clearly contradicts the conservation of circulation, Childress argues that the (possible) formation of a long tail accounts for the loss of circulation. Numerical simulations from \cite{ChildGil} suggest that growth rate of $t^{4/3}$ can be indeed achieved, with asymptotic profile $\Omg$ given by the Sadovskii vortex. 

\medskip

\noindent \textbf{Growth of vorticity for three-dimensional Euler}. There has been significant interest in the construction of Euler flows \eqref{eq:Euler} with vorticity maximum growing in time. Unlike the 2D case, $\nrm{\omg(t,\cdot)}_{L^\infty}$ could increase when the vorticity aligns with an eigenvector of $\nb u$ having a positive eigenvalue. For smooth and decaying solutions to the 3D Euler equations, it is well-known that finite time singularity formation could occur if and only if $\nrm{\omg(t,\cdot)}_{L^\infty}$ blows up in finite time. A few finite-time blow-up results for finite-energy solutions to 3D Euler exist (\cite{EJE,EJO,E3,EGM,ChenHou})
but in these results either the presence of physical boundaries or lack of smoothness\footnote{{For instance, \cite{E3, EGM} for the case of $\mathbb{R}^3$ do not meet  the initial condition \eqref{init_assum}.}} of vorticity plays an important role in the growth. Even when the vorticity does not blow up, it is an interesting question to understand the possible rate of vortex stretching in various situations. If one considers axisymmetric domains with a boundary, then one can obtain growth of the vorticity maximum along the boundary \cite{Y3}. Even without physical boundaries, rather simple examples of vorticity growth can be obtained using the so-called $2+\frac{1}{2}$ dimensional flow construction, but this requires the physical domain to be in the form $\Omg\times\bbT$ where $\Omg$ is a two-dimensional domain (\cite{Y3}). Even in the case of $\bbT^3$, we are not aware of any results giving infinite vorticity growth for smooth vorticity, not relying on the $2+\frac{1}{2}$ dimensional geometry (however see \cite{Pe5}). In our previous work \cite{CJ_Hill}, we obtained arbitrarily large but finite growth of $\nrm{\omg(t,\cdot)}_{L^\infty}$ in $\mathbb{R}^3$ using perturbations of the Hill's vortex  (also see \cite{Choi2020} for stability of the vortex). In the same work, infinite growth of $\nrm{\nb^2\omg(t,\cdot)}_{L^\infty}$ was obtained for smooth and compactly supported initial data. Very recent numerical computations by Hou suggests finite time singularity for axisymmetric Euler with swirl in the interior of the domain \cite{Hou}. 

\medskip

\noindent \textbf{Large enstrophy growth for the Navier--Stokes equations}.  There has been some interest in the question of possible growth of the enstrophy, $L^2$-norm of the vorticity, in the three-dimensional Navier--Stokes equations as well as related dissipative systems (\cite{FJL,LuLu,LD,Pel,PoPr,DoGi,Pro1,Pro2,Pro3}). This quantity is particularly interesting as it determines regularity and uniqueness of the Navier--Stokes solutions. So far, only \textbf{upper bounds} on the enstrophy are available: in Lu--Doering \cite{LD}, the authors have obtained the bound \begin{equation*}
	\begin{split}
		\frac{d}{dt} \nrm{\omg(t,\cdot)}_{L^2} \le \frac{C}{\nu^3}\nrm{\omg(t,\cdot)}_{L^2}^3.
	\end{split}
\end{equation*} Interestingly, in \cite{LD}, it is reported that the maximal growth of the enstrophy seems to be achieved by flows in which a pair of vortex rings are colliding with each other, which are exactly the flows considered in the current work. Later in Section \ref{sec_navier}, we show that the infinite growth of the enstrophy for Euler flows can be translated to large enstrophy growth for Navier--Stokes flows. 

\subsection{Outline of the proof}
Let us outline the proof of Theorem \ref{thm:main}, which is inspired by a related work of Iftimie--Sideris--Gamblin \cite{ISG99}. Among others, the authors prove that for compactly supported, non-negative, and non-trivial initial vorticity on the positive quadrant $(\bbR_+)^2$ (extended to $\bbR^2$ by odd-odd symmetry as described above), the 2D Euler solution satisfies \begin{equation}\label{eq:ISG}
	\begin{split}
		\iint_{(\bbR_+)^2} x_1\omg(t,\bold{x}) \, \ud \bold{x}  \ge ct,
	\end{split}
\end{equation} with $c = c(\omg_0)>0$. Their proof consists of two main steps: the first is to establish the monotonicity property \begin{equation*}
\begin{split}
	\frac{d}{dt}\iint_{(\bbR_+)^2} x_1\omg(t,\bold{x}) \, \ud \bold{x} \ge 0 , \qquad \frac{d}{dt}\iint_{(\bbR_+)^2} x_2\omg(t,\bold{x}) \, \ud \bold{x} \le 0. 
\end{split}
\end{equation*} Then, the second step is to prove a lower bound on the kinetic energy $E = \iint_{(\bbR_+)^2} |u|^2 \, \ud \bold{x} $ of the form \begin{equation*}
\begin{split}
	E \le C \left( \iint_{(\bbR_+)^2} x_2\omg(t,\bold{x}) \, \ud \bold{x}  \right)^p \left( \frac{d}{dt}\iint_{(\bbR_+)^2} x_1\omg(t,\bold{x}) \, \ud \bold{x} \right)^{q}
\end{split}
\end{equation*} with some $p,q>0$ and $C>0$ depending on the initial data. Then, using conservation of $E$ and monotonicity of $\iint_{(\bbR_+)^2} x_2\omg(t,\bold{x}) \, \ud \bold{x} $, we conclude that \begin{equation*}
\begin{split}
	\left( \iint_{(\bbR_+)^2} x_2\omg_0(\bold{x}) \, \ud \bold{x}  \right)^{-p}\frac{E}{C} \le \left( \frac{d}{dt}\iint_{(\bbR_+)^2} x_1\omg(t,\bold{x}) \, \ud \bold{x} \right)^{q},
\end{split}
\end{equation*} from which \eqref{eq:ISG} follows. 

\medskip

\noindent While we follow the above steps, there are some difficulties arising in our case due to the form of the axisymmetric Biot--Savart law and the presence of vortex stretching.
\begin{itemize}
	\item \textbf{Monotonicity lemmas}: It turns out that in the axisymmetric case, the monotone quantities are \begin{equation*}
		\begin{split}
			P(t) = \iint_{(\bbR_+)^2}  -r^2 \omg^\tht(t,r,z) \, \ud r \ud z, \qquad  Z(t) = \iint_{(\bbR_+)^2} -z \omg^\tht(t,r,z) \, \ud r \ud z ; 
		\end{split}
	\end{equation*}  the former increases monotonically in time while the later decreases. While this tendency of vortex dynamics is clear from numerous simulations and computations, we were not able to find a rigorous proof in the literature. In the meanwhile, this confirms that the ``rebound effect'' of vortex rings is a purely viscous phenomenon as argued in \cite{CWCCC}; the convection term only brings the antiparallel rings towards each other. Unlike the two-dimensional case, the proof of \begin{equation*}
	\begin{split}
		\frac{d}{dt} \iint_{(\bbR_+)^2} -z \omg^\tht(t,r,z) \, \ud r \ud z  \le 0
	\end{split}
\end{equation*} is highly nontrivial and we could establish it only by using the formulation of the Biot--Savart law in terms of certain elliptic integrals  {(see Lemma \ref{lem:z-down}). }
	
	\item \textbf{Kinetic energy bound}: Following the ideas of Iftimie--Sideris--Gamblin, we try to bound $E$ using $\dot{P} = \frac{d}{dt}P$ and other conserved quantities. However, inspecting the expressions for $E$ and $\dot{P}$ given in Lemma \ref{lem:E-P-bounds}, it seems impossible to obtain a bound of the form $E \lesssim (\dot{P})^{a}$ for some $a>0$, unlike the 2D case. Indeed, $E$ grows faster than $\dot{P}$ if one places the initial vorticity further away from the axis, keeping the $L^p$ norms of $r^{-1}\omg_0$ under control. To handle this issue, we use $P$ itself, which controls the mass of vorticity far from the axis: our key estimate is \begin{equation*}
		\begin{split}
			E \lesssim \dot{P}^{1-\frac{1}{q}} P^{\frac{1}{q}}, 
		\end{split}
	\end{equation*} for any $q <\frac{15}{13}$. Integrating this in time gives the bound $P(t) \gtrsim 1 + t^{\frac{2}{15}-}$. 
	
	\item \textbf{Relation between $P$ and vorticity maximum}: Growth of $P$ implies that, in some averaged sense, the center of vorticity in the radial direction increases. Recalling the Cauchy formula for axisymmetric flows without swirl, this implies growth of the vorticity maximum. Unfortunately, to deduce $\nrm{\omg(t,\cdot)}_{L^\infty} \to \infty $ from $P(t)\to\infty$, we need an additional assumption on the distribution function of the initial vorticity, which is expressed in \eqref{eq:vort-assumption-growth-p2}: heuristically, the measure of the set on which the vorticity takes very small values cannot be too large.   {Under this assumption
	\eqref{eq:vort-assumption-growth-p2} with $\delta=0$, which can be easily achieved by patch-type vorticities,} we obtain\begin{equation*}
		\begin{split}
			P^{\frac12} \lesssim \nrm{\omg}_{L^\infty}. 
		\end{split}
	\end{equation*} This implies $\nrm{\omg(t,\cdot)}_{L^\infty} \gtrsim t^{\frac{1}{15}-}$. 
 {	We note that the condition \eqref{eq:vort-assumption-growth-p2} with $\delta=0$ (and $p=\infty$)
can be satisfied for some $C^\gamma$--data	$\omega_0$ whenever $\gamma<1$ while it fails 
	  for typical $C^1$--data $\omega_0$. Fortunately, there is a little room for $\delta>0$ in the analysis of Lemma \ref{lem:P-ineq-Linfty2} providing the existence of  Lipschitz data $\omega_0$ with infinite growth.
	}
	\item \textbf{Upgrading growth rate   by contradiction}: We perform an additional estimate which enhances the growth rate of $\nrm{\omg(t,\cdot)}_{L^\infty}$. The idea is to \textit{assume} a hypothetical bound of the form $\nrm{\omg(t,\cdot)}_{L^\infty} \lesssim t^\bt$. For instance, when $\delta=0$, we get $\nrm{\omg(t,\cdot)}_{L^\infty} \lesssim P(t)^{\frac{15\bt}{2}+}$ using the lower bound $P(t) \gtrsim 1 + t^{\frac{2}{15}-}$. Then, we can use this new estimate to obtain an improved bound on the kinetic energy: \begin{equation*}
		\begin{split}
			E \lesssim \dot{P}^{1-\frac{1}{q}} P^{(\frac{1}{2} + \frac{45}{56}\bt  {-})\frac{1}{q}},
		\end{split}
	\end{equation*} for any $q< \frac{17}{14}$. This time, we obtain \begin{equation*}
	\begin{split}
		\nrm{\omg(t,\cdot)}_{L^\infty}  \gtrsim P^{\frac12} \gtrsim 1 + t^{\frac{1}{\frac{5}{3} + \frac{15}{8}\bt}-}. 
	\end{split}
\end{equation*} This is a contradiction to $\nrm{\omg(t,\cdot)}_{L^\infty} \lesssim t^\bt$ when $\bt<\bt_0=\frac{1}{45}(\sqrt{670}-20) \simeq 0.13$. 
 	\item \textbf{Infinite gradient growth}: The construction in Theorem \ref{thm:gradient-growth} of {compactly supported $C^\infty(\mathbb{R}^3)$--initial data 
 	$\omg_0(\bold{x})$} exhibiting growth of the $C^\alp$--norm of the vorticity requires combining several ideas from earlier works  with some new ingredients which are specific to the three-dimensional axisymmetric Euler equations. The basic idea is to place a smoothed out version of the Bahouri--Chemin solution (\cite{BC}) in the $(r,z)$--coordinate plane. This induces a strong hyperbolic flow near the origin, which stretches the vorticity gradient (see \cite{HeKi,EJ,BL1,Den,Den2,Z,KS,X,JY,KRYZ,JKim,Do,J-JMFM} for a few references utilizing this observation).
	 However, since our domain is \textit{unbounded}, most of the mass could escape to infinity which would slow down the rate of stretching at the origin.  
In the current setup, 	 we show that this is not allowed unless $\nrm{\omg(t,\cdot)}_{L^\infty}$ diverges to infinity--this is due to the presence of the vortex stretching term. Here, it is important that we can take the support of $\omg^\tht_0(r,z)$ (as a function on  the right half-plane $\{(r,z)\in\mathbb{R}^2\,:\, r\geq 0 \}$) to touch the axis $\{r=0\}$. Furthermore, under the contradiction hypothesis that the $C^\alp$--norm of the vorticity is uniformly bounded, we obtain a uniform-in-time control of the velocity gradient near the origin. This is then sufficient to achieve infinite vorticity gradient growth, proceeding as in Zlato{\v{s}} \cite{Z}.  
\end{itemize}


\subsection{Notation and conventions} From now on, for a simpler notation, we denote the scalar axial vorticity $\omega^\theta(r,z)$ simply by the symbol $\omega$ unless there might be confusion. We shall write $\Pi = \{ (r,z) : r \ge 0, z \in \bbR \}$ and $\Pi_+ =  \{ (r,z) : r, z \ge 0 \}$. Similarly, $\bbR^3_+ = \{ x \in \bbR^3 : x_3 \ge 0 \}$. As usual, $C$ will denote various positive absolute constants whose value could vary from a line to another. When $C$ depends on a few parameters, we will sometimes express the dependence using subscripts. Unless otherwise specified, the $L^p$ norms are always taken with respect to the Lebesgue measure on $\bbR^3$.


\section{Preliminaries}\label{sec:prelim} 

\subsection{Elliptic integrals}
It will be convenient to introduce some elliptic integral functions in order to simplify the axisymmetric Biot--Savart law. We begin with 
\begin{equation*}\label{eq:F}
\begin{split}
	\calF(s) & := \int_0^\pi \frac{\ct}{(2(1-\ct)+s)^\frac12} \, \ud \tht \\
	& = \frac{1}{(4+s)^{\frac12}}\left( (2+s)\calE_K\left( \frac{4}{4+s} \right) - (4+s)\calE_E\left( \frac{4}{4+s} \right) \right). 
\end{split}
\end{equation*} Then, \begin{equation}\label{eq:DF}
\begin{split}
	\calF'(s) & = -\int_0^\pi \frac{\ct}{2(2(1-\ct)+s)^\frac32} \, \ud \tht \\
	& = \frac{1}{2s(4+s)^{\frac12}}\left( s\calE_K\left( \frac{4}{4+s} \right) - (2+s)\calE_E\left( \frac{4}{4+s} \right) \right)
\end{split}
\end{equation} and \begin{equation}\label{eq:D2F}
\begin{split}
	\calF''(s)& = \int_0^\pi \frac{3\ct}{4(2(1-\ct)+s)^\frac52} \, \ud \tht \\
	&= \frac{1}{4s^2(4+s)^{\frac32}}\left( -s(2+s)\calE_K\left( \frac{4}{4+s} \right) + (16+4s+s^2)\calE_E\left( \frac{4}{4+s} \right) \right). 
\end{split}
\end{equation} Here, the elliptic integrals are defined by \begin{equation*}\label{eq:EK}
\begin{split}
	\calE_K(\alp) = \int_0^1 (1-t^2)^{-\frac12}(1-\alp t^2)^{-\frac12} \, \ud t, 
\end{split}
\end{equation*}
\begin{equation*}\label{eq:EE}
\begin{split}
	\calE_E(\alp) = \int_0^1 (1-t^2)^{-\frac12}(1-\alp t^2)^\frac12 \, \ud t 
\end{split}
\end{equation*} for $0 \le \alp < 1$. The integrals $\calE_K$ and $\calE_E$ are usually referred to as the \textit{complete elliptic integrals} of the first and second kind, respectively. We note the relations \begin{equation*}\label{eq:DE}
\begin{split}
	\calE_E'(\alp) = \frac{1}{2\alp}\left( \calE_E(\alp)- \calE_K(\alp) \right), \qquad \calE_K'(\alp) = \frac{1}{2\alp(1-\alp)}\left( \calE_E(\alp)- (1-\alp) \calE_K(\alp) \right),
\end{split}
\end{equation*} which can be used to verify \eqref{eq:DF} and \eqref{eq:D2F}. 

\subsection{Stream function and Biot--Savart law} 

The stream function for axisymmetric no-swirl flows has the form \begin{equation}\label{eq:psi-origin}
	\begin{split}
		\psi(r,z) =  \frac{1}{2\pi} \iint_{\Pi} \left[\int_0^{\pi} \frac{r\bar{r}\ct}{\sqrt{r^2-2r\bar{r}\ct + \bar{r}^2 + (z-\bar{z})^2}}  \ud\tht  \right] \omg(\bR,\bz) \, \ud \bR \ud \bz .  
	\end{split}
\end{equation}
Following the notation of Feng--\v{S}ver\'{a}k  \cite{FeSv}, we define \begin{equation*}
	\begin{split}
		\zeta(r,\bR,z,\bz) := \left(\frac{(r-\bR)^2 + (z-\bz)^2}{r\bR}\right)^{\frac12}. 
	\end{split}
\end{equation*} Then, one may perform the $\tht$-integration in \eqref{eq:psi-origin} and obtain a simple expression for $\psi$: \begin{equation*}\label{eq:psi}
\begin{split}
	\psi(r,z) = \iint_{\Pi} G(r,\bR,z,\bz)\omg(\bR,\bz) \, \ud \bR \ud \bz , 
\end{split}
\end{equation*} with \begin{equation*}\label{eq:G}
\begin{split}
	G(r,\bR,z,\bz):= \frac{(r\bR)^\frac12}{2\pi} \calF(\zeta^2(r,\bR,z,\bz)) . 
\end{split}
\end{equation*} With this stream function, the velocity is given by \begin{equation}\label{eq:u}
\begin{split}
	u^r (r,z)= - \frac{1}{r} \rd_{z} \psi (r,z), \qquad 	u^z (r,z)=  \frac{1}{r} \rd_{r} \psi (r,z). 
\end{split}
\end{equation} One may check directly with \eqref{eq:u} that \begin{equation*}
\begin{split}
	\rd_r (r u^r) + \rd_z (r u^z) = 0, \qquad \omg =  {-\rd_r u^z + \rd_z u^r}. 
\end{split}
\end{equation*} That is, $u = (u^r,u^z)$ indeed defines an incompressible vector field in $\bbR^3$ with associated curl $\omg$. Differentiating $G$, we obtain convenient integral expressions \begin{equation*}\label{eq:ur-G}
\begin{split}
	u^r(r,z) = \iint_{\Pi}  \frac{\bz-z}{\pi r^\frac32 \bR^\frac12} \calF'(\zeta^2)\omg(\bR,\bz) \, \ud \bR \ud \bz,  
\end{split}
\end{equation*}\begin{equation*}\label{eq:uz-G}
\begin{split}
	u^z(r,z) = \iint_{\Pi} \left(  \frac{r-\bR}{\pi r^\frac32 \bR^\frac12} \calF'(\zeta^2) + \frac{\bR^\frac12}{4\pi r^\frac32}\left[ \calF(\zeta^2)-2\zeta^2 \calF'(\zeta^2) \right] \right) \omg(\bR,\bz) \, \ud \bR \ud \bz. 
\end{split}
\end{equation*} We note that the
 energy of the flow is given simply by \begin{equation*}
\begin{split}
	E[\omega] =  {\pi}
	 \iint_{\Pi} \psi(r,z) \omg(r,z) \, \ud r \ud z =
	 {\frac 1 2}  \iiiint_{\Pi^{2}}  {(r\bR)^{\frac12}} \calF(\zt^2)  \, \omg(r,z) \omg(\bR,\bz) \, \ud r \ud z \ud \bR \ud \bz . 
\end{split}
\end{equation*} We remark that $0\le E<\infty $ for 
 {$ r^{-1}\omg \in L^1\cap L^\infty(\mathbb{R}^3)$ with
$r\omega\in  L^1(\mathbb{R}^3)$. In this case, the quantity $E[\omega]$ represents  the kinetic energy of the flow (after integration by parts):
$$
E=\frac 1 2 \iiint_{\mathbb{R}^3}|u|^2 \ud \bold{x}
$$  (\textit{e.g.} see Lemmas 2.3, 2.4 of \cite{Choi2020}). 
 }

\subsection{Axisymmetric flows without swirl} Let us briefly review the well-posedness theory for axisymmetric flows without swirl. A standard reference is \cite{MB}. 

\medskip \noindent \textbf{Global existence and uniqueness with $L^1\cap L^\infty$}. As we have mentioned earlier, there is a unique global-in-time solution to \eqref{eq:Euler-axisym-no-swirl} under the assumption $\omg_0, r^{-1}\omg_0 \in L^1 \cap L^\infty (\bbR^3)$. This is a natural extension of Yudovich theory for two-dimensional Euler. The analogy is apparent if one introduces the  {\textit{relative} vorticity} function $$\xi(t,r,z) = r^{-1}\omg(t,r,z):$$ \eqref{eq:Euler-axisym-no-swirl} is simply \begin{equation*}\label{eq:E-xi}
	\begin{split}
		\rd_t \xi + u\cdot\nb \xi = 0. 
	\end{split}
\end{equation*} Since the velocity is incompressible, formally we have $\nrm{\xi(t,\cdot)}_{L^p} = \nrm{\xi_0}_{L^p}$ for all $1\le p \le\infty$. Then, using the estimate (\textit{e.g.} see Lemma 2 of  \cite{Danchin}) \begin{equation*}
\begin{split}
	\nrm{r^{-1}  {u^r}}_{L^{\infty}} \le C \nrm{\xi}_{L^1\cap L^\infty} = C \nrm{\xi_0}_{L^1\cap L^\infty}, 
\end{split}
\end{equation*} we obtain a priori estimates for the $L^p$--norms of the vorticity: \begin{equation*}
\begin{split}
	\frac{d}{dt} \nrm{\omg}_{L^p} \le C_{p} \nrm{\xi_0}_{L^1\cap L^\infty} \nrm{\omg}_{L^p} ,
\end{split}
\end{equation*} \begin{equation*}
\begin{split}
	 \nrm{\omg(t,\cdot)}_{L^p}  \le  \nrm{\omg_0}_{L^p} \exp\left( Ct  \nrm{\xi_0}_{L^1\cap L^\infty} \right). 
\end{split}
\end{equation*} In particular, we see that under the assumption $\xi_0 \in L^1 \cap L^\infty$, higher regularity of the vorticity propagates in time, using the Beale--Kato--Majda criterion (\cite{BKM}). For global existence and uniqueness, the assumption $\omg_0, \xi_0 \in L^1 \cap L^\infty (\bbR^3)$ can be weakened; see \cite{Danchin} for instance. 

\medskip \noindent \textbf{Flow map}. Given a solution in the class $\omg, \xi \in L^\infty_{t,loc}( L^1 \cap L^\infty)(\bbR^3)$, we shall denote $\Phi(t,\cdot)$ to be the associated flow map, which is defined by the unique solution to the ODE \begin{equation}\label{ode}
	\begin{split}
		\frac{d}{dt}\Phi(t,\bold{x}) = u(t,\Phi(t,\bold{x})), \qquad \Phi(0,\bold{x})=\bold{x}
	\end{split}
\end{equation}  {(for solvability,   \textit{e.g.} see 
Section 2.4 of \cite{CJ_Hill}).}
Then, along the flow, we have the important Cauchy formula \begin{equation}\label{Cauchy}
\begin{split}
	\frac{\omg(t,\Phi(t,\bold{x}))}{\Phi^r(t,\bold{x})} = \frac{\omg_0(\bold{x})}{r}
\end{split}
\end{equation} which follows from the conservation of $\xi$ along $\Phi$. Here, $\Phi = (\Phi^r, \Phi^z)$ in cylindrical coordinates.

\medskip

\noindent \textbf{A priori estimate}.  {For later use, we recall a simple estimate. }

\begin{lemma}\label{lem:X-q} Define \begin{equation*}
		\begin{split}
			X(r,\bR,z,\bz,\tht) := r^2 + \bR^2 - 2r\bR \ct + (z-\bz)^2. 
		\end{split}
	\end{equation*}
	For any $q \in [0,3)$,
	 we have \begin{equation*}\label{eq:X-q}
		\begin{split}
			\sup_{ (\bR,\bz) \in \Pi }  \iint_{\Pi}  { \int_0^{2\pi} X^{-\frac{q}{2}} \ud\theta} |\omg(r,z)| \,   \ud r \ud z \le C_q\nrm{r^{-1}\omg}_{L^1(\bbR^3)}^{1-\frac{q}{3}} \nrm{r^{-1}\omg}_{L^{\infty}(\bbR^3)}^{\frac{q}{3}}  
		\end{split}
	\end{equation*} with a constant $C_q>0$ depending only on $q$. 
\end{lemma}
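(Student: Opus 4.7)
The plan is to interpret the angular integral $\int_0^{2\pi} X^{-q/2}\,d\theta$ together with the measure $|\omega|\,dr\,dz$ on $\Pi$ as a three-dimensional Riesz potential evaluated at a single point, after which the claim reduces to a classical $L^1$--$L^\infty$ interpolation estimate. Concretely, fix $(\bR,\bz)\in\Pi$ and set $\bar{\bold{x}} := (\bR,0,\bz)\in\bbR^3$. Writing $\bold{x} = (r\cos\phi, r\sin\phi, z)$ in cylindrical coordinates, one computes directly that $|\bold{x}-\bar{\bold{x}}|^2 = r^2 + \bR^2 - 2r\bR\cos\phi + (z-\bz)^2 = X(r,\bR,z,\bz,\phi)$. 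Since $d\bold{x} = r\,dr\,dz\,d\phi$ exactly absorbs the factor $r^{-1}$ coming from $\xi := r^{-1}\omega$ (regarded as an axisymmetric function on $\bbR^3$, so with no $\phi$-dependence), the identity
\begin{equation*}
\iint_{\Pi}\int_0^{2\pi} X^{-q/2}\,d\theta\,|\omega(r,z)|\,dr\,dz \;=\; \int_{\bbR^3}|\bold{x}-\bar{\bold{x}}|^{-q}\,|\xi(\bold{x})|\,d\bold{x}
\end{equation*}
holds and reduces the whole problem to a pointwise bound on a Riesz potential.

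Next, I would apply the standard split-at-radius argument. For any $R>0$,
\begin{equation*}
\int_{\bbR^3}|\bold{x}-\bar{\bold{x}}|^{-q}|\xi|\,d\bold{x} \;\le\; \|\xi\|_{L^\infty}\int_{|\bold{x}-\bar{\bold{x}}|\le R}|\bold{x}-\bar{\bold{x}}|^{-q}\,d\bold{x} \;+\; R^{-q}\|\xi\|_{L^1} \;=\; \frac{4\pi}{3-q}\|\xi\|_{L^\infty} R^{3-q} + R^{-q}\|\xi\|_{L^1},
\end{equation*}
where the hypothesis $q<3$ is exactly what one needs for local integrability of the Riesz kernel on $\bbR^3$. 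Balancing the two terms by taking $R = (\|\xi\|_{L^1}/\|\xi\|_{L^\infty})^{1/3}$ produces the advertised bound $C_q\|\xi\|_{L^1}^{1-q/3}\|\xi\|_{L^\infty}^{q/3}$, with no remaining dependence on $\bar{\bold{x}}$, so the supremum over $(\bR,\bz)\in\Pi$ is immediate.

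There is no genuine obstacle here; this is a textbook estimate once the change-of-variables identity is written out. The only step meriting a quick verification is that identity itself, which hinges on the axisymmetry of $\omega$ and on the fact that $|\bold{x}-\bar{\bold{x}}|$ depends on the azimuthal variable $\phi$ solely through $\cos\phi$, so that rotating one of the points to lie in the half-plane $\{(\bR,0,\bz) : \bR\geq 0\}$ is harmless.
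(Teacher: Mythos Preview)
Your proof is correct and follows essentially the same route as the paper's: both rewrite the iterated integral as the three-dimensional Riesz potential $\int_{\bbR^3}|\bold{x}-\bar{\bold{x}}|^{-q}|r^{-1}\omg|\,d\bold{x}$ via the cylindrical change of variables, then apply the standard near/far splitting at scale $L=(\nrm{\xi}_{L^1}/\nrm{\xi}_{L^\infty})^{1/3}$. The only cosmetic difference is that the paper treats $q=0$ separately, whereas your optimization handles it uniformly.
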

\begin{proof}
	We note that for $\bold{x} = (r,\tht,z)$ and $\bar{\bold{x}} = (\bR,0,\bz)$, we have \begin{equation*}
		\begin{split}
			\iint_{\Pi}  { \int_0^{2\pi} X^{-\frac{q}{2}} \ud \theta}|\omg(r,z)| \, \ud r \ud z =  \iiint_{\bbR^3} \frac{1}{|\bar{\bold{x}}-\bold{x}|^{q}} \frac{|\omg(\bold{x})|}{r} \, \ud \bold{x}.
		\end{split}
	\end{equation*} The statement is clear when $q = 0$. For $0<q<3$, we consider the regions $\{ |\bold{x}-\bar{\bold{x}}| \le L \}$ and $\{ |\bold{x}-\bar{\bold{x}}| > L \}$, for some $L>0$ to be determined. In the latter region, we have that \begin{equation*}
		\begin{split}
			\iiint_{|\bold{x}-\bar{\bold{x}}|>L} \frac{1}{|\bar{\bold{x}}-\bold{x}|^{q}} \frac{|\omg(\bold{x})|}{r} \, \ud \bold{x} \le CL^{-q}\nrm{r^{-1}\omg}_{L^1} 
		\end{split}
	\end{equation*} since $q>0$ and in the former, we obtain \begin{equation*}
		\begin{split}
			\iiint_{|\bold{x}-\bar{\bold{x}}|\le L} \frac{1}{|\bar{\bold{x}}-\bold{x}|^{q}} \frac{|\omg(\bold{x})|}{r} \, \ud \bold{x} \le \nrm{r^{-1}\omg}_{L^{\infty}} \iiint_{|\bold{x}|\le L}|\bold{x}|^{-q}\, \ud \bold{x} \le C_q L^{3-q}\nrm{r^{-1}\omg}_{L^{\infty}} 
		\end{split}
	\end{equation*} since $q<3$. The choice \begin{equation*}
		\begin{split}
			L = \left(  \frac{ \nrm{r^{-1}\omg}_{L^{1} }}{ \nrm{r^{-1}\omg}_{L^{\infty}} }\right)^{\frac13}
		\end{split}
	\end{equation*} gives the desired estimate. 
\end{proof}

\medskip \noindent \textbf{The $t^2$ bound on the vorticity maximum}. 
We provide a proof that the vorticity maximum for general axisymmetric flows without swirl cannot grow faster than $t^2$ in time, assuming that the initial vorticity 
 { satisfies 
$$	
			 \sup\{ r : (r,z) \in \supp(\omg_0(\cdot)) \} 	
			<\infty $$
together with   \eqref{init_assum}.
}
  We need the following lemma  {from Feng--\v{S}ver\'{a}k \cite{FeSv}.}
\begin{lemma}[{{\cite[Propositions 2.11,  {2.13}]{FeSv}}}] \label{lem:FS}
	The velocity $u = K[\omg]$ satisfies \begin{equation*}
		\begin{split}
			\nrm{u}_{L^\infty} \le C\nrm{r\omg}_{L^1}^{\frac14}\nrm{r^{-1}\omg}_{L^1}^{\frac14}\nrm{r^{-1}\omg}_{L^\infty}^{\frac12}. 
		\end{split}
	\end{equation*}
	\medskip
\end{lemma}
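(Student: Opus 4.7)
The plan is to bound $|u(\bfx)|$ uniformly in $\bfx\in\bbR^3$ by combining the 3D Biot--Savart pointwise estimate
$|u(\bfx)| \le C\iiint_{\bbR^3}|\omg(\bfy)|/|\bfx-\bfy|^2\,\ud\bfy$
with a layer-cake decomposition
\[
|u(\bfx)| \le \frac{1}{2\pi}\int_0^\infty \frac{M(\rho)}{\rho^3}\,\ud\rho, \qquad M(\rho) := \int_{|\bfx-\bfy|\le\rho}|\omg(\bfy)|\,\ud\bfy,
\]
together with two basic bounds on the mass function: the local-volume bound $M(\rho)\le C\nrm{r^{-1}\omg}_{L^\infty}(r(\bfx)+\rho)\rho^3$, from the pointwise inequality $|\omg(\bfy)| \le r(\bfy)\nrm{r^{-1}\omg}_{L^\infty}$ and the Lipschitz estimate $r(\bfy) \le r(\bfx)+|\bfx-\bfy|$; and the global-mass bound $M(\rho) \le \nrm{r\omg}_{L^1}^{1/2}\nrm{r^{-1}\omg}_{L^1}^{1/2}$, via Cauchy--Schwarz applied to the pointwise identity $|\omg| = (r|\omg|)^{1/2}(r^{-1}|\omg|)^{1/2}$.

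For $\bfx$ on the symmetry axis (i.e., $r(\bfx)=0$), these two bounds already suffice. The crossover scale
\[
\rho_* := \bigl(\nrm{r\omg}_{L^1}^{1/2}\nrm{r^{-1}\omg}_{L^1}^{1/2}/\nrm{r^{-1}\omg}_{L^\infty}\bigr)^{1/4}
\]
balances the two contributions, and splitting the layer-cake integral at $\rho_*$ yields
\[
|u(\bfx)| \le C'\bigl(\nrm{r^{-1}\omg}_{L^\infty}\rho_*^2 + \nrm{r\omg}_{L^1}^{1/2}\nrm{r^{-1}\omg}_{L^1}^{1/2}\rho_*^{-2}\bigr) \le C''\nrm{r\omg}_{L^1}^{1/4}\nrm{r^{-1}\omg}_{L^1}^{1/4}\nrm{r^{-1}\omg}_{L^\infty}^{1/2},
\]
exactly the announced estimate, with the interpolation exponents emerging naturally from the balancing.

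The main obstacle is removing the spurious $r(\bfx)$-factor that appears in the volume bound when $r(\bfx)>0$, since a crude application of the same layer-cake argument would give a bound growing with $r(\bfx)$. I would address this by exploiting the angular cancellation built into the axisymmetric Biot--Savart law: after integrating out the $\theta$-variable, the velocity is represented in the $(r,z)$-plane by the elliptic-integral kernels of Section~\ref{sec:prelim} involving $\calF(\zeta^2), \calF'(\zeta^2)$ with $\zeta^2 = ((r-\bR)^2+(z-\bz)^2)/(r\bR)$, whose decay properties in $\zeta$ absorb the extra $r(\bfx)$-factor. The layer-cake argument can then be recast in the $(r,z)$-plane, invoking Lemma~\ref{lem:X-q} for the near-diagonal contribution and the $\nrm{r\omg}_{L^1}$-weight for the tail, to close the estimate uniformly in $\bfx$.
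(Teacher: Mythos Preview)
The paper itself does not prove this lemma; it is quoted directly from Feng--\v{S}ver\'{a}k \cite{FeSv}, Propositions~2.11 and~2.13, so there is no in-paper argument to compare against.

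Your layer-cake argument on the axis ($r(\bfx)=0$) is clean and correctly produces the exponents $\tfrac14,\tfrac14,\tfrac12$. The difficulty you flag for $r(\bfx)>0$ is genuine, and your instinct to pass to the $(r,z)$-plane representation and exploit the decay of $\calF,\calF'$ is the right one in spirit. But the final paragraph is only a sketch, and the specific recipe you propose does not obviously close: Lemma~\ref{lem:X-q} outputs bounds of the shape $\nrm{r^{-1}\omg}_{L^1}^{1-q/3}\nrm{r^{-1}\omg}_{L^\infty}^{q/3}$ with \emph{no} $\nrm{r\omg}_{L^1}$-dependence, so ``invoking Lemma~\ref{lem:X-q} for the near-diagonal contribution'' cannot by itself deliver the combination $\nrm{r\omg}_{L^1}^{1/4}\nrm{r^{-1}\omg}_{L^1}^{1/4}\nrm{r^{-1}\omg}_{L^\infty}^{1/2}$. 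You also do not specify which kernel bound makes the far-field piece controlled by a power of $\nrm{r\omg}_{L^1}$ with the correct decay in the splitting scale. What is actually needed is a family of pointwise bounds $|\calF'(s)|\lesssim s^{-\sigma}$ for a range of $\sigma$ (this is where the angular cancellation enters), followed by a near/far decomposition in the $(r,z)$-plane at a scale depending jointly on $r(\bfx)$ and the three norms, with different $\sigma$-choices on the two regions; the balancing that delivers the exponents $\tfrac14,\tfrac14,\tfrac12$ \emph{uniformly in $r(\bfx)$} is precisely the computation your outline omits. To make the proposal a proof you would need to carry this out explicitly.
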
 Recall our convention that the $L^p$ norms are taken always with respect to the Lebesgue measure on $\bbR^3$. Now to prove the claim, we define $R(t) = \sup\{ r : (r,z) \in \supp(\omg(t,\cdot)) \}$. We may assume that $R(t)$ is increasing in time by redefining it to be $\sup_{t'\le t} R(t')$. Then \begin{equation*}
		\begin{split}
			\frac{d}{dt} R(t) &\le \nrm{u^r}_{L^\infty} \le  C\nrm{r\omg}_{L^1}^{\frac14}\nrm{r^{-1}\omg}_{L^1}^{\frac14}\nrm{r^{-1}\omg}_{L^\infty}^{\frac12} \\
			& \le CR(t)^\frac12 \nrm{ r^{-1}\omg}_{L^1}^{\frac12} \nrm{r^{-1}\omg}_{L^\infty}^{\frac12} \\
			& = CR(t)^\frac12 \nrm{ r^{-1}\omg_0}_{L^1}^{\frac12} \nrm{r^{-1}\omg_0}_{L^\infty}^{\frac12} . 
		\end{split}
	\end{equation*} Integrating in time gives \begin{equation}\label{est_R_t_2}
	\begin{split}
		R(t) \le C(1+t)^2. 
	\end{split}
\end{equation} Recalling the Cauchy formula \eqref{Cauchy}, the $t^2$--bound on the support can be translated to a bound on $\nrm{\omg(t,\cdot)}_{L^\infty}$: \begin{equation*}
\begin{split}
	\nrm{\omg(t,\cdot)}_{L^\infty} \le R(t) \nrm{r^{-1}\omg_0}_{L^\infty} \le C(1+t)^2,
\end{split}
\end{equation*}  {where $C>0$ depends on the initial data $\omega_0$.}

\section{Key Inequalities} 

From now on, we shall impose the assumption \eqref{eq:vort-assumption} on $\omg$, so that the dynamics reduces to $\Pi_+ = \{ (r,z) : r,z \ge 0 \}$. 

\subsection{Relation between $P$ and vorticity $L^p$ norms} 
We define
\begin{equation*}\label{eq:P}
	\begin{split}
		P(t) := 			\iint_{\Pi_+} -r^2 \omg(t,r,z) \, \ud r \ud z  = \iiint_{\bbR^3_{+}} -r^2\xi(t,\cdot) \, \ud x   \ge 0. 
	\end{split}
\end{equation*}
{
\begin{lemma}\label{lem:P-ineq-Linfty2} Let $0\leq \delta<1$. Assume that $ \nrm{\xi_0^{-1} \mathbf{1}_{\{\xi_0<0\}} }_{L^{\frac{1-\delta}{1-((2-\delta)/p)}} (\bbR^3_+)}< \infty$ for some $p\in[2-\delta,\infty]$
and
$R_0 = \sup\{ r : (r,z) \in \supp(\omg_0(\cdot)) \}<\infty$. Then, we have  
	\begin{equation*}\label{eq:P-ineq-Linfty2}
		\begin{split}
			P(t) \le R(t)^\delta \nrm{\omg(t,\cdot)}_{L^p (\bbR^3_+)}^{2-\delta} \nrm{\xi_0^{-1} \mathbf{1}_{\{\xi_0<0\}} }^{1-\delta}_{L^{\frac{1-\delta}{1-((2-\delta)/p)}} (\bbR^3_+)},
		\end{split}
	\end{equation*} where $R(t)=  \sup\{ r : (r,z) \in \supp(\omg(t,\cdot)) \}$.
\end{lemma}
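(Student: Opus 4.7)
The plan is to write $P(t)$ as an integral of $r|\omega|$ on $\bbR^3_+$ and apply a three–factor Hölder inequality chosen so that one factor is $|\omega|$ to the power $2-\delta$, one factor is $r^\delta$ (which only lives on the support of $\omega(t,\cdot)$), and the third factor is a power of $|\xi|^{-1}$. The conservation of the distribution function of $\xi = r^{-1}\omega$ under the incompressible transport then allows the last factor to be frozen at its initial value.

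First I would observe that since $\omega(t,\cdot)\le 0$ on $\bbR^3_+$ by \eqref{eq:vort-assumption}, we have $-r^2\xi=r|\omega|$, hence
\begin{equation*}
P(t) = \iiint_{\bbR^3_+} r\,|\omega(t,\cdot)|\,\ud x.
\end{equation*}
On $\{\omega(t,\cdot)<0\}$, a direct computation of exponents shows
\begin{equation*}
r\,|\omega| \;=\; |\omega|^{2-\delta}\cdot r^{\delta}\cdot |\xi|^{-(1-\delta)}.
\end{equation*}

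Next I would apply Hölder's inequality with three exponents
\begin{equation*}
p_1 = \tfrac{p}{2-\delta}, \qquad p_2 = \infty, \qquad p_3 = \tfrac{1}{1-(2-\delta)/p},
\end{equation*}
whose reciprocals indeed sum to $1$ (the arithmetic is the only thing to check, and the boundary cases $p=2-\delta$ and $p=\infty$ are interpreted in the usual way, giving $p_3=\infty$ or $p_1=\infty$ respectively). This yields
\begin{equation*}
P(t) \le \nrm{\omega(t,\cdot)}_{L^p(\bbR^3_+)}^{2-\delta}\cdot \nrm{r^{\delta}\mathbf{1}_{\{\omega(t,\cdot)\neq 0\}}}_{L^\infty}\cdot \nrm{|\xi(t,\cdot)|^{-1}\mathbf{1}_{\{\xi(t,\cdot)<0\}}}_{L^{(1-\delta)/(1-(2-\delta)/p)}}^{1-\delta}.
\end{equation*}
Since $\omega(t,\cdot)$ is supported in $\{r\le R(t)\}$, the middle factor is bounded by $R(t)^\delta$.

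Finally I would invoke the transport equation $\rd_t\xi + u\cdot\nb\xi = 0$ with incompressible $u$: the distribution function of $\xi(t,\cdot)$ is preserved, and the sign is preserved along trajectories, so for every $q\in(0,\infty]$,
\begin{equation*}
\nrm{|\xi(t,\cdot)|^{-1}\mathbf{1}_{\{\xi(t,\cdot)<0\}}}_{L^{q}(\bbR^3_+)} = \nrm{\xi_0^{-1}\mathbf{1}_{\{\xi_0<0\}}}_{L^{q}(\bbR^3_+)}.
\end{equation*}
Substituting this into the Hölder bound gives exactly the claimed inequality.

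There is essentially no obstacle: the argument is a carefully split Hölder inequality plus the conservation of $L^q$–norms of the (negative part of the) relative vorticity $\xi$. The only genuine content is the choice of the three exponents so that $r^\delta$ lands in $L^\infty$ and can be absorbed into $R(t)^\delta$; this is the step that determines why $R(t)^\delta$ (rather than some integrated quantity in $r$) is the correct geometric factor appearing in the bound.
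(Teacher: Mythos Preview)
Your proof is correct and essentially identical to the paper's: both decompose $-r^2\xi = r^\delta\cdot |\xi|^{-(1-\delta)}\cdot |\omg|^{2-\delta}$, apply H\"older with the same three exponents, bound $r^\delta$ by $R(t)^\delta$ on the support, and invoke conservation of the distribution function of $\xi$ to replace $\xi(t,\cdot)$ by $\xi_0$ in the last factor. The only difference is cosmetic: you make the three H\"older exponents explicit, whereas the paper leaves them implicit in the inequality.
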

\begin{proof}
	We estimate using H\"older's inequality \begin{equation*}
		\begin{split}
			\iiint_{\bbR^3_{+}} -r^2\xi \, \ud x &= \iiint_{\bbR^3_{+}} -r^\delta\frac{r^{1-\delta}}{\omg^{1-\delta}} \omg^{2-\delta} \, \ud x \\& \le R(t)^{\delta}\nrm{\omg(t,\cdot)}_{L^p(\bbR^3_+)}^{2-\delta} \left( \iiint_{ \{ \xi(t,\cdot) < 0 \} } \xi(t,\cdot)^{-\frac{1-\delta}{1-((2-\delta)/p)}} \, \ud x \right)^{1-\frac{(2-\delta)}{p}}
			\\			&=  R(t)^{\delta}  \nrm{\omg(t,\cdot)}_{L^p(\bbR^3_+)}^{2-\delta}\nrm{\xi_0^{-1} \mathbf{1}_{\{\xi_0<0\}} }^{1-\delta}_{L^{\frac{1-\delta}{1-((2-\delta)/p)}} (\bbR^3_+)}.
		\end{split}
	\end{equation*} 
	\noindent In the last equality, we have used that the distribution function of $\xi(t,\cdot)$ is invariant in time. 
\end{proof}
}

It is worth considering the special case in which $\xi_{0} = -\lmb \mathbf{1}_{\Omg_0}$ on $\bbR^3_+$ for some $\lmb>0$ and a bounded open set $\Omg_0 \subset \Pi_+ \cap \{ r \ge \frac12 \}$. The associated solution takes the form \begin{equation*}\label{eq:xi-patch}
	\begin{split}
		\xi(t,\cdot) = -\lmb \mathbf{1}_{\Omg(t)}.
	\end{split}
\end{equation*} Denoting $R(t) := \sup\{ r : \mbox{there exists } (r,z) \in \Omg(t) \},$ we immediately obtain from $|\Omg(t)|=|\Omg_0|$ ($|\cdot|$ denotes the three-dimensional Lebesgue measure) that \begin{equation*}
\begin{split}
	P(t) \le \lmb |\Omg_0| R^2(t),
\end{split}
\end{equation*} and combining this with \begin{equation*}
\begin{split}
	\nrm{\omg(t,\cdot)}_{L^\infty} \ge  { \frac12\lmb R(t), }
\end{split}
\end{equation*} we arrive at the following precise result: \begin{equation*}\label{eq:P-patch}
		\begin{split}
			\nrm{\omg(t,\cdot)}_{L^{\infty}} \ge \frac12\left( |\Omg_0|^{-1} \nrm{\xi_0}_{L^\infty} P(t)  \right)^{\frac{1}{2}}. 
		\end{split}
	\end{equation*}  

\subsection{Estimates on kinetic energy and derivative of $P$}

In addition to  \begin{equation*}
	\begin{split}
		X(r,\bR,z,\bz,\tht) = r^2 + \bR^2 - 2r\bR \ct + (z-\bz)^2
	\end{split}
\end{equation*} which  has been defined in the above, we set \begin{equation*}
\begin{split}
			\bar{X}(r,\bR,z,\bz,\tht) &= r^2 + \bR^2 + 2r\bR \ct + (z-\bz)^2, \\
			Y(r,\bR,z,\bz,\tht) &= r^2 + \bR^2 - 2r\bR \ct + (z+\bz)^2, \\
			\bar{Y}(r,\bR,z,\bz,\tht) & = r^2 + \bR^2 + 2r\bR \ct + (z+\bz)^2. 
\end{split}
\end{equation*} 
For ease of notation, let us write $\omg = \omg(r,z)\,\ud r \ud z$ and $\bar\omg = \omg(\bR,\bz)\,\ud \bR \ud \bz$. 
\begin{lemma}\label{lem:E-P-bounds}
	The kinetic energy $E \ge 0 $ satisfies \begin{equation}\label{eq:E-bounds}
		\begin{split}
			\frac{E}{C}\le \iiiint_{(\Pi_+)^{2}}\left[ \int_0^{\frac{\pi}{2}} { \frac{(r \bR)^2 (z\bz) (\ct)^2}{X^\frac12 \bX Y }   } \, \ud\tht\right] \, \omg \bar{\omg} \, \le CE. 
		\end{split}
	\end{equation} Moreover, $\dot{P}:= \frac{d}{dt} P \ge 0 $ satisfies \begin{equation}\label{eq:P-prime-bounds}
	\begin{split}
		\frac{\dot{P}}{C} \le \iiiint_{(\Pi_+)^{2}}\left[ \int_0^{\frac{\pi}{2}}\frac{(r \bR)^2 (z+\bz) (\ct)^2}{Y^\frac32\bY }  \, \ud\tht\right] \, \omg \bar{\omg} \, \le C \dot{P}. 
	\end{split}
\end{equation} Here, $C>0$ is an absolute constant. 
\end{lemma}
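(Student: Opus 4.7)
\medskip

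\noindent\textbf{Plan for the proof of Lemma \ref{lem:E-P-bounds}.} The two bounds are derived by the same recipe: (i) exploit the assumption that $\omg$ is odd in $z$ to reduce every integral over $\Pi^2$ to one over $\Pi_+^2$, producing a \emph{difference} of kernels which depend on $(z-\bz)^2$ and $(z+\bz)^2$; (ii) use the symmetry $\tht\mapsto \pi-\tht$ on $[\pi/2,\pi]$ to replace each integrand by a second difference in the variables $X\leftrightarrow \bar X$ and $Y\leftrightarrow\bar Y$; (iii) make the differences explicit using the elementary identities $\bar X-X=\bar Y-Y=4r\bR\ct$ and $Y-X=\bar Y-\bar X=4z\bz$; (iv) compare the resulting algebraic expression with the proposed integrand by a two-sided elementary bound.

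\medskip

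\noindent\textbf{The bound on $E$.} I would start from the identity $(r\bR)^{1/2}\calF(\zt^2)=\int_0^\pi \frac{r\bR\ct}{X^{1/2}}\,\ud\tht$, which is immediate from the definition of $\calF$ and the identity $r\bR[2(1-\ct)+\zt^2]=X$. Partitioning $\Pi=\Pi_+\cup\Pi_-$ and using that $\omg$ is odd in $z$, the four sign combinations produce
\[
E=\iiiint_{(\Pi_+)^2}\int_0^\pi r\bR\ct\bigl[X^{-1/2}-Y^{-1/2}\bigr]\,\ud\tht\,\omg\bar\omg.
\]
The substitution $\tht\mapsto \pi-\tht$ on $[\pi/2,\pi]$ turns $X$ into $\bX$, $Y$ into $\bY$, and flips the sign of $\ct$, producing the \emph{second} difference
\[
E=\iiiint\int_0^{\pi/2} r\bR\ct\Bigl\{(X^{-1/2}-\bX^{-1/2})-(Y^{-1/2}-\bY^{-1/2})\Bigr\}\ud\tht\,\omg\bar\omg.
\]
Applying $a^{-1/2}-b^{-1/2}=(b-a)/\bigl[\sqrt{ab}(\sqrt a+\sqrt b)\bigr]$ with $b-a=4r\bR\ct$ in both differences, then factoring further using $Y-X=\bY-\bX=4z\bz$, yields
\[
E=16\iiiint\int_0^{\pi/2}\frac{(r\bR)^2 z\bz\,(\ct)^2 \, \mathcal{B}}{\mathcal{A}_X\mathcal{A}_Y}\ud\tht\,\omg\bar\omg,
\]
where $\mathcal{A}_X=\sqrt{X\bX}(\sqrt X+\sqrt{\bX})$, $\mathcal{A}_Y=\sqrt{Y\bY}(\sqrt Y+\sqrt{\bY})$, and
\[
\mathcal{B}=\sqrt{\bY}+\sqrt Y+\frac{X}{\sqrt{\bY}+\sqrt{\bX}}+\frac{\bX}{\sqrt Y+\sqrt X}.
\]
Since $X\le \bX$, $Y\le\bY$, $X\le Y$, $\bX\le\bY$, one checks $\mathcal{A}_X\simeq \sqrt X\,\bX$ and $\mathcal{A}_Y\simeq\sqrt Y\,\bY$. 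So it remains to show $\mathcal{B}\simeq \bY/\sqrt Y$, which is the only nontrivial step; then
\[
\frac{\mathcal{B}}{\mathcal{A}_X\mathcal{A}_Y}\simeq \frac{\bY/\sqrt Y}{\sqrt X\,\bX\,\sqrt Y\,\bY}=\frac{1}{\sqrt X\,\bX\, Y},
\]
which is precisely the claimed integrand. For the \emph{upper} estimate on $\mathcal{B}$, each of the four summands is bounded by $\bY/\sqrt Y$: e.g.\ $\sqrt{\bY}\le \bY/\sqrt Y$ because $Y\le\bY$, and $\bX/(\sqrt Y+\sqrt X)\le \bX/\sqrt Y\le \bY/\sqrt Y$. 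For the \emph{lower} estimate, the key observation is $\bX\ge 4r\bR\ct$ (since $X\ge 0$), which combined with $X\le Y$ gives $\bX/(\sqrt Y+\sqrt X)\ge 2r\bR\ct/\sqrt Y$; together with the trivial $\mathcal{B}\ge\sqrt Y$ and $\bY=Y+4r\bR\ct$ one obtains $\mathcal{B}\gtrsim \sqrt Y+r\bR\ct/\sqrt Y\simeq \bY/\sqrt Y$. I expect this two-sided estimate for $\mathcal{B}$, and the clean factorization that makes the algebraic cancellation $b-a=4z\bz\cdot\mathcal{B}$ transparent, to be the technical heart of the argument.

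\medskip

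\noindent\textbf{The bound on $\dot P$.} I would compute $\dot P=-2\iint_{\Pi_+} ru^r\omg\,\ud r\ud z$ using $\rd_t\omg=-\rd_r(u^r\omg)-\rd_z(u^z\omg)$, $\rd_r u^r+\rd_z u^z=-u^r/r$ and integration by parts (justified under the assumed decay of $\omg$). Inserting the axisymmetric Biot--Savart formula for $u^r$ and using odd-in-$z$ symmetry to integrate over $\Pi_+^2$, one obtains two pieces, with kernels $\calF'(\zt_-^2)(\bz-z)$ and $\calF'(\zt_+^2)(\bz+z)$ respectively; the former is antisymmetric under the exchange $(r,z)\leftrightarrow(\bR,\bz)$ against a symmetric kernel times $\omg\bar\omg$, hence vanishes. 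Using $\calF'(\zt_+^2)=-(r\bR)^{3/2}\int_0^\pi \ct/(2Y^{3/2})\,\ud\tht$, and then the symmetry $\tht\mapsto\pi-\tht$ once more, gives
\[
\dot P=\tfrac{1}{\pi}\iiiint_{(\Pi_+)^2} r\bR(z+\bz)\int_0^{\pi/2}\ct\bigl[Y^{-3/2}-\bY^{-3/2}\bigr]\ud\tht\,\omg\bar\omg.
\]
The concluding step is the elementary identity $\bY^{3/2}-Y^{3/2}=(\bY-Y)(\bY+\sqrt{Y\bY}+Y)/(\sqrt Y+\sqrt{\bY})$, whose right-hand side is $\simeq 4r\bR\ct\cdot \sqrt{\bY}$ (because $Y\le \bY$ makes the last two factors comparable to $\bY$ and $\sqrt{\bY}$ respectively), yielding $Y^{-3/2}-\bY^{-3/2}\simeq r\bR\ct/(Y^{3/2}\bY)$, and hence \eqref{eq:P-prime-bounds}. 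This half is noticeably easier than the $E$ case since only one elementary difference needs to be estimated.
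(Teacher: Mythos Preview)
Your proof is correct and follows essentially the same strategy as the paper: symmetrize in $\tht$ and in $z$ to produce a double difference, factor using $\bar X-X=\bar Y-Y=4r\bR\ct$ and $Y-X=\bar Y-\bar X=4z\bz$, and compare with the target kernel via the chain of inequalities $X\le\bar X,\,X\le Y,\,Y\le\bar Y,\,\bar X\le\bar Y$. The only noteworthy difference is organizational: for the $E$ bound the paper adds and subtracts to split $\frac{1}{\calA_X}-\frac{1}{\calA_Y}$ into pieces $I+II$ (using the identity $Y\bar Y-X\bar X=2z\bz(X+\bar X+Y+\bar Y)$) and estimates each separately, whereas your single factorization $\calA_Y-\calA_X=4z\bz\cdot\calB$ followed by the two-sided bound $\calB\simeq\bar Y/\sqrt{Y}$ is a cleaner packaging of the same algebra.
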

 
\begin{proof}
	We begin with the expression \begin{equation*}
		\begin{split}
			E & =  {\frac{1}{2}} \iiiint_{\Pi^{2}}   \left[\int_0^{\pi} \frac{r\bar{r}\ct}{X^{\frac12}} \, \ud\tht  \right] \omg \bar{\omg} =   {\frac{1}{2}} \iiiint_{\Pi^{2}}   \left[\int_0^{\frac{\pi}{2}} r\bar{r}\ct\left(\frac{1}{X^{\frac12}} -\frac{1}{\bX^{\frac12}} \right)\, \ud\tht  \right] \omg \bar{\omg} \\
			& =  {2}\iiiint_{\Pi^{2}}   \left[\int_0^{\frac{\pi}{2}}  \frac{(r\bar{r}\ct)^2}{X^{\frac12}\bX^{\frac12}(X^{\frac12}+\bX^{\frac12})}  \, \ud\tht  \right] \omg \bar{\omg} .
		\end{split}
	\end{equation*} Next, using odd symmetry of $\omg$ and $\bar{\omg}$ in $z$, \begin{equation*}
	\begin{split}
		E = {4}\iiiint_{(\Pi_+)^{2}}   \left[\int_0^{\frac{\pi}{2}} (r\bar{r}\ct)^2  \left(\frac{1}{X^{\frac12}\bX^{\frac12}(X^{\frac12}+\bX^{\frac12})} - \frac{1}{Y^{\frac12}\bY^{\frac12}(Y^{\frac12}+\bY^{\frac12})} \right) \, \ud\tht  \right] \omg \bar{\omg} .
	\end{split}
\end{equation*} We compute \begin{equation*}
\begin{split}
	&\frac{1}{X^{\frac12}\bX^{\frac12}(X^{\frac12}+\bX^{\frac12})} - \frac{1}{Y^{\frac12}\bY^{\frac12}(Y^{\frac12}+\bY^{\frac12})}  \\
	&\qquad = \frac{ (Y\bY)^{\frac12} (Y^{\frac12}+\bY^{\frac12} - X^{\frac12}-\bX^{\frac12} ) + ( (Y\bY)^{\frac12} -(X\bX)^{\frac12} )( X^{\frac12}+\bX^{\frac12} ) }{X^{\frac12}\bX^{\frac12}(X^{\frac12}+\bX^{\frac12})Y^{\frac12}\bY^{\frac12}(Y^{\frac12}+\bY^{\frac12})} =: I + II. 
\end{split}
\end{equation*} We consider \begin{equation*}
\begin{split}
	II = \frac{(Y\bY)^{\frac12} -(X\bX)^{\frac12}  }{X^{\frac12}\bX^{\frac12} Y^{\frac12}\bY^{\frac12}(Y^{\frac12}+\bY^{\frac12})} = \frac{Y\bY -X\bX }{X^{\frac12}\bX^{\frac12} Y^{\frac12}\bY^{\frac12}(Y^{\frac12}+\bY^{\frac12})((Y\bY)^{\frac12} +(X\bX)^{\frac12} )}.
\end{split}
\end{equation*} Noting that \begin{equation*}
\begin{split}
	Y\bY -X\bX = 2z\bz ( X + \bX + Y + \bY)
\end{split}
\end{equation*} and \begin{equation*}
\begin{split}
	X \le \bX, \qquad Y \le \bY, \qquad X \le Y, \qquad \bX \le \bY, 
\end{split}
\end{equation*} we observe that \begin{equation*}
\begin{split}
	Y\bY -X\bX  \sim z\bz\bY, \qquad X^{\frac12}\bX^{\frac12} Y^{\frac12}\bY^{\frac12}(Y^{\frac12}+\bY^{\frac12})((Y\bY)^{\frac12} +(X\bX)^{\frac12} ) \sim X^{\frac12}\bX^{\frac12} Y^{\frac12}\bY^{\frac12}\bY^{\frac12}(Y\bY)^{\frac12}.
\end{split}
\end{equation*} Here, we write $A \sim B$ if there is an absolute constant $C>0$ such that $C^{-1}A \le B \le CA$. This gives \begin{equation*}
\begin{split}
	II \sim \frac{z\bz\bY}{ X^{\frac12}\bX^{\frac12} Y^{\frac12}\bY^{\frac12}\bY^{\frac12}(Y\bY)^{\frac12} } \sim  \frac{z\bz }{ X^{\frac12}\bX^{\frac12} Y^{\frac12}(Y\bY)^{\frac12} }.
\end{split}
\end{equation*} Next, \begin{equation*}
\begin{split}
	I = \frac{ Y^{\frac12}+\bY^{\frac12} - X^{\frac12}-\bX^{\frac12} }{X^{\frac12}\bX^{\frac12}(X^{\frac12}+\bX^{\frac12})(Y^{\frac12}+\bY^{\frac12}) } =: I_1+I_2
\end{split}
\end{equation*} with \begin{equation*}
\begin{split}
	I_1 =  \frac{ Y^{\frac12} - X^{\frac12}  }{X^{\frac12}\bX^{\frac12}(X^{\frac12}+\bX^{\frac12})(Y^{\frac12}+\bY^{\frac12}) } = \frac{4z\bz }{X^{\frac12}\bX^{\frac12}(X^{\frac12}+\bX^{\frac12})(Y^{\frac12}+\bY^{\frac12})( Y^{\frac12} + X^{\frac12} ) }  \sim  \frac{z\bz }{X^{\frac12}\bX^{\frac12}\bX^{\frac12}Y^{\frac12}\bY^{\frac12}   } 
\end{split}
\end{equation*} Similarly, \begin{equation*}
\begin{split}
	I_2 \sim  \frac{z\bz }{X^{\frac12}\bX^{\frac12}\bX^{\frac12}\bY^{\frac12}\bY^{\frac12}   } \lesssim I_1. 
\end{split}
\end{equation*} Since \begin{equation*}
\begin{split}
	 \frac{z\bz }{X^{\frac12}\bX^{\frac12}\bX^{\frac12}Y^{\frac12}\bY^{\frac12}   }  +  \frac{z\bz }{ X^{\frac12}\bX^{\frac12} Y^{\frac12}(Y\bY)^{\frac12} } \sim \frac{z\bz}{X^\frac12\bX Y},
\end{split}
\end{equation*} we obtain \eqref{eq:E-bounds}.

\medskip

\noindent Next, we compute \begin{equation*}
	\begin{split}
		\dot{P} & = -\iint_{\Pi_+} 2r u^r (r,z) \omg (r,z) \, \ud r \ud z \\
		& = -\frac{1}{\pi}\iint_{\Pi} \iint_{\Pi_+}   \left[ \int_0^\pi \frac{(z-\bar{z})r \bar{r}\ct}{X^{\frac{3}{2}}} \ud \tht \right]  \omg \bar{\omg} \\
		& = \frac{1}{\pi} \iiiint_{(\Pi_+)^2}  \left[ \int_0^{\pi}  \frac{(z+\bz)r\bR\ct}{Y^\frac32} \, \ud \tht \right] \omg \bar{\omg} \\
		&  = \frac{4}{\pi} \iiiint_{(\Pi_+)^2}  \left[ \int_0^{\frac{\pi}{2}}  (z+\bz)(r\bR)^2 (\ct)^2 \frac{  \bY^2 + \bY Y+Y^2 }{(Y\bY)^\frac32(Y^\frac32+\bY^\frac32)}  \, \ud \tht \right] \omg \bar{\omg}. 
	\end{split}
\end{equation*} Then, we simply note that 
\begin{equation*}
	\begin{split}
		\bY^2 + \bY Y+Y^2 \sim \bY^2, \qquad (Y\bY)^\frac32(Y^\frac32+\bY^\frac32) \sim Y^{\frac32} \bY^3
	\end{split}
\end{equation*} to derive \begin{equation*}
	\begin{split}
			\dot{P} \sim \iiiint_{(\Pi_+)^2}  \left[ \int_0^{\frac{\pi}{2}}  \frac{ (z+\bz)(r\bR)^2 (\ct)^2 }{ Y^{\frac32}\bY }  \, \ud \tht \right] \omg \bar{\omg}. 
	\end{split}
\end{equation*} This finishes the proof. 
\end{proof}

\subsection{Monotonicity Lemma}
For simplicity, we shall denote \begin{equation*}
	\begin{split}
		Z(t) :=  \iint_{\Pi_+}  -z \omg (t,r,z) \, \ud r \ud z . 
	\end{split}
\end{equation*} We have already seen that \begin{equation*}
\begin{split}
	 P(t) = \iint_{\Pi_+} - r^2 \omg(t,r,z) \, \ud r \ud z 
\end{split}
\end{equation*} is monotone increasing with $t$. In the meanwhile, we note that $0\le Z, P < +\infty$ for $t>0$ if they are finite at $t = 0$. 
\begin{lemma}\label{lem:z-down}
	We have \begin{equation}\label{eq:z-down}
		\begin{split}
			\dot{Z} := \frac{d}{dt} \iint_{\Pi_+}  -z \omg (t,r,z) \, \ud r \ud z  {< 0.}
		\end{split}
	\end{equation} 
\end{lemma}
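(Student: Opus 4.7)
The natural first move is to differentiate $Z(t)$ under the integral sign and insert the vorticity equation \eqref{eq:Euler-axisym-no-swirl}. Expanding $\partial_t\omg = -u\cdot\nabla\omg + u^r\omg/r$ and integrating by parts in $r$ and $z$ (the boundary contributions all vanish, thanks to $u^r|_{r=0}=0$, the explicit prefactor $z$ at $z=0$, and decay at infinity), the axisymmetric incompressibility identity $\partial_r u^r + \partial_z u^z = -u^r/r$ makes the $zu^r\omg/r$ pieces cancel exactly. This should leave the clean identity
\begin{equation*}
\dot Z(t) = -\iint_{\Pi_+}\omg(r,z)\, u^z(r,z)\,\ud r\,\ud z = \iint_{\Pi_+}(-\omg(r,z))\, u^z(r,z)\,\ud r\,\ud z.
\end{equation*}
Since $-\omg \ge 0$ on $\Pi_+$, the claim becomes an averaged (rather than pointwise) negativity statement for $u^z$.

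To extract the sign I would plug in the Biot--Savart formula $u^z = r^{-1}\partial_r\psi$, use the odd extension of $\omg$ in $z$ to produce the expected image contribution, and symmetrize in $(r,z)\leftrightarrow(\bR,\bz)$. With $X = r^2+\bR^2 - 2r\bR\cos\theta + (z-\bz)^2$ and $Y = r^2+\bR^2 - 2r\bR\cos\theta + (z+\bz)^2$, this should yield
\begin{equation*}
\dot Z = -\frac{1}{4\pi}\iiiint_{(\Pi_+)^2}\left[\int_0^\pi \cos\theta\,\calK\,\ud\theta\right]\omg(r,z)\omg(\bR, \bz)\,\ud r\,\ud z\,\ud \bR\,\ud \bz,
\end{equation*}
with the symmetrized angular kernel
\begin{equation*}
\calK = \frac{r^2+\bR^2}{r\bR}\left[\frac{1}{X^{1/2}} - \frac{1}{Y^{1/2}}\right] + \bigl[(r^2+\bR^2)\cos\theta - 2r\bR\bigr]\left[\frac{1}{X^{3/2}} - \frac{1}{Y^{3/2}}\right].
\end{equation*}
Because $\omg(r,z)\omg(\bR,\bz) \ge 0$ on $(\Pi_+)^2$, it then suffices to prove that the inner $\theta$-integral is strictly positive whenever $z,\bz > 0$.

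Evaluating the $\theta$-integration is where the elliptic integral formulation of the Biot--Savart law becomes essential. Using $\int_0^\pi \cos\theta\, X^{-1/2}\,\ud\theta = (r\bR)^{-1/2}\calF(\zt_X^2)$ and $\int_0^\pi \cos\theta\, X^{-3/2}\,\ud\theta = -2(r\bR)^{-3/2}\calF'(\zt_X^2)$, together with the identity $\int_0^\pi \cos^2\theta\, X^{-3/2}\,\ud\theta = -(r\bR)^{-3/2}[(\zt_X^2+2)\calF'(\zt_X^2) + \tfrac12\calF(\zt_X^2)]$ (derived from the previous two by writing $\cos\theta = (c_X - X)/(2r\bR)$ with $c_X = r\bR(\zt_X^2+2)$), plus the analogous identities with $Y$, the $\theta$-integral should collapse to
\begin{equation*}
\int_0^\pi \cos\theta\,\calK\,\ud\theta = \frac{1}{(r\bR)^{3/2}}\left[\tfrac{r^2+\bR^2}{2}\,A \,-\, (r^2+\bR^2)\,C \,-\, 2(r-\bR)^2\,B\right],
\end{equation*}
where $A := \calF(\zt_X^2) - \calF(\zt_Y^2)$, $B := \calF'(\zt_X^2) - \calF'(\zt_Y^2)$, and $C := \zt_X^2\calF'(\zt_X^2) - \zt_Y^2\calF'(\zt_Y^2)$. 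For $z,\bz>0$ we have $\zt_X^2 < \zt_Y^2$, so $A>0$ since $\calF'<0$ and $-B>0$ since $\calF''>0$ come for free.

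The remaining sign $C<0$ is the heart of the lemma: it is equivalent to strict monotonicity of $g(s) := s\calF'(s)$. A direct computation from \eqref{eq:DF}--\eqref{eq:D2F} together with the substitution $\alpha = 4/(4+s)$ reduces $g'(s)>0$ to the elliptic-integral inequality $(\alpha+2)\calE_K(\alpha) > 2(\alpha+1)\calE_E(\alpha)$ on $\alpha\in(0,1)$. The left-minus-right side vanishes at $\alpha=0$ and, after an application of Legendre's derivative formulas for $\calE_K', \calE_E'$, has derivative proportional to $J(\alpha) := (1-\alpha)\calE_K(\alpha) + (2\alpha-1)\calE_E(\alpha)$; in turn $J>0$ on $(0,1]$ can be pinned down from $J(0)=0$, $J(1)=1$, and the explicit formula $J'(\alpha) = \tfrac{3}{2}[2\calE_E(\alpha) - \calE_K(\alpha)]$, which is positive near $\alpha=0$. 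I expect this two-step chain of elliptic-integral inequalities to be the main technical hurdle. Once $g$ is known to be strictly monotone, every bracketed term is strictly positive on $\{z,\bz>0\}$, and nontriviality of $\omg$ on $\Pi_+$ forces $\dot Z < 0$ strictly.
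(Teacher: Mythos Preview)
Your derivation of the symmetrized kernel and its reduction to differences of $\calF$, $\calF'$ at $\zt_X^2<\zt_Y^2$ is exactly the route the paper takes; the only substantive difference is in how you group the terms at the end. The paper writes the symmetrized kernel directly in the form
\[
\calH[r,\bR,z,\bz]=-\frac{(r-\bR)^2}{2\pi (r\bR)^{3/2}}\,\calF'(\zt^2)+\frac{r^2+\bR^2}{8\pi (r\bR)^{3/2}}\bigl[\calF(\zt^2)-2\zt^2\calF'(\zt^2)\bigr],
\]
and then reduces $\calH[r,\bR,z,\bz]-\calH[r,\bR,z,-\bz]>0$ to the two monotonicity statements ``$-\calF'(s)$ is strictly decreasing'' and ``$\calF(s)-2s\calF'(s)$ is strictly decreasing''. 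In your notation the second of these is precisely $\tfrac12 A - C>0$, obtained by \emph{combining} $A$ and $C$ (which, as you already observed, carry the same coefficient $r^2+\bR^2$). The paper then checks $\calF''>0$ from the integral representation, and verifies $\calF'+2s\calF''>0$ in one line from \eqref{eq:DF}--\eqref{eq:D2F}:
\[
\calF'(s)+2s\calF''(s)=\frac{1}{s(4+s)^{3/2}}\Bigl[4\calE_E\Bigl(\tfrac{4}{4+s}\Bigr)+s\bigl(\calE_K-\calE_E\bigr)\Bigl(\tfrac{4}{4+s}\Bigr)\Bigr]>0,
\]
using only $\calE_E>0$ and $\calE_K>\calE_E$.

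Your choice to sign $A$, $B$, $C$ separately is strictly more demanding: requiring $C<0$ amounts to $(s\calF')'=\calF'+s\calF''>0$, which (since $\calF''>0$) is a genuinely \emph{stronger} inequality than the paper's $\calF'+2s\calF''>0$. The verification you sketch via $J(\alp)=(1-\alp)\calE_K+(2\alp-1)\calE_E$ is incomplete as written: knowing $J(0)=0$, $J(1)=1$, and $J'(0^+)>0$ does not by itself rule out a zero of $J$ on $(0,1)$ without further control on the sign changes of $2\calE_E-\calE_K$. This is not a fatal obstruction (the inequality $(s\calF')'>0$ is in fact true), but the gap is entirely self-inflicted. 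If you simply replace the pair of conditions ``$A>0$ and $C<0$'' by the single condition ``$\tfrac12 A-C>0$'', the proof closes immediately with nothing beyond $\calE_K>\calE_E>0$, exactly as in the paper.
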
  

\begin{proof}[Proof of Lemma \ref{lem:z-down}]
	We begin with \begin{equation*}
		\begin{split}
			\frac{d}{dt} \iint_{\Pi_+} z \omg  \, \ud r \ud z = \iint_{\Pi_+}  u^z\omg \, \ud r \ud z . 
		\end{split}
	\end{equation*}
 Symmetrizing the kernel of $u^z$ in $(r,z)\leftrightarrow(\bR,\bz)$, we obtain with \begin{equation*}
\begin{split}
	 \calH[r,\bR,z,\bz]:=-\frac{(r-\bR)^2}{2\pi r^\frac32 \bR^\frac32} \calF'(\zeta^2) + \frac{\bR^2+r^2}{ {8}\pi r^\frac32\bR^\frac32}\left[ \calF(\zeta^2)-2\zeta^2 \calF'(\zeta^2) \right] 
\end{split}
\end{equation*} that \begin{equation*}
\begin{split}
	\iint_{\Pi_+}  u^z\omg \, \ud r \ud z  = \iint_{\Pi_+} \iint_{\Pi_+} \left[\calH[r,\bR,z,\bz] - \calH[r,\bR,z,-\bz]\right] \omega(r,z)\omega(\bR,\bz) \,  \ud r \ud z  \ud \bR \ud \bz .
\end{split}
\end{equation*} We need to show that $\calH[r,\bR,z,\bz] - \calH[r,\bR,z,-\bz] {>0}$, but since the dependence of $\calH$ in $z, \bz$ is only through $(z-\bz)^2$ in $\zeta^2$, it suffices to show that $-\calF'(s)$ and $\calF(s)-2s\calF'(s)$ are  {strictly} \textit{decreasing} functions of $s$ for all $s\in\bbR_+$. 

\medskip

\noindent (i) $\calF'(s)$. We need to check  {$\calF''>0$.} However, this is clear from the integral representation in \eqref{eq:D2F} after symmetrizing $\tht$ with $\pi-\tht$. 

\medskip

\noindent (ii) $\calF(s)-2s\calF'(s)$. We need to prove that \begin{equation*}
	\begin{split}
		(\calF(s)-2s\calF'(s))' = - \calF'(s) - 2s\calF''(s)  {< 0.} 
	\end{split}
\end{equation*} Using \eqref{eq:DF} and \eqref{eq:D2F}, we explicitly check that \begin{equation*}
\begin{split}
	 \calF'(s) + 2s\calF''(s) = \frac{1}{s(4+s)^\frac32} \left(  s \calE_K\left( \frac{4}{4+s}  \right)+ (4-s)\calE_E\left( \frac{4}{4+s}  \right) \right)
\end{split}
\end{equation*} but \begin{equation*}
\begin{split}
	 s \calE_K\left( \frac{4}{4+s}  \right)+ (4-s)\calE_E\left( \frac{4}{4+s}  \right)= 4\calE_E\left( \frac{4}{4+s}  \right) + s\left(  \calE_K\left( \frac{4}{4+s}  \right)- \calE_E\left( \frac{4}{4+s}  \right) \right)  {> 0} 
\end{split}
\end{equation*} since $\calE_E {>0}$ and  {$\calE_K>\calE_E$.} This finishes the proof of \eqref{eq:z-down}. 
\end{proof} 

\section{Infinite vortex stretching}\label{sec_stretching}

\subsection{Preliminary growth}

We are now in a position to state and prove the first key proposition. 
\begin{proposition}\label{prop:key-1} We have the following lower bound for all $t\ge 0$: 
	\begin{equation*}\label{eq:P-lb}
		\begin{split}
				P(t) \ge \left(  	C_q(E_0 X_0^{-1})^{\frac{q}{q-1}} t + 	P_0^{\frac{q}{q-1}}      \right)^{\frac{q-1}{q}}, \qquad 1 < q < \frac{15}{13},
		\end{split}
	\end{equation*} where $C_q>0$ is a constant depending only on $q$, $E_0$ is the kinetic energy, and \begin{equation*}
	\begin{split}
		X_0 = \left( \nrm{\xi_0}_{L^1}^{4-\frac{10}{3}q}   \nrm{\xi_0}_{L^\infty}^{\frac{q}{3}}Z_0^{4(q-1)}P_0^{1-q}  +  \nrm{\xi_0}_{L^1}^{3-\frac{7}{3}q} \nrm{\xi_0}_{L^\infty}^{\frac{q}{3}} Z_0^{2(q-1)} \right)^{\frac{1}{q}}
	\end{split}
\end{equation*} is a constant depending only on the initial data. 
\end{proposition}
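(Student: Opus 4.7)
The approach is to establish the key integrated inequality
\begin{equation*}
E \le C\,X_0\,\dot P^{(q-1)/q}\,P^{1/q}, \qquad 1 < q < 15/13,
\end{equation*}
and then integrate in time. Since $E = E_0$ is conserved and $\dot P\,P^{1/(q-1)} = \frac{q-1}{q}\frac{d}{dt}P^{q/(q-1)}$, this rearranges to $\frac{d}{dt}P^{q/(q-1)} \ge \frac{q}{q-1}\bigl(E_0/(CX_0)\bigr)^{q/(q-1)}$. Integrating from $0$ to $t$ and extracting the $(q-1)/q$-th root gives the proposition, with $C_q = \frac{q}{q-1}C^{-q/(q-1)}$. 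The appearance of $P_0^{1-q}$ and $Z_0^{4(q-1)}$ etc.\ in $X_0$ (rather than $P(t)^{1-q},Z(t)^{4(q-1)}$) is accommodated by monotonicity: $P(t)\ge P_0$ so $P(t)^{1-q}\le P_0^{1-q}$ for $q>1$, and $Z(t)\le Z_0$ by Lemma \ref{lem:z-down}.

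To obtain the key inequality, I would begin from the explicit integral formulas in Lemma \ref{lem:E-P-bounds} for the kernels $K_E$ and $K_P$ of $E$ and $\dot P$, and factor $K_E = K_P^{(q-1)/q}\,Q$ where
\begin{equation*}
Q = \frac{(r\bR)^{2/q}\,z\bz\,(\ct)^{2/q}\,Y^{(q-3)/(2q)}\,\bY^{(q-1)/q}}{X^{1/2}\,\bX\,(z+\bz)^{(q-1)/q}}.
\end{equation*}
Splitting $\omg\,\bar\omg = (\omg\,\bar\omg)^{(q-1)/q}(\omg\,\bar\omg)^{1/q}$ and applying H\"older with conjugate exponents $q/(q-1)$ and $q$ on the fivefold integration over $(\Pi_+)^2\times[0,\pi/2]$ yields
\begin{equation*}
E \le C\,\dot P^{(q-1)/q}\Bigl(\iiiint_{(\Pi_+)^2}\int_0^{\pi/2} Q^q\,\omg\,\bar\omg\Bigr)^{1/q}.
\end{equation*}
It remains to show $\iiiint\int Q^q\,\omg\,\bar\omg \lesssim X_0^q\,P(t)$.

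The main obstacle is this last estimate. I would majorize $Q^q$ pointwise using $z\bz\le(z+\bz)^2/4$, $Y\ge(z+\bz)^2$ (so $Y^{(q-3)/2}\le(z+\bz)^{q-3}$ since $q<3$), $\bY\sim(r+\bR)^2+(z+\bz)^2$ on $[0,\pi/2]$ together with $\bY^{q-1}\lesssim(r\vee\bR)^{2(q-1)}+(z+\bz)^{2(q-1)}$, and $\bX\gtrsim r\bR$, to reduce $Q^q$ to a sum of two pieces of the form $(r\bR)^{2-q}$ times either $(r\vee\bR)^{2(q-1)}(z+\bz)^{2q-2}$ or $(z+\bz)^{4q-4}$, multiplied by $(\ct)^2 X^{-q/2}$. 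For each piece I would apply Lemma \ref{lem:X-q} to $\int_0^{\pi/2} X^{-q/2}\,d\theta$ paired with one copy of $|\omg|$ (producing $\nrm{\xi_0}_{L^1}^{1-q/3}\nrm{\xi_0}_{L^\infty}^{q/3}$) and then control the remaining $(\bR,\bz)$-moments of $|\bar\omg|$ by interpolation among the conserved quantity $\iint|\bar\omg|\,d\bR d\bz$ and the first moments $P$, $Z$. This interpolation produces the factors $Z^{4(q-1)}P^{1-q}$ and $Z^{2(q-1)}$ appearing in the two summands of $X_0^q$, while one leftover $\iint\bR^2|\bar\omg|$ moment from the first piece is left as $P(t)$ on the outside. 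The threshold $q < 15/13$ is precisely what keeps the exponents $4-10q/3$ and $3-7q/3$ of $\nrm{\xi_0}_{L^1}$ in $X_0^q$ nonnegative and ensures the moment interpolations balance to yield exactly $X_0^q\,P(t)$.
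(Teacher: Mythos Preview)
Your overall strategy matches the paper's: apply H\"older between the kernels of $E$ and $\dot P$ from Lemma~\ref{lem:E-P-bounds} to obtain $E\lesssim(\dot P)^{(q-1)/q}J^{1/q}$, then bound $J\lesssim X_0^q\,P$, and integrate. The paper first simplifies the $E$-kernel via $z\bz/Y\lesssim(z+\bz)/Y^{1/2}$ before H\"older, but your version (imposing $z\bz\le(z+\bz)^2/4$ and $Y\ge(z+\bz)^2$ afterward) is equivalent, and your split $\bY^{q-1}\lesssim(r\vee\bR)^{2(q-1)}+(z+\bz)^{2(q-1)}$ is essentially the paper's $\bY^{q-1}\lesssim\bX^{q-1}+Y^{q-1}$.

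The gap is in how you estimate the remainder $J$. First, the symmetric bound $\bX\gtrsim r\bR$ yielding $(r\bR)^{2-q}$ is too weak; the paper instead uses $\bX\ge\bR^2$ and $\bX\ge r^2$ sequentially to reduce $(r\bR)^2/\bX^q$ to $r^{4-2q}$, eliminating $\bR$ entirely and allowing clean separation of variables. With only $(r\bR)^{2-q}$ you cannot ``apply Lemma~\ref{lem:X-q} with exponent $q$ paired with one copy of $|\omg|$'' as you describe, because the factors $r^{2-q}$, $(r\vee\bR)^{2(q-1)}$, $(z+\bz)^{\ldots}$ still couple $(r,z)$ to $(\bR,\bz)$ and cannot be pulled outside the $drdz\,d\theta$-integral. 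The paper's key device is a preliminary H\"older \emph{inside} the inner integral to separate the $\bz$-power from $X^{-q/2}$, after which Lemma~\ref{lem:X-q} is applied not with exponent $q$ but with the larger exponents $q/(3-2q)$ (for $J_1$) and $q/(5-4q)$ (for $J_2$). Second, your explanation of the threshold is wrong: the exponents $4-\tfrac{10}{3}q$ and $3-\tfrac{7}{3}q$ vanish at $q=6/5$ and $q=9/7$, not at $15/13$; both remain strictly positive there. The constraint $q<15/13$ comes precisely from requiring $q/(5-4q)<3$ in the Lemma~\ref{lem:X-q} application for $J_2$.
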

 Note that the statements of Theorem \ref{thm:main} follows from the above proposition. 
   {Then, Corollary \ref{cor:main2}   follows from Lemma \ref{lem:P-ineq-Linfty2} thanks to $t^2$--bound \eqref{est_R_t_2} on support.}

\begin{proof} To begin with, we recall the bounds 
\eqref{eq:E-bounds}, \eqref{eq:P-prime-bounds}
on $E$ and $P$ from Lemma \ref{lem:E-P-bounds}. Then, we estimate using H\"older's inequality \begin{equation}\label{en_est}
		\begin{split}
			E &\lesssim \iiiint_{(\Pi_+)^{2}} \int_0^{\frac{\pi}{2}} \frac{(r \bR)^2 (z\bz) (\ct)^2}{ X^\frac12 \bX Y } \, \ud\tht \,  \omg \bar{\omg} \,  \\
			&\lesssim \iiiint_{(\Pi_+)^2} \int_0^{\frac{\pi}{2}} \frac{(r \bR)^2 (z+\bz) (\ct)^2}{ X^{\frac12} \bX Y^{\frac12}  }  \, \ud\tht \,   \omg \bar{\omg} \\
			&\lesssim (\dot{P})^{1-\frac{1}{q}} J^{\frac{1}{q}}
		\end{split}
	\end{equation} for some $1<q \le \frac32$ to be determined below, where $J$ is given by  \begin{equation*}
		\begin{split}
			J &= \iiiint_{(\Pi_+)^2} \int_0^{\frac{\pi}{2}} \frac{(r \bR)^2 (z+\bz) \bY^{q-1} (\ct)^2}{(X\bX^2)^{\frac{q}{2}} Y^{\frac{3}{2}-q}    } \, \ud\tht \, \omg \bar{\omg} \\
			& = \iiiint_{(\Pi_+)^2} \int_0^{\frac{\pi}{2}} \frac{2(r \bR)^2 \bz \bY^{q-1} (\ct)^2}{(X\bX^2)^{\frac{q}{2}} Y^{\frac{3}{2}-q}    } \, \ud\tht \, \omg \bar{\omg}.
		\end{split}
	\end{equation*} We have used the lower bound of $P$ from Lemma \ref{lem:E-P-bounds}. We now estimate 
\begin{equation}\label{jj1j2}
J \le C( J_1 + J_2 )
\end{equation}	
	 using \begin{equation*}
		\begin{split}
			\bY^{q-1} \le C(\bX^{q-1} + Y^{q-1}) 
		\end{split}
	\end{equation*} where \begin{equation}\label{j1j2}
\begin{split}
	J_1 = \iiiint_{(\Pi_+)^2} \int_0^{\frac{\pi}{2}} \frac{ (r \bR)^2 \bz \bX^{q-1} (\ct)^2}{(X\bX^2)^{\frac{q}{2}} Y^{\frac{3}{2}-q}    } \, \ud\tht \, \omg \bar{\omg},  \qquad J_2= \iiiint_{(\Pi_+)^2} \int_0^{\frac{\pi}{2}} \frac{ (r \bR)^2 \bz Y^{q-1} (\ct)^2}{(X\bX^2)^{\frac{q}{2}} Y^{\frac{3}{2}-q}    } \, \ud\tht \, \omg \bar{\omg}.
\end{split}
\end{equation}

\medskip

\noindent \textit{Estimate of $J_1$}. We begin with rewriting it as \begin{equation*}
\begin{split}
	J_1 = 2 \iint_{\Pi_+} r^2 |\omg(r,z)| Q_1(r,z) \, \ud r \ud z ,  
\end{split}
\end{equation*} where \begin{equation*}
\begin{split}
	Q_1(r,z) & :=  \iint_{\Pi_+}\int_0^{\frac{\pi}{2}} \frac{ \bR^2 \bz  (\ct)^2}{ X^{\frac{q}{2}}\bX Y^{\frac{3}{2}-q}    }  \, \ud\tht \, |\bar{\omg}| \\
	& \lesssim \iint_{\Pi_+} \int_0^{\frac{\pi}{2}} \frac{  \bz^{2(q-1)}    (\ct)^2 }{ X^{\frac{q}{2}}    }  \, \ud\tht \, |\bar{\omg}|  \\
	& \lesssim \left(\iint_{\Pi_+} \int_0^{\frac{\pi}{2}}  \frac{1}{X^{\frac{q}{2(3-2q)}} } \, \ud \tht \, |\bar{\omg}| \right)^{3-2q} \left( \iint_{\Pi_+} \bz |\bar{\omg} | \right)^{2(q-1)} .
\end{split}
\end{equation*} When $q < \frac{9}{7}$, we obtain that \begin{equation*}
\begin{split}
	\nrm{Q_1(r,z)}_{L^\infty(\Pi_+)} \le C \nrm{\xi_0}_{L^1}^{3-\frac{7}{3}q} \nrm{\xi_0}_{L^\infty}^{\frac{q}{3}} Z_0^{2(q-1)}, \qquad J_1 \le  C  \nrm{\xi_0}_{L^1}^{3-\frac{7}{3}q} \nrm{\xi_0}_{L^\infty}^{\frac{q}{3}} Z_0^{2(q-1)} P. 
\end{split}
\end{equation*} 

\medskip

\noindent \textit{Estimate of $J_2$}. We first bound \begin{equation*}
	\begin{split}
		J_2 &= \iiiint_{(\Pi_+)^2} \int_0^{\frac{\pi}{2}} \frac{ (r \bR)^2 \bz Y^{q-1} (\ct)^2}{(X\bX^2)^{\frac{q}{2}} Y^{\frac{3}{2}-q}    } \, \ud\tht \, \omg \bar{\omg} \\
		&\lesssim \iiiint_{(\Pi_+)^2} \int_0^{\frac{\pi}{2}} \frac{ r^{4-2q} \bz^{4(q-1)} (\ct)^2}{X^{\frac{q}{2}} } \, \ud\tht \, \omg \bar{\omg} .
	\end{split}
\end{equation*}  Let us now estimate \begin{equation*}
\begin{split}
	Q_2(r,z) & := \iint_{\Pi_+}\int_0^{\frac{\pi}{2}} \frac{ \bz^{4(q-1)} (\ct)^2}{X^{\frac{q}{2}} } \, \ud\tht \,|\bar{\omg}| \\
	& \lesssim \left(\iint_{\Pi_+} \int_0^{\frac{\pi}{2}}  \frac{1}{X^{\frac{q}{2(5-4q)}} } \, \ud \tht \, |\bar{\omg}| \right)^{5-4q} \left( \iint_{\Pi_+} \bz |\bar{\omg} | \right)^{4(q-1)} \\
	& \lesssim  \nrm{\xi_0}_{L^1}^{5-\frac{13}{3}q} \nrm{\xi_0}_{L^\infty}^{\frac{q}{3}}Z_0^{4(q-1)}
\end{split}
\end{equation*} This requires $q < \frac{15}{13}$. Then using H\"older's inequality \begin{equation*}
\begin{split}
	J_2  &\lesssim \iint_{\Pi_+} r^{4-2q} |\omg|^{2-q} \,  Q_2(r,z) |\omg|^{q-1} \,      \ud r\ud z \\
	 & \lesssim  \nrm{Q_2}_{L^\infty}  P^{2-q} \nrm{\xi_0}_{L^1}^{q-1} \\
	 & \lesssim  \nrm{\xi_0}_{L^1}^{5-\frac{13}{3}q}   \nrm{\xi_0}_{L^\infty}^{\frac{q}{3}}Z_0^{4(q-1)} \nrm{\xi_0}_{L^1}^{q-1} P^{2-q}  \\
	 & \lesssim \nrm{\xi_0}_{L^1}^{4-\frac{10}{3}q}   \nrm{\xi_0}_{L^\infty}^{\frac{q}{3}}Z_0^{4(q-1)}P_0^{1-q} P. 
\end{split}
\end{equation*}   Combining the estimates for $J_1$ and $J_2$, we arrive at \begin{equation*}
\begin{split}
	E \le C X_0 (\dot{P})^{1-\frac{1}{q}} P^{\frac{1}{q}}, \qquad 1 < q < \frac{15}{13}. 
\end{split}
\end{equation*} Here, we recall that \begin{equation*}
\begin{split}
	X_0 = \left( \nrm{\xi_0}_{L^1}^{4-\frac{10}{3}q}   \nrm{\xi_0}_{L^\infty}^{\frac{q}{3}}Z_0^{4(q-1)}P_0^{1-q}  +  \nrm{\xi_0}_{L^1}^{3-\frac{7}{3}q} \nrm{\xi_0}_{L^\infty}^{\frac{q}{3}} Z_0^{2(q-1)} \right)^{\frac{1}{q}}. 
\end{split}
\end{equation*} Therefore, \begin{equation*}
\begin{split}
	E_0^{\frac{q}{q-1}} \le C 	X_0^{\frac{q}{q-1}}  \frac{d}{dt} \left( P^{\frac{q}{q-1}} \right),
\end{split}
\end{equation*} or \begin{equation*}
\begin{split}
	P(t) \ge \left(  	C(E_0 X_0^{-1})^{\frac{q}{q-1}} t + 	P_0^{\frac{q}{q-1}}      \right)^{\frac{q-1}{q}}. 
\end{split}
\end{equation*} This finishes the proof. 
\end{proof}

\subsection{Enhancing the growth rate}

In this section, we complete the proof of Theorem \ref{thm:main3}. We shall need a few additional lemmas. 

\begin{lemma}\label{lem:F-est}
	For any $0<\tau\le\frac{3}{2}$, we have \begin{equation}\label{eq:F-est}
		\begin{split}
			\int_0^{\frac{\pi}{2}} \frac{(\ct)^2}{X^{\frac12} \bX} \, \ud \tht \le C_\tau \frac{(r\bR)^{\tau-\frac32}}{d^{2\tau}},\qquad d:= \sqrt{(r-\bR)^2 + (z-\bz)^2}. 
		\end{split}
	\end{equation}
\end{lemma}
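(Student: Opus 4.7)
The plan is to reduce the integral to an elementary one-variable bound via a two-parameter interpolation of the integrand. The starting observation is the geometric content of $X$ and $\bar X$ on $\theta\in[0,\pi/2]$. Writing $r^2+\bar r^2=(r-\bar r)^2+2r\bar r$, one gets
\[
X = d^2 + 2r\bar r(1-\cos\theta), \qquad \bar X = d^2 + 2r\bar r(1+\cos\theta).
\]
On $[0,\pi/2]$ we have $1+\cos\theta\in[1,2]$, so $\bar X \sim d^2 + r\bar r$, while $1-\cos\theta \sim \theta^2$ gives $X\sim d^2+r\bar r\,\theta^2$. This makes the singular behaviour transparent: the $d\to 0$ singularity lives only in a $\theta$--neighborhood of $0$ of size $d/\sqrt{r\bar r}$, while $\bar X$ stays of size $\max(d^2,r\bar r)$ uniformly.

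The next step is to dominate $X^{1/2}$ and $\bar X$ from below by pure powers using the elementary inequality $A+B\ge A^{1-s}B^s$ for $s\in[0,1]$. Apply this with parameters $\gamma\in[0,1)$ and $\beta\in[0,1]$ to get
\[
X^{1/2}\gtrsim d^{1-\gamma}(r\bar r)^{\gamma/2}\theta^\gamma, \qquad \bar X\gtrsim d^{2-2\beta}(r\bar r)^\beta.
\]
Plugging in and bounding $(\cos\theta)^2\le 1$ yields
\[
\int_0^{\pi/2}\frac{(\cos\theta)^2}{X^{1/2}\bar X}\,d\theta \;\lesssim\; \frac{1}{d^{3-\gamma-2\beta}(r\bar r)^{\gamma/2+\beta}}\int_0^{\pi/2}\theta^{-\gamma}\,d\theta,
\]
and the $\theta$--integral converges with constant $C_\gamma = (\pi/2)^{1-\gamma}/(1-\gamma)$ precisely when $\gamma<1$.

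Matching exponents to the target requires $3-\gamma-2\beta=2\tau$ and $\gamma/2+\beta=3/2-\tau$, which is the \emph{same} equation $\gamma + 2\beta = 3-2\tau$. It remains to exhibit, for each $\tau\in(0,3/2]$, an admissible pair $(\gamma,\beta)\in[0,1)\times[0,1]$:
\begin{itemize}
\item If $\tau\in(0,1/2]$, take $\beta=1$ and $\gamma=1-2\tau\in[0,1)$;
\item If $\tau\in(1/2,3/2]$, take $\gamma=0$ and $\beta=3/2-\tau\in[0,1]$.
\end{itemize}
The resulting constant $C_\tau$ is finite but degenerates as $\tau\downarrow 0$ (because $\gamma\uparrow 1$ makes the $\theta$--integral diverge), which is consistent with the hypothesis $\tau>0$ and with the fact that the un-weighted integral has a logarithmic singularity on the diagonal.

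The only real subtlety is to recognize at the outset that $1-\cos\theta\gtrsim \theta^2$ on $[0,\pi/2]$ is what encodes the correct diagonal behavior of $X$; once that is in hand the argument is a one-shot application of the power-interpolation inequality and the range $0<\tau\le 3/2$ arises purely from the constraint $\gamma+2\beta\in[0,3)$.
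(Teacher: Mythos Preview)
Your argument is correct and proceeds along a genuinely different route from the paper's. The paper does not estimate the integral directly; instead it invokes the known bound $\calF(s)\lesssim_\tau s^{-\tau}$ for $\tau\in(0,3/2]$ (cited from Feng--\v{S}ver\'ak), observes that $\sqrt{r\bR}\,\calF(d^2/(r\bR)) = \sqrt{r\bR}\int_0^\pi \frac{\cos\theta}{X^{1/2}}\,\ud\theta$, and then symmetrizes the $\theta$--integral over $[0,\pi]$ using $X(\pi-\theta)=\bX(\theta)$ together with $X\le\bX$ on $[0,\pi/2]$ to convert it into (a quantity comparable to) $4(r\bR)^{3/2}\int_0^{\pi/2}\frac{(\cos\theta)^2}{X^{1/2}\bX}\,\ud\theta$. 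Your approach bypasses the elliptic integral $\calF$ entirely: you expose the geometry $X=d^2+2r\bR(1-\cos\theta)$, $\bX=d^2+2r\bR(1+\cos\theta)$ and then apply the power--interpolation inequality $A+B\ge A^{1-s}B^{s}$ to each factor separately, reducing everything to $\int_0^{\pi/2}\theta^{-\gamma}\,\ud\theta$. What your method buys is self--containment and transparency about where the range $(0,3/2]$ and the blow-up of $C_\tau$ as $\tau\downarrow 0$ come from; what the paper's method buys is brevity, since the analytic work is outsourced to the cited estimate on $\calF$.
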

\begin{proof}
	This comes from the following estimate of $\calF$ (see \cite[Lemma 2.7]{FS}) \begin{equation*}
		\begin{split}
			\calF(s) \lesssim_\tau s^{-\tau}, \qquad \tau\in(0,3/2]. 
		\end{split}
	\end{equation*} We plug in $s = d^2/(r\bR)$ to obtain \begin{equation*}
	\begin{split}
		\sqrt{r\bR} \int_0^\pi \frac{\ct}{X^{\frac12}}\,\ud \tht \lesssim_\tau \frac{(r\bR)^\tau}{d^{2\tau}}. 
	\end{split}
\end{equation*} Symmetrizing the left hand side, \begin{equation*}
\begin{split}
	\sqrt{r\bR} \int_0^\pi \frac{\ct}{X^{\frac12}}\,\ud \tht = \sqrt{r\bR} \int_0^{\frac{\pi}{2}} \left( \frac{\ct}{X^{\frac12}} - \frac{\ct}{\bX^{\frac12}} \right) \,\ud \tht = 4(r\bR)^{\frac32}	\int_0^{\frac{\pi}{2}} \frac{(\ct)^2}{X^{\frac12} \bX} \, \ud \tht . 
\end{split}
\end{equation*} This gives the lemma. 
\end{proof}

\begin{lemma}\label{lem:2D} For any $0\le \alp < 2$, we have 
	\begin{equation*}\label{eq:2D}
		\begin{split}
			\sup_{(\bR,\bz)}\iint_{\Pi_+} \frac{|\omg|}{d^\alp} \, \ud r \ud z \le C_\alp \nrm{\omg}_{L^1(\Pi_+)}^{1-\frac{\alp}{2}} \nrm{\omg}_{L^\infty(\Pi_+)}^{\frac{\alp}{2}}. 
		\end{split}
	\end{equation*}
\end{lemma}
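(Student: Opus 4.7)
\medskip

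\noindent The plan is to mimic the three-dimensional splitting argument already used in Lemma \ref{lem:X-q}, but now working directly in the two-dimensional half-plane $\Pi_+$ with the Lebesgue area element $\ud r \ud z$ and the Euclidean distance $d$ to the fixed point $(\bR,\bz)$. The case $\alp = 0$ is trivial, so I will assume $0<\alp<2$.

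\medskip

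\noindent First, I would fix a parameter $L>0$ (to be optimized at the end) and split the integral as
\[
\iint_{\Pi_+} \frac{|\omg|}{d^{\alp}} \, \ud r \ud z = \iint_{\Pi_+ \cap \{d\le L\}} \frac{|\omg|}{d^{\alp}} \, \ud r \ud z + \iint_{\Pi_+ \cap \{d > L\}} \frac{|\omg|}{d^{\alp}} \, \ud r \ud z .
\]
On the outer region $\{d>L\}$, I use the crude pointwise bound $d^{-\alp} \le L^{-\alp}$ and pull out the $L^1$ norm, obtaining a contribution bounded by $L^{-\alp}\nrm{\omg}_{L^1(\Pi_+)}$. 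On the inner region $\{d\le L\}$, I bound $|\omg|\le \nrm{\omg}_{L^\infty(\Pi_+)}$ pointwise and pass to polar coordinates centered at $(\bR,\bz)$. Since $\Pi_+$ is contained in the full plane, the disc of radius $L$ around $(\bR,\bz)$ has area at most $\pi L^2$, and the integral of $d^{-\alp}$ over it is $\int_0^L \rho^{-\alp} \cdot 2\pi\rho \, \ud \rho = \frac{2\pi}{2-\alp}L^{2-\alp}$, which is finite precisely because $\alp<2$. Hence the inner contribution is bounded by $C_\alp L^{2-\alp}\nrm{\omg}_{L^\infty(\Pi_+)}$.

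\medskip

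\noindent Combining the two pieces gives, uniformly in $(\bR,\bz)$,
\[
\iint_{\Pi_+} \frac{|\omg|}{d^{\alp}} \, \ud r \ud z \le C_\alp \left( L^{2-\alp}\nrm{\omg}_{L^\infty(\Pi_+)} + L^{-\alp}\nrm{\omg}_{L^1(\Pi_+)} \right).
\]
Choosing $L$ so that the two terms balance, namely
\[
L = \left( \frac{\nrm{\omg}_{L^1(\Pi_+)}}{\nrm{\omg}_{L^\infty(\Pi_+)}} \right)^{\frac12},
\]
(which is well-defined whenever $\omg$ is not identically zero; otherwise the claim is trivial), I arrive at the asserted bound $C_\alp \nrm{\omg}_{L^1(\Pi_+)}^{1-\alp/2} \nrm{\omg}_{L^\infty(\Pi_+)}^{\alp/2}$. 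No step here is a real obstacle; the only point that requires the hypothesis $\alp<2$ is the integrability of $\rho^{1-\alp}$ at the origin, exactly as in the $q<3$ condition of Lemma \ref{lem:X-q}, and the entire argument is independent of the choice of $(\bR,\bz)$, so the supremum is controlled by the same constant.
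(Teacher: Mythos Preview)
Your proof is correct and follows exactly the approach indicated in the paper, which simply notes that the argument is ``completely parallel to that for Lemma~\ref{lem:X-q}'' with the two-dimensional splitting at scale $L$ and optimization $L=(\nrm{\omg}_{L^1}/\nrm{\omg}_{L^\infty})^{1/2}$. There is nothing to add.
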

\begin{proof}
	The proof is completely parallel to that for Lemma \ref{lem:X-q} in the above. See also Lemma 2.1. of \cite{ISG99}. 
\end{proof}

\begin{proof}[Proof of Theorem \ref{thm:main3}]
	Towards a contradiction, we assume that there exists $M>0$ such that \begin{equation*}
		\begin{split}
			\sup_{t\ge1}\frac{\nrm{\omg(t,\cdot)}_{L^\infty}}{t^{\bt}}\le M. 
		\end{split}
	\end{equation*} Based on this assumption, we now obtain \textit{improved estimates} on $J_1$ and $J_2$ (in \eqref{j1j2}). \textbf{In this proof, implicit constants will depend on the initial data and $M$.}

	\medskip
	
	\noindent \textbf{Claim.} 
 {For each 	$1 < q < \frac{11}{8}$,
	  if 
$\varepsilon>0$ is sufficiently small, then	 
	 \begin{equation*}\label{eq:J1}
		\begin{split}
			J_1 \lesssim P^{\frac12+ \varepsilon}(1 + t^{\frac12 (q-1+2 \varepsilon)\bt }). 
		\end{split}
	\end{equation*} Similarly, we have } \begin{equation*}\label{eq:J2}
	\begin{split}
		J_2 \lesssim P^{\frac32 -q+ \varepsilon }(1 + t^{\frac12 (q-1+2 \varepsilon)\bt}), \qquad 1 < q < \frac{17}{14}. 
	\end{split}
\end{equation*} 

\medskip 

\noindent 
We first check the estimate for $J_1$. This time, we bound using \eqref{eq:F-est} \begin{equation*}
	\begin{split}
		J_1 &= \iiiint_{(\Pi_+)^2} \int_0^{\frac{\pi}{2}} \frac{ (r \bR)^2 \bz \bX^{q-1} (\ct)^2}{(X\bX^2)^{\frac{q}{2}} Y^{\frac{3}{2}-q}    } \, \ud\tht \, \omg \bar{\omg} \\ 
		&\lesssim  \iiiint_{(\Pi_+)^2} (r \bR)^2 \bz^{2(q-1)} \, \frac{1}{d^{q-1}} \int_0^{\frac{\pi}{2}}   \frac{ (\ct)^2}{   X^{\frac12} \bX } \, \ud\tht \, \omg \bar{\omg} \\
		&\lesssim  \iiiint_{(\Pi_+)^2} (r \bR)^2 \bz^{2(q-1)} \, \frac{1}{d^{q-1}} \, \frac{(r\bR)^{\tau-\frac32}}{d^{2\tau}}  \, \omg \bar{\omg} .
	\end{split}
\end{equation*} Next, we use H\"older's inequality to bound \begin{equation*}
\begin{split}
	&\iiiint_{(\Pi_+)^2} (r \bR)^2 \bz^{2(q-1)} \, \frac{1}{d^{q-1}} \, \frac{(r\bR)^{\tau-\frac32}}{d^{2\tau}}  \, \omg \bar{\omg}  \\
	&\qquad \lesssim \left(  \iiiint_{(\Pi_+)^2} (r \bR)^2   \, \omg \bar{\omg}     \right)^{\frac14(1+2\tau)} \left(   \iint_{\Pi_+} \bz^{\frac{8(q-1)}{3-2\tau}} \bar{\omg} \, \iint_{\Pi_+} \frac{\omg}{d^{(q-1+2\tau)\frac{4}{3-2\tau}}} \right)^{\frac34 - \frac{\tau}{2}} 
\end{split}
\end{equation*} We now use Lemma \ref{lem:2D}: for $\tau>0$ small and $q<\frac{11}{8}$, \begin{equation*}
\begin{split}
	\sup_{ (\bR,\bz) \in \Pi_+ } \iint_{\Pi_+} \frac{|\omg|}{d^{\frac{4(q-1+2\tau)}{3-2\tau} }} \lesssim  \nrm{\omg}_{L^1(\Pi_+)}^{1-\frac{2(q-1+2\tau)}{(3-2\tau)} } (Mt^\beta)^{\frac{2(q-1+2\tau)}{(3-2\tau)}} \lesssim t^{\frac{2(q-1+2\tau)}{(3-2\tau)}\bt }, \qquad t\ge 1. 
\end{split}
\end{equation*} Then, as long as $\frac{8}{3}(q-1)<1$, we can take $\tau>0$ sufficiently small and estimate \begin{equation*}
\begin{split}
	 \iint_{\Pi_+} \bz^{\frac{8(q-1)}{3-2\tau} } |\bar{\omg}| \lesssim Z_0^{\frac{8(q-1)}{3-2\tau}} \nrm{\omg}_{L^1(\Pi_+)}^{1 -\frac{8(q-1)}{3-2\tau}} \lesssim 1. 
\end{split}
\end{equation*} This gives for $t\ge1$ \begin{equation*}
\begin{split}
	J_1 \lesssim P^{\frac{1}{2}(1+2\tau)} t^{ \frac{q-1+2\tau}{2}\bt}. 
\end{split}
\end{equation*} We now proceed to the estimate of $J_2$: using \eqref{eq:F-est} and H\"older's inequality, \begin{equation*}
\begin{split}
	 J_2 & = \iiiint_{(\Pi_+)^2} \int_0^{\frac{\pi}{2}} \frac{ (r \bR)^2 \bz Y^{q-1} (\ct)^2}{(X\bX^2)^{\frac{q}{2}} Y^{\frac{3}{2}-q}    } \, \ud\tht \, \omg \bar{\omg} \\
	 & \lesssim  \iiiint_{(\Pi_+)^2}  \frac{ (r \bR)^{3-q} \bz^{4(q-1)} }{d^{q-1}} \int_0^{\frac{\pi}{2}} \frac{ (\ct)^2}{X^{\frac12}\bX} \, \ud\tht \, \omg \bar{\omg} \\
	 &  \lesssim  \iiiint_{(\Pi_+)^2}  \frac{ (r \bR)^{\frac32-q + \tau} \bz^{4(q-1)} }{d^{q-1+2\tau}}  \, \omg \bar{\omg} \\
	 &  \lesssim \left(  \iiiint_{(\Pi_+)^2} (r \bR)^2   \, \omg \bar{\omg}     \right)^{ \frac{1}{2}(\frac32-q + \tau) } \left(   \iint_{\Pi_+} \bz^{\frac{16(q-1)}{ 1+2q-2\tau } }\bar{\omg} \, \iint_{\Pi_+} \frac{\omg}{d^{\frac{4(q-1+2\tau)}{1+2q-2\tau} } } \right)^{\frac12 (\frac{1}{2} + q - \tau)}.  
\end{split}
\end{equation*} As long as $q< \frac{17}{14}$, we can take $\tau>0$ sufficiently small that \begin{equation*}
\begin{split}
	 \iint_{\Pi_+} \bz^{\frac{16(q-1)}{ 1+2q-2\tau } }|\bar{\omg} | \lesssim 1, \qquad \sup_{ (\bR,\bz) \in \Pi_+ } \iint_{\Pi_+} \frac{|\omg|}{d^{\frac{4(q-1+2\tau)}{1+2q-2\tau} } }  \lesssim t^{ \frac{2(q-1+2\tau)}{1+2q-2\tau}\bt }. 
\end{split}
\end{equation*} This gives \begin{equation*}
\begin{split}
	J_2 \lesssim  P^{\frac{3}{2} - q + \tau } t^{ \frac{q-1+2\tau}{2}\bt}. 
\end{split}
\end{equation*} This finishes the proof of \textbf{Claim}.

\medskip 

 {
\noindent In the following, $\eps>0$ will denote a sufficiently small constant whose value could change from a line to another. Given \textbf{Claim}, we now observe, 
for any $1<q<17/14$,   \begin{equation*}
	\begin{split}
		J \lesssim J_1 + J_2 \lesssim P^{\frac12+ {\varepsilon}}(1+t)^{\frac 1 2 (q-1+2\varepsilon)\beta} 
	\end{split}
\end{equation*}
since $P$ is increasing
(recall \eqref{jj1j2} and the above for $J$).
Recall from Proposition \ref{prop:key-1} that \begin{equation*}
\begin{split}
	P \gtrsim (1+t)^{\frac{2}{15}-\eps}. 
\end{split}
\end{equation*} Hence, combining the previous two inequalities, we deduce \begin{equation*}
\begin{split}
	J \lesssim P^{\frac12 + \frac{15}{4}(q-1) \bt + \eps}. 
\end{split}
\end{equation*} Returning to the energy estimate \eqref{en_est}: \begin{equation*}
\begin{split}
	E \lesssim \dot{P}^{1-\frac{1}{q}} J^{\frac{1}{q}} 
\end{split}
\end{equation*} and applying the above upper bound for $J$, we obtain that \begin{equation*}
\begin{split}
	\dot{P} P^{\frac{(1/2)}{q-1} + \frac{15}{4} \bt+ \eps } \gtrsim 1, 
\end{split}
\end{equation*} which gives upon integration
after taking $q$ close to $17/14$ that \begin{equation}\label{contra}
\begin{split}
	P \gtrsim t^{ \frac{1}{\frac{10}{3} + \frac{15}{4} \bt  } - \eps}.
\end{split}
\end{equation}

 On the other hand, from the assumption on the vorticity maximum, the estimate \eqref{est_R_t_2} with Lemma \ref{lem:P-ineq-Linfty2} implies that  
 	\begin{equation}
		\begin{split}
			P(t) \lesssim R(t)^\delta \nrm{\omg(t,\cdot)}_{L^\infty (\bbR^3_+)}^{2-\delta}\lesssim (1+t)^{2\delta} t^{(2-\delta)\beta}
			\lesssim (1+t)^{2\delta+(2-\delta)\beta},
		\end{split}
	\end{equation} where $R(t)=  \sup\{ r : (r,z) \in \supp(\omg(t,\cdot)) \}$.  Therefore, we derive a contradiction to \eqref{contra} once \begin{equation}\label{ineq}
\begin{split}
	2\delta+(2-\delta)\beta <  \frac{1}{\frac{10}{3} + \frac{15}{4} \bt}. 
\end{split}
\end{equation} Note that, for each $\delta\in[0,3/20)$, we denote the unique positive root of $$(2\delta+(2-\delta)\beta)\cdot ( \frac{10}{3} + \frac{15}{4} \bt ) = 1$$  by $\bt_0=\bt_0(\delta)$. Observing that the previous inequality \eqref{ineq} holds for any $\bt\in(0,\bt_0)$, we are done.}
\end{proof}

\begin{remark} The explicit value of $\beta_0(\delta)$ is
	$$\beta_0(\delta)=
	\frac{8+5\delta}{9(\delta-2)}+\frac{\sqrt{536-1148\delta+845\delta^2}}{9(2-\delta)\sqrt{5}},$$ which is strictly positive whenever 
	$0\leq \delta<3/20$. In particular, it satisfies
	$$
	\beta_0(\delta)>  \beta_1(\delta) \quad\mbox{when}\quad 
	0\leq \delta<1/15,
	$$ with
	$\beta_1={\frac{\frac{2}{15}-2\delta}{2-\delta} }$ 
	from Corollary 
	\ref{cor:main2}.
\end{remark}

\section{Infinite gradient growth}\label{sec_Holder}

In this section, we   distinguish between 
the axial vorticity vector $\omega(\bold{x})$ and its scalar quantity $\omega^\theta(r,z)=r\xi(r,z)$, and we prove Theorem \ref{thm:gradient-growth}. 

\medskip

\noindent \textbf{Step 1}. \uline{Choice of initial data and contradiction hypothesis}: We consider $$\xi_{0}(r,z)\in C^\infty(\overline{\bbR_+^2}),\quad \overline{\bbR^2_+}:=\{r\geq 0, z\in\mathbb{R}\},$$ which is 
an odd function in $z$,
 satisfying the following properties on the first quadrant $\overline{(\bbR_+)^2}:=\{r\geq 0, z\geq 0\}$: \begin{itemize}
	\item $0\leq (-\xi_{0}) \leq 1$ on  $(\bbR_+)^2$,
	\item $\supp(\xi_{0})\cap  \overline{(\bbR_+)^2}\subset [0,1]^2$, 
	\item $\xi_{0} = -1$ on a set $\Omg_{0}\subset [0,1]^2$ satisfying $\int_{\Omega_0}  \,  r\ud r \ud z \ge \frac{9}{10} \int_{[0,1]^2} \,  r\ud r \ud z $,  
	\item $\xi_{0} = -z$ on $[0,1/2]\times[0,1/10]$. 
\end{itemize}
 It is not difficult to see that there exists such $\xi_{0}(r,z)$ such that the corresponding axial vorticity
$$\omega_0(\bold{x})=\omega^{\theta}_0e_\theta(\theta)=r\xi_0(r,z) e_\theta(\theta)$$  belongs to the class $C^\infty_c(\bbR^3)$. For instance, the troublesome  near the axis $\{r=0\}$ can be removable once we assume $$\xi_0(r,z)=a(z),\quad 0\leq r \leq r_0,\quad z\in\mathbb{R},$$ 
for some $r_0>0$ and for some function 
$a\in C^\infty(\mathbb{R})$ since
$$re_\tht(\tht)=(-r\sin \tht, r\cos\tht,0)=(-x_2, x_1,0)$$ is smooth in $\mathbb{R}^3$. Towards a contradiction, assume that the corresponding solution $\omg$ satisfies \begin{equation*}
\begin{split}
	\sup_{t\geq 0} \nrm{\omg(t,\cdot)}_{C^\alp(\bbR^3)} \le M
\end{split}
\end{equation*} for some $M>0$ and $\alp>0$. Our convention of the H\"older norm is simply \begin{equation*}
\begin{split}
	\nrm{\omg(t,\cdot)}_{C^\alp(\bbR^3)} &:= \nrm{\omg(t,\cdot)}_{L^\infty(\bbR^3)}  + \nrm{\omg(t,\cdot)}_{C^\alp_*(\bbR^3)}  \\
	&:= \sup_{\bold{x}\in\mathbb{R}^3} |\omg(t,\bold{x})| + \sup_{\bold{x} \ne \bold{x}'} \frac{|\omg(t,\bold{x})-\omg(t,\bold{x}')|}{|\bold{x}-\bold{x}'|^\alp}. 
\end{split}
\end{equation*}

\medskip

\noindent \textbf{Step 2}. \uline{Bounds on the solution}: To begin with, we observe that \begin{equation}\label{bounded_traj}
	\begin{split}
		 \sup_{(r,z) \in \Omg_{0}} \Phi^r(t,(r,z)) \le M, \quad t\geq 0,
	\end{split}
\end{equation} 
where $\Phi$ is the trajectory map \eqref{ode}.
This simply follows from \begin{equation*}
\begin{split}
	M \ge -\omg(t,\Phi(t,(r,z))) = -\Phi^r(t,(r,z)) \xi_{0}(r,z) = \Phi^r(t,(r,z))
\end{split}
\end{equation*} for $(r,z) \in \Omg_{0}$, from the definition of $\Omg_{0}$.  Next,
  we recall the Calderon--Zygmund estimate (cf. \cite{MB}) \begin{equation}\label{eq:velgrad}
\begin{split}
	\nrm{\nb u}_{C^\alp_*(\bbR^3)} \le C_\alp \nrm{\omg}_{C^\alp_*(\bbR^3)} \le C_\alp M. 
\end{split}
\end{equation}
\medskip
 {
\noindent \textbf{Step 3}. \uline{Non-trivial mass on a bounded region for all time}: 
 We claim that there exists an absolute constant $c>0$ such that\begin{equation}\label{claim:center}
\begin{split}
	 \int_{  \{ 0 \le z \leq 2,\,\, 0\leq r\leq M \}} \mathbf{1}_{\Omg(t)} \, r \ud r \ud z=\int_{  \{ 0 \le z \leq 2,\,\, r>0 \}} \mathbf{1}_{\Omg(t)} \, r \ud r \ud z \ge c>0,\quad t\geq 0. 
\end{split}
\end{equation} where $\Omg(t):=\Phi(t,\Omg_0)$.
Indeed, the   equality above simply follows from \eqref{bounded_traj}. For the inequality, we first observe
\begin{equation*}
\begin{split}
	 {\int_{(\bbR_+)^2} z(-\xi(t,r,z))  \, r \ud r \ud z}  &\leq  {\int_{(\bbR_+)^2} {z}(-\xi_{0}(r,z))  \, r \ud r \ud z}  \\ &\leq {\int_{(\bbR_+)^2} (-\xi_0(r,z))  \, r \ud r \ud z}={\int_{(\bbR_+)^2}  (-\xi(t,r,z))  \, r \ud r \ud z} ,
\end{split}
\end{equation*}
 where we used   monotonicity of 
the vorticity $z$--impulse (Lemma \ref{lem:z-down}) and the fact that  $ (-\xi_0)$ as a function on $\overline{(\bbR_+)^2}$ is non-negative and supported in $[0,1]^2$.
Then we estimate  \begin{equation*} 
\begin{split}
	  {\int_{(\bbR_+)^2}  (-\xi(t,r,z))  \, r \ud r \ud z}  &\geq 
	   {\int_{(\bbR_+)^2}  z(-\xi(t,r,z))  \, r \ud r \ud z}    \geq
	    {\int_{\{  z > 2,\,\,r>0\}}  z(-\xi(t,r,z))  \, r \ud r \ud z} \\ & \geq 
	2    {\int_{\{  z > 2,\,\,r>0\}}   (-\xi(t,r,z))  \, r \ud r \ud z} 
	\\ & = 2   {\int_{(\bbR_+)^2}  (-\xi(t,r,z))  \, r \ud r \ud z} -
	2    {\int_{\{  0\leq z \leq 2,\,\,r>0\}}   (-\xi(t,r,z))  \, r \ud r \ud z},   
\end{split}
\end{equation*} which implies  \begin{equation}\label{center_mass_est}
\begin{split}
	 {\int_{  \{ 0 \le z \leq 2,\,\, r>0 \}}  (-\xi(t,r,z))  \, r \ud r \ud z}  \geq \frac 1 2 {\int_{(\bbR_+)^2}  (-\xi(t,r,z))  \, r \ud r \ud z}.
\end{split}
\end{equation}
The right-hand term of \eqref{center_mass_est} has a lower bound:
\begin{equation*}
\begin{split}
  \frac 1 2 {\int_{(\bbR_+)^2}  (-\xi(t,r,z))  \, r \ud r \ud z} 
  &=  \frac 1 2 {\int_{(\bbR_+)^2}  (-\xi_0(r,z))  \, r \ud r \ud z} \\
  &\geq  \frac 1 2 {\int_{\Omega_0}   \, r \ud r \ud z}  
  \geq   \frac 1 2\cdot \frac 9 {10}{\int_{  [0,1]^2 }   \, r \ud r \ud z}
\end{split}
\end{equation*}  while the left-hand term of \eqref{center_mass_est} has an upper bound:
\begin{equation*}
\begin{split}
	{\int_{  \{ 0 \le z \leq 2,\,\, r>0 \}}  (-\xi(t,r,z))  \, r \ud r \ud z}  &= {\int_{  \{ 0 \le z \leq 2,\,\, r>0 \}\cap \Omega(t)}    \, r \ud r \ud z} + {\int_{  \{ 0 \le z \leq 2,\,\, r>0 \}\setminus\Omega(t)}  (-\xi(t,r,z))  \, r \ud r \ud z}. 
\end{split}
\end{equation*} For the last term above, we estimate
\begin{equation*}
\begin{split}
  {\int_{  \{ 0 \le z \leq 2,\,\, r>0 \}\setminus\Omega(t)}  (-\xi(t,r,z))  \, r \ud r \ud z}&\leq  {\int_{  \Omega(t)^c}  (-\xi(t,r,z))  \, r \ud r \ud z}  = {\int_{  \Omega_0^c}  (-\xi_0(r,z))  \, r \ud r \ud z}\\
 = & {\int_{ [0,1]^2\setminus \Omega_0}  (-\xi_0(r,z))  \, r \ud r \ud z}\leq  {\int_{ [0,1]^2\setminus \Omega_0}   \, r \ud r \ud z}\leq  \frac 1 {10} {\int_{ [0,1]^2 }   \, r \ud r \ud z}.
	 \end{split}
\end{equation*} 
Combining the above estimates, \eqref{center_mass_est} shows
 \begin{equation*}
\begin{split}
	\left(\frac 9 {20}-\frac 1 {10}\right)\cdot  {\int_{  [0,1]^2 }   \, r \ud r \ud z}     \leq  {\int_{  \{ 0 \le z \leq 2,\,\, r>0 \}\cap \Omega(t)}    \, r \ud r \ud z},
\end{split}
\end{equation*} which gives the claim \eqref{claim:center}
}
\medskip

\noindent \textbf{Step 4}. \uline{Velocity gradient near the origin}: 
 {
First, we recall the Biot-Savart law in $\mathbb{R}^3$ (\textit{e.g.} see \cite{MB}): $$u(t,\bold{x})=[\nabla\times (-\Delta)^{-1}\omega(t,\cdot)](\bold{x})
=\frac{1}{4\pi}\int_{\mathbb{R}^3} \frac{(\bold{x}-\bold{y})\times \omega(t,\bold{y})}{|\bold{x}-\bold{y}|^3}\ud \bold{y}.$$ We observe
$$
u^{x_3}(t,\bold{x})|_{x_1=x_2=0}=C \int_{\mathbb{R}^2_+}
\frac{r^2 \xi(t,r,z)}{(r^2+(x_3-z)^2)^{3/2}}r\ud r \ud z
$$  and 
$$
[\partial_{x_3}u^{x_3}](t,\bold{x})|_{x_1=x_2=0}=-C \int_{\mathbb{R}^2_+}
\frac{r^2 (x_3-z) \xi(t,r,z)}{(r^2+(x_3-z)^2)^{5/2}}r\ud r \ud z.
$$ Since $\xi$ is odd in $z$, we have, at the origin,
\begin{equation*}
	\begin{split}
		-\rd_{z} u^{z}(t,\mathbf{0})
		 = C \int_{(\bbR_+)^2} \frac{r^{2}z}{(r^2+z^2)^{\frac{5}{2}}}(-\xi(t,r,z) )\, r \ud r \ud z
	\end{split}
\end{equation*} 
for some absolute constant $C>0$. Now we estimate $-\rd_{z} u^{z}(t,\mathbf{0})$ from above and below.}
To begin with, we observe the uniform bound \begin{equation*}
\begin{split}
	0 \le -\rd_{z} u^{z}(t,\mathbf{0}) \le C(\nrm{\xi}_{L^1(\bbR^3)} + \nrm{\xi}_{L^\infty(\bbR^3)} )= C(\nrm{\xi_{0}}_{L^1(\bbR^3)} + \nrm{\xi_{0}}_{L^\infty(\bbR^3)} ) \le C. 
\end{split}
\end{equation*} This follows from considering separately the regions $\{ r^2+z^2 \le 1 \}$ and $\{ r^2+z^2 > 1 \}$.
We now proceed to obtain a lower bound. Since the integrand is non-negative, 
 we may estimate  {\begin{equation}\label{eq:velgrad-lb}
\begin{split}
	-\rd_{z} u^{z}(t,\mathbf{0}) & \geq  C\int_{(\bbR_+)^2}\frac{r^{2}z}{(r^2+z^2)^{\frac{5}{2}}}  \mathbf{1}_{\Omg(t)} \, r \ud r \ud z\\
	& \geq  \frac{C}{(M^2 + 2^2)^{\frac{5}{2}}}\int_{  \{ 0 \le z \leq 2,\,\, 0\leq r\leq M\} }  r^2 z  \mathbf{1}_{\Omg(t)} \, r \ud r \ud z \ge C_M,
\end{split}
\end{equation} where the last inequality follows from   \eqref{claim:center}, and  $C_M>0$ is a constant depending only on $M$.}
 Next, recall that from  {the divergence-free condition $	\rd_r (r u^r) + \rd_z (r u^z) = 0$,} we have \begin{equation*}
	\begin{split}
		2\rd_{r} u^{r}(t,\mathbf{0})= -\rd_{z} u^{z}(t,\mathbf{0})>0.
	\end{split}
\end{equation*} 
Therefore, using \eqref{eq:velgrad-lb} with \eqref{eq:velgrad}, we can take  {sufficiently small} $L = L(M,\alp)>0$ such that for $(r,z) \in [0,L]^2$, \begin{equation*}
\begin{split}
	-\frac{5}{4}\rd_{z} u^{z}(t,\mathbf{0}) > -\rd_{z} u^{z}(t,r,z) > -\frac{3}{4}\rd_{z} u^{z}(t,\mathbf{0})
\end{split}
\end{equation*} and \begin{equation*}
\begin{split}
	\frac{5}{4}\rd_{r} u^{r}(t,\mathbf{0}) > \rd_{r} u^{r}(t,r,z) > \frac{3}{4}\rd_{r} u^{r}(t,\mathbf{0})
\end{split}
\end{equation*} uniformly for all $t\ge0$. In particular, on $[0,L]^2$, it is guaranteed that the velocity field is pointing southeast for all times  {since $u^r|_{r=0}\equiv 0 $ and $u^z|_{z=0}\equiv 0$}. 

\medskip

\noindent \textbf{Step 5}. \uline{Growth of  {the H\"older norm}}: From the equation for $\xi$, we obtain that \begin{equation*}
	\begin{split}
	 {	\rd_t \frac{\xi}{z^\alpha} + u\cdot\nb \frac{\xi}{z^\alpha}= - \frac{u^{z}}{\alpha z} \frac{\xi}{z^\alpha}. }
	\end{split}
\end{equation*} We take the point $x^\eps_0 := (\eps, L)$ with $0<\eps<L$. As long as  {$x^\eps(\tau):=\Phi(\tau,x^\eps_0) \in [0,L]^2$ for all $\tau\in[0,t]$,} we have from integrating the above that \begin{equation*}
\begin{split}
	 {-\frac{\xi}{ {z^\alpha}} (t,x^\eps(t)) = \exp\left( - \int_0^t  \frac{u^z}{ {\alpha z}} (\tau,x^\eps(\tau))\,\ud \tau  \right) \cdot\left(-\frac{\xi_0}{ {z^\alpha}} (x^\eps_0)\right)\ge L^{1-\alpha}\exp\left( c t   \right),\quad {c=c(M,\alpha)>0.}}
\end{split}
\end{equation*} Here, we could have assumed that $L>0$ is sufficiently small, so that $\xi_0=-z$ on $[0,L]^2$.  {Let $0<T_\eps<\infty$} be the first time when $\Phi^r(t,x^\eps_0) = L$. 
 {The definition of $T_\eps$ makes sense since the trajectory is moving to the southeast and cannot touch the axis $\{z=0\}$ due to the unique solvability of the O.D.E. \eqref{ode}.}
From the uniform bound on $\rd_r u^r$ on $[0,L]^2$ and the fact that $u^r(t,0,z)=0$, we have that $T_\eps \to\infty$ as $\eps\to0$. This shows that \begin{equation*}
\begin{split}
 {	-\frac{\xi}{z^\alpha} (T_\eps, x^\eps(T_\eps ))  \ge L^{1-\alpha}\exp\left( cT_\eps  \right)}
\end{split}
\end{equation*} can be as large as we want. But then from \begin{equation*}
\begin{split}
	 {	\nrm{\omg}_{C^\alpha} \ge -\frac{\omg^\theta}{z^\alpha} (T_\eps, x^\eps(T_\eps ))  = -L\frac{\xi}{z^\alpha} (T_\eps, x^\eps(T_\eps )) , }
\end{split}
\end{equation*} we obtain a contradiction by taking $\eps\to0$.

\section{Enstrophy growth for Navier--Stokes solutions}\label{sec_navier}
 
In this section, we demonstrate that infinite enstrophy growth for Euler can be translated to \textbf{enstrophy inflation} for Navier--Stokes with small viscosity. While this is strictly speaking not necessary, we restrict ourselves to the axisymmetric systems without swirl: in this case, the 3D Navier--Stokes equations simplify to \begin{equation}\label{eq:NSE}
	\begin{split}
		\rd_t \left( \frac{\omg}{r} \right) + u \cdot \nb \left( \frac{\omg}{r} \right)  = \nu\left( \lap + \frac{2}{r}\rd_r \right)\left( \frac{\omg}{r} \right)  , \qquad u = K[\omg]. 
	\end{split}
\end{equation} In the statement below, we assume that $\omg_0, \omg_0^\nu$ are axisymmetric axial vorticities. 
\begin{proposition}
	Let $\omg_0 \in L^\infty (\bbR^3)$ be compactly supported with finite kinetic energy. Furthermore, assume that the corresponding Euler solution \eqref{eq:Euler-axisym-no-swirl} exists globally in time with $u \in L^\infty_{t,loc} W^{1,\infty}_x$ and \begin{equation*}
		\begin{split}
			\limsup_{t\to\infty} \nrm{\omg(t,\cdot)}_{L^2(\bbR^3)} = +\infty. 
		\end{split}
	\end{equation*} Then, for any sequence of initial data $\omg_0^\nu \in L^\infty \cap L^1 (\bbR^3)$ with finite kinetic energy which converges to $\omg_0$ strongly in $L^2(\bbR^3)$ as $\nu\rightarrow 0^+$, we have that \begin{equation*}
	\begin{split}
		\liminf_{\nu\to 0^+} \sup_{t \in [0,\infty]} \nrm{\omg^\nu(t,\cdot)}_{L^2(\bbR^3)} = +\infty, 
	\end{split}
\end{equation*} where $\omg^\nu$ is the solution to \eqref{eq:NSE} with initial data $\omg^\nu_0$ and viscosity $\nu$. 
\end{proposition}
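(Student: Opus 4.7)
The plan is to argue by contradiction: suppose $M := \liminf_{\nu \to 0^+} \sup_{t \ge 0} \nrm{\omg^\nu(t,\cdot)}_{L^2(\bbR^3)} < +\infty$ and extract a subsequence $\nu_k \to 0$ with $\sup_{t \ge 0}\nrm{\omg^{\nu_k}(t,\cdot)}_{L^2} \le M+1$ for every $k$. By the infinite-growth hypothesis on the Euler solution, fix a time $T>0$ with $\nrm{\omg(T,\cdot)}_{L^2} > M + 2$. I will show that $\omg^{\nu_k}(T,\cdot) \weakto \omg(T,\cdot)$ weakly in $L^2(\bbR^3)$ along a further subsequence, which by weak lower semicontinuity of the $L^2$--norm forces $\nrm{\omg(T,\cdot)}_{L^2} \le M+1$, contradicting the choice of $T$.

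The weak convergence at time $T$ reduces to a vanishing-viscosity stability estimate on the velocity. Setting $w^\nu := u^\nu - u$ and subtracting the Navier--Stokes and Euler momentum equations, $w^\nu$ is divergence-free and satisfies
\[ \rd_t w^\nu + (u^\nu \cdot \nb) w^\nu + (w^\nu \cdot \nb) u + \nb \pi^\nu = \nu \lap u^\nu . \]
Testing against $w^\nu$, the transport term $(u^\nu\cdot\nb)w^\nu$ and the pressure drop out, while $\nu\int \lap u^\nu \cdot w^\nu = -\nu \nrm{\nb w^\nu}_{L^2}^2 - \nu\int \nb u \cdot \nb w^\nu$; Young's inequality absorbs the cross term to yield
\[ \frac{d}{dt} \nrm{w^\nu}_{L^2}^2 \le 2 \nrm{\nb u}_{L^\infty} \nrm{w^\nu}_{L^2}^2 + \nu \nrm{\nb u}_{L^2}^2. \]
The hypothesis $u \in L^\infty_{t,loc} W^{1,\infty}_x$ controls $\nrm{\nb u(t,\cdot)}_{L^\infty}$ on $[0,T]$, and by Lemma~\ref{lem:FS} combined with the Cauchy formula \eqref{Cauchy}, both $\nrm{\omg(t,\cdot)}_{L^\infty}$ and the diameter of $\supp\,\omg(t,\cdot)$ are bounded on $[0,T]$, so $\nrm{\nb u(t,\cdot)}_{L^2}^2 = \nrm{\omg(t,\cdot)}_{L^2}^2$ is finite there. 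Grönwall therefore gives $\nrm{w^\nu(T,\cdot)}_{L^2}^2 \le e^{CT}\bigl(\nrm{w^\nu(0,\cdot)}_{L^2}^2 + \nu\, C(T)\bigr)$.

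It remains to upgrade the strong $L^2$--convergence of initial vorticities to strong $L^2$--convergence of initial velocities $u_0^{\nu_k}\to u_0$, after which $\nrm{w^{\nu_k}(T,\cdot)}_{L^2} \to 0$; taking $\nb\times$ then shows $\omg^{\nu_k}(T,\cdot) - \omg(T,\cdot) \to 0$ in $H^{-1}(\bbR^3)$, and combined with the uniform bound $\nrm{\omg^{\nu_k}(T,\cdot)}_{L^2}\le M+1$ this promotes to the sought weak $L^2$--convergence by uniqueness of weak limits. The main obstacle I anticipate is precisely this initial-velocity step: in $\bbR^3$ the Biot--Savart operator does not map $L^2$ to $L^2$, so strong vorticity convergence alone is not enough. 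To handle it, I would exploit the axisymmetric no-swirl structure, where the velocity is generated by the kernel $G$ of Section~\ref{sec:prelim} from a scalar source in $L^1\cap L^\infty$; the finite kinetic energy of $\omg_0$ and $\omg_0^{\nu_k}$ provides uniform tightness, and the $L^2$ convergence of the vorticities together with a Hardy--Littlewood--Sobolev / dominated-convergence argument applied to the Riesz-type potential yields strong $L^2$ convergence of the velocities, at least along a further subsequence. Every remaining step is a classical vanishing-viscosity stability argument that succeeds precisely because of the assumed Lipschitz regularity of $u$.
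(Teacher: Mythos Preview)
Your approach is genuinely different from the paper's and, in spirit, more constructive. The paper argues by contradiction as you do, but instead of a direct stability estimate it uses a soft compactness argument: from the bound $\sup_t\nrm{\omg^{\nu_n}(t,\cdot)}_{L^2}\le M$ it extracts a weak-star limit $\hat\omg$ in $L^\infty([0,T];L^2)$, checks that $\hat\omg$ is a weak Euler solution with the correct initial data, and then invokes weak--strong uniqueness (citing Danchin and Brenier--De~Lellis--Sz\'ekelyhidi) against the Lipschitz Euler solution $u$ to conclude $\hat\omg=\omg$. Weak lower semicontinuity then gives the contradiction. Your Dafermos--DiPerna type relative-energy estimate is a legitimate and more quantitative alternative, and would in fact give a rate of convergence; the paper's route trades this for robustness with respect to the initial data.

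That said, there is a genuine gap at exactly the point you flagged. Your Gr\"onwall argument needs $\nrm{u_0^{\nu_k}-u_0}_{L^2}\to 0$, and the hypotheses do \emph{not} supply this: we are only told that each $\omg_0^\nu$ lies in $L^1\cap L^\infty$ with finite kinetic energy and that $\omg_0^\nu\to\omg_0$ in $L^2$, with no uniformity in the $L^1$ norms or the kinetic energies. Without a uniform low-frequency control (e.g.\ a uniform $L^1$ bound on $\omg_0^\nu$ or a uniform bound on $\nrm{u_0^\nu}_{L^2}$), one can add to $\omg_0$ a perturbation that is small in $L^2$ but whose Biot--Savart velocity is large in $L^2$ (rescale a fixed profile to low frequencies), so $\nrm{u_0^{\nu}-u_0}_{L^2}$ need not vanish. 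Your proposed fix via ``uniform tightness from finite kinetic energy'' does not close this, because finiteness for each $\nu$ is not uniform finiteness. The paper's compactness/weak--strong route sidesteps the issue precisely because it never compares velocities at $t=0$: it works with the vorticity bound coming from the contradiction hypothesis and lets the weak--strong uniqueness machinery absorb the low-regularity limit. If you want to keep your direct approach, you would need either to add a mild uniform hypothesis on the initial data (e.g.\ $\omg_0^\nu=\omg_0$, which suffices for the intended application, or a uniform $L^1$ bound), or to rerun the stability estimate at the level of the relative vorticity $\xi=r^{-1}\omg$ in the axisymmetric setting, where the transport structure may let you avoid the velocity comparison altogether.
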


Observe that 
  {Corollary \ref{cor:main2}
(\textit{e.g.} see \eqref{ex_l2_growh} of Remark \ref{rem_strange})} provides a class of Euler initial data for which the hypothesis of the above result is satisfied. Global in time existence and uniqueness of strong solutions with initial data $u_0 \in H^1(\bbR^3)$ to the axi-symmetric Navier--Stokes equations without swirl \eqref{eq:NSE} is well-known (\cite{UI}). 
\begin{proof}
	Towards a contradiction, assume that there exists a sequence $\nu_n\to 0^+$ such that \begin{equation*}
		\begin{split}
			\sup_{n\ge 1}\sup_{t \in [0,\infty]} \nrm{\omg^{\nu_n}(t,\cdot)}_{L^2(\bbR^3)} \le M
		\end{split}
	\end{equation*} for some $M>0$. We take some $T>0$, and since the sequence $\{ \omg^{\nu_n} \}_{n\ge 1}$ is precompact in the weak star topology of $L^\infty([0,T];L^2(\bbR^3))$, by passing to a subsequence if necessary, we derive \begin{equation*}
	\begin{split}
		\omg^{\nu_n}  \longrightarrow \hat{\omg}
	\end{split}
\end{equation*} weakly in star in $L^\infty([0,T];L^2(\bbR^3))$. It is not difficult to see that $\hat{\omg}$ is a weak solution to \eqref{eq:Euler-axisym-no-swirl}. Furthermore, using that $\omg^{\nu}(t,\cdot)$ is weakly continuous in time with values in $L^2(\bbR^3)$, it can be checked that $\hat{\omg}(t,\cdot) \rightarrow \omg_0$ as $t\to 0^+$ weakly in $L^2(\bbR^3)$. From weak-strong uniqueness of Euler in the class $u \in L^\infty_t (W^{1,\infty} \cap L^2)_{x}$ (see \cite{Danchin,CDL2}), we deduce that $\hat{\omg} = \omg$, and this is a contradiction since weak convergence implies \begin{equation*}
\begin{split}
	\sup_{t \in [0,T]} \nrm{\omg(t,\cdot)}_{L^2} \le M
\end{split}
\end{equation*} for all $T>0$. \end{proof} 

\section*{Acknowledgement}

\noindent KC has been supported by the National Research Foundation of Korea (NRF-2018R1D1A1B07043065)  and by the UBSI Research Fund(1.219114.01) of UNIST. IJ has been supported  by the New Faculty Startup Fund from Seoul National University and the Samsung Science and Technology Foundation under Project Number SSTF-BA2002-04. We thank Profs. Gianluca Crippa, Anna Mazzucato, and Vladimir \v{S}ver\'{a}k for helpful discussion and providing us several references. 

\bibliographystyle{plain}

\begin{thebibliography}{10}
	
	
	\bibitem{Pro1}
	Diego Ayala and Bartosz Protas. 
	\newblock Extreme vortex states and the growth of enstrophy in
	three-dimensional incompressible flows.
	\newblock {\em J. Fluid Mech.}, {818}, {772--806}, 2017.  	
	
	\bibitem{BC}
	H.~Bahouri and J.-Y. Chemin.
	\newblock \'{E}quations de transport relatives \'a\ des champs de vecteurs
	non-lipschitziens et m\'ecanique des fluides.
	\newblock {\em Arch. Rational Mech. Anal.}, 127(2):159--181, 1994.
	
	\bibitem{BKM}
	J.~T. Beale, T.~Kato, and A.~Majda.
	\newblock Remarks on the breakdown of smooth solutions for the {$3$}-{D}
	{E}uler equations.
	\newblock {\em Comm. Math. Phys.}, 94(1):61--66, 1984.
	
	\bibitem{BL1}
	Jean Bourgain and Dong Li.
	\newblock Strong ill-posedness of the incompressible {E}uler equation in
	borderline {S}obolev spaces.
	\newblock {\em Invent. Math.}, 201(1):97--157, 2015.
	
	\bibitem{CDL2}
	Yann Brenier, Camillo De~Lellis, and L\'{a}szl\'{o} Sz\'{e}kelyhidi, Jr.
	\newblock Weak-strong uniqueness for measure-valued solutions.
	\newblock {\em Comm. Math. Phys.}, 305(2):351--361, 2011.
	
	\bibitem{ChenHou}
	Jiajie Chen and Thomas~Y. Hou.
	\newblock Finite time blowup of 2{D} {B}oussinesq and 3{D} {E}uler equations
	with {$C^{1,\alpha}$} velocity and boundary.
	\newblock {\em Comm. Math. Phys.}, 383(3):1559--1667, 2021.
	
	\bibitem{CLL}
	M.~Cheng, J.~Lou, and T.~T. Lim.
	\newblock Numerical simulation of head-on collision of two coaxial vortex
	rings.
	\newblock {\em Fluid Dyn. Res.}, 50(6):065513, 24, 2018.
	
	\bibitem{Child07}
	Stephen Childress.
	\newblock Models of vorticity growth in {E}uler flows {I}, {A}xisymmetric flow
	without swirl and {I}{I}, {A}lmost 2-{D} dynamics.
	\newblock {\em AML reports 05-07 and 06-07, Courant institute of mathematical
		sciences}, 2007.
	
	\bibitem{Child08}
	Stephen Childress.
	\newblock Growth of anti-parallel vorticity in {E}uler flows.
	\newblock {\em Phys. D}, 237(14-17):1921--1925, 2008.
	
	\bibitem{ChildGil}
	Stephen Childress and Andrew~D. Gilbert.
	\newblock Eroding dipoles and vorticity growth for {E}uler flows in {$\Bbb
		R^3$}: the hairpin geometry as a model for finite-time blowup.
	\newblock {\em Fluid Dyn. Res.}, 50(1):011418, 40, 2018.
	
	\bibitem{Choi2020}
	Kyudong Choi.
	\newblock Stability of {H}ill's spherical vortex.
	\newblock {\em preprint, arXiv:2011.06808}.
	
	\bibitem{CJ_Hill}
	Kyudong Choi and In-Jee Jeong.
	\newblock Filamentation near {H}ill's vortex.
	\newblock {\em preprint, arXiv:2107.06035}.
	
	\bibitem{CWCCC}
	C.~Chu, C.~Wang, C.~Chang, R.~Chang, and W.~Chang.
	\newblock Head-on collision of two coaxial vortex rings: Experiment and
	computation.
	\newblock {\em Journal of Fluid Mechanics}, 296:39--71, 1995.
	
	\bibitem{Danchin}
	R.~Danchin.
	\newblock Axisymmetric incompressible flows with bounded vorticity.
	\newblock {\em Uspekhi Mat. Nauk}, 62:73--94, 2007.
	
	\bibitem{Den}
	Sergey~A. Denisov.
	\newblock Infinite superlinear growth of the gradient for the two-dimensional
	{E}uler equation.
	\newblock {\em Discrete Contin. Dyn. Syst.}, 23(3):755--764, 2009.
	
	\bibitem{Den2}
	Sergey~A. Denisov.
	\newblock Double exponential growth of the vorticity gradient for the
	two-dimensional {E}uler equation.
	\newblock {\em Proc. Amer. Math. Soc.}, 143(3):1199--1210, 2015.
	
	\bibitem{Den3}
	Sergey~A. Denisov.
	\newblock The sharp corner formation in 2{D} {E}uler dynamics of patches:
	infinite double exponential rate of merging.
	\newblock {\em Arch. Ration. Mech. Anal.}, 215(2):675--705, 2015.
	
	
	\bibitem{Do}
	Tam Do.
	\newblock On vorticity gradient growth for the axisymmetric 3{D} {E}uler
	equations without swirl.
	\newblock {\em Arch. Ration. Mech. Anal.}, 234(1):181--209, 2019.
	
	\bibitem{DoGi}
	Charles~R. Doering and J.~D. Gibbon.
	\newblock {\em Applied analysis of the {N}avier-{S}tokes equations}.
	\newblock Cambridge Texts in Applied Mathematics. Cambridge University Press,
	Cambridge, 1995.
	
	\bibitem{Dyson}
	F.~W. Dyson.
	\newblock The potential of an anchor ring.
	\newblock {\em Philosophical Transactions of the Royal Society of London. A},
	184:43--95, 1893.
	
	\bibitem{E3}
	Tarek~M. Elgindi.
	\newblock Finite-time singularity formation for ${C}^{1,\alpha}$ solutions to
	the incompressible {E}uler equations on $\mathbb{R}^3$.
	\newblock {\em Ann. of Math., to appear, arXiv:1904.04795}.
	
	\bibitem{EGM}
	Tarek~M. Elgindi, Tej-Eddine Ghoul, and Nader Masmoudi.
	\newblock On the stability of self-similar blow-up for $c^{1,\alpha}$ solutions
	to the incompressible {E}uler equations on $\mathbb{R}^3$.
	\newblock {\em preprint, arXiv:1910.14071}.
	
	\bibitem{EJO}
	Tarek~M. Elgindi and In-Jee Jeong.
	\newblock The incompressible {E}uler equations under octahedral symmetry:
	singularity formation in a fundamental domain.
	\newblock {\em Adv. Math., to appear, arXiv:2001.07840}.
	
	\bibitem{EJ}
	Tarek~M. Elgindi and In-Jee Jeong.
	\newblock Ill-posedness for the {I}ncompressible {E}uler {E}quations in
	{C}ritical {S}obolev {S}paces.
	\newblock {\em Ann. PDE}, 3(1):3:7, 2017.
	
	\bibitem{EJE}
	Tarek~M. Elgindi and In-Jee Jeong.
	\newblock Finite-{T}ime {S}ingularity {F}ormation for {S}trong {S}olutions to
	the {A}xi-symmetric 3{D} {E}uler {E}quations.
	\newblock {\em Ann. PDE}, 5(2):Paper No. 16, 2019.
	
	\bibitem{FJL}
	Aseel Farhat, Michael Jolly, and Evelyn Lunasin.
	\newblock Bounds on energy and enstrophy for the 3{D}
	{N}avier-{S}tokes-{$\alpha$} and {L}eray-{$\alpha$} models.
	\newblock {\em Commun. Pure Appl. Anal.}, 13(5):2127--2140, 2014.
	
	\bibitem{FeSv}
	H.~Feng and V.~{\v{S}}ver{\'a}k.
	\newblock On the {C}auchy problem for axi-symmetric vortex rings.
	\newblock {\em Arch. Ration. Mech. Anal.}, 215:89--123, 2015.
	
	\bibitem{FS}
	R.~Finn and D.~R. Smith.
	\newblock On the stationary solutions of the {N}avier-{S}tokes equations in two
	dimensions.
	\newblock {\em Arch. Rational Mech. Anal.}, 25:26--39, 1967.
	
	\bibitem{GaSa}
	Pascal Gamblin and Xavier Saint~Raymond.
	\newblock On three-dimensional vortex patches.
	\newblock {\em Bull. Soc. Math. France}, 123(3):375--424, 1995.
	
	\bibitem{GWRW}
	Hui Guan, Zhi-Jun Wei, Elizabeth~Rumenova Rasolkova, and Chui-Jie Wu.
	\newblock Numerical simulations of two coaxial vortex rings head-on collision.
	\newblock {\em Adv. Appl. Math. Mech.}, 8(4):616--647, 2016.
	
	\bibitem{HeKi}
	Siming He and Alexander Kiselev.
	\newblock Small-scale creation for solutions of the {SQG} equation.
	\newblock {\em Duke Math. J.}, 170(5):1027--1041, 2021.
	
	\bibitem{HH}
	H.~Helmholtz.
	\newblock {\em \"{U}ber {I}ntegrale der hydrodynamischen {G}leichungen, welche
		den {W}irbelbewegungen entsprechen}, volume~55, pages 25--55.
	\newblock 1858.
	
	\bibitem{Hou}
	Thomas Hou.
	\newblock Potential singularity of the 3{D} {E}uler equations in the interior
	domain.
	\newblock {\em arXiv:2107.05870}.
	
	\bibitem{youtube1}
	https://www.youtube.com/watch?v=Qxr7tsZUy1c.
	\newblock Head-on vortex ring collision, 2020.
	
	\bibitem{Hu}
	Chaocheng Huang.
	\newblock Singular integral system approach to regularity of 3{D} vortex
	patches.
	\newblock {\em Indiana Univ. Math. J.}, 50(1):509--552, 2001.
	
	\bibitem{ISG99}
	Drago\c{s} Iftimie, Thomas~C. Sideris, and Pascal Gamblin.
	\newblock On the evolution of compactly supported planar vorticity.
	\newblock {\em Comm. Partial Differential Equations}, 24:1709--1730, 1999.
	
	\bibitem{J-JMFM}
	In-Jee Jeong.
	\newblock Loss of {R}egularity for the 2{D} {E}uler {E}quations.
	\newblock {\em J. Math. Fluid Mech.}, 23(4):Paper No. 95, 2021.
	
	\bibitem{JKim}
	In-Jee Jeong and Junha Kim.
	\newblock Strong illposedness for {S}{Q}{G} in critical {S}obolev spaces.
	\newblock {\em arXiv:2107.07739}.
	
	\bibitem{JY}
	In-Jee Jeong and Tsuyoshi Yoneda.
	\newblock Vortex stretching and enhanced dissipation for the incompressible
	3{D} {N}avier-{S}tokes equations.
	\newblock {\em Math. Ann.}, 380(3-4):2041--2072, 2021.
	
	\bibitem{KaMi}
	T.~Kambe and T.~Minota.
	\newblock Acoustic wave radiated by head-on collision of two vortex rings.
	\newblock {\em Proc. R. Soc. Lond. A}, 386:277--308, 1983.
	
	\bibitem{KaMy}
	T.~Kambe and U.~Mya~Oo.
	\newblock An axisymmetric viscous vortex motion and its acoustic emission.
	\newblock {\em J. Phys. Soc. Japan}, 53:2263–2273, 1984.
	
	
	\bibitem{Pro2}
	Di Kang, Dongfang Yun, and Bartosz Protas. 
	\newblock Maximum amplification of enstrophy in three-dimensional Navier-Stokes flows. 
	\newblock {\em J. Fluid Mech.} 893, A22, 35 pp, 2020. 
	
	\bibitem{KiLi}
	Alexander Kiselev and Chao Li.
	\newblock Global regularity and fast small-scale formation for {E}uler patch
	equation in a smooth domain.
	\newblock {\em Comm. Partial Differential Equations}, 44(4):279--308, 2019.
	
	\bibitem{KRYZ}
	Alexander Kiselev, Lenya Ryzhik, Yao Yao, and Andrej Zlato{\v{s}}.
	\newblock Finite time singularity for the modified {SQG} patch equation.
	\newblock {\em Ann. of Math. (2)}, 184(3):909--948, 2016.
	
	\bibitem{KS}
	Alexander Kiselev and Vladimir {\v{S}}ver{\'a}k.
	\newblock Small scale creation for solutions of the incompressible
	two-dimensional {E}uler equation.
	\newblock {\em Ann. of Math. (2)}, 180(3):1205--1220, 2014.
	
	\bibitem{Lamb}
	H.~Lamb.
	\newblock {\em Hydrodynamics}.
	\newblock Cambridge Univ. Press., 3rd ed. edition, 1906.
	
	\bibitem{LN}
	T.~T. Lim and T.~B. Nickels.
	\newblock Instability and reconnection in the head-on collision of two vortex
	rings.
	\newblock {\em Nature}, 357:225–227, 1992.
	
	\bibitem{LuLu}
	Lu~Lu.
	\newblock {\em Bounds on the enstrophy growth rate for solutions of the
		three-dimensional {N}avier-{S}tokes equations}.
	\newblock ProQuest LLC, Ann Arbor, MI, 2006.
	\newblock Thesis (Ph.D.)--University of Michigan.
	
	\bibitem{LD}
	Lu~Lu and Charles~R. Doering.
	\newblock Limits on enstrophy growth for solutions of the three-dimensional
	{N}avier-{S}tokes equations.
	\newblock {\em Indiana Univ. Math. J.}, 57(6):2693--2727, 2008.
	
	\bibitem{MB}
	Andrew~J. Majda and Andrea~L. Bertozzi.
	\newblock {\em Vorticity and incompressible flow}, volume~27 of {\em Cambridge
		Texts in Applied Mathematics}.
	\newblock Cambridge University Press, Cambridge, 2002.
	
	\bibitem{Os}
	Y.~Oshima.
	\newblock Head-on collision of two vortex rings.
	\newblock {\em J. Phys. Soc. Japan}, 44:328--331, 1978.
	
	\bibitem{PeRi}
	A.~J. Peace and N.~Riley.
	\newblock A viscous vortex pair in ground effect.
	\newblock {\em J. Fluid Mech.}, 129:409–426, 1983.
	
	\bibitem{Pel}
	Dmitry Pelinovsky.
	\newblock Enstrophy growth in the viscous {B}urgers equation.
	\newblock {\em Dyn. Partial Differ. Equ.}, 9(4):305--340, 2012.
	
	\bibitem{Pe5}
	Richard~B. Pelz.
	\newblock Symmetry and the hydrodynamic blow-up problem.
	\newblock {\em J. Fluid Mech.}, 444:299--320, 2001.
	
	\bibitem{PoPr}
	Diogo Po\c{c}as and Bartosz Protas.
	\newblock Transient growth in stochastic {B}urgers flows.
	\newblock {\em Discrete Contin. Dyn. Syst. Ser. B}, 23(6):2371--2391, 2018.
	
	\bibitem{Raymond}
	X.~Saint~Raymond.
	\newblock Remarks on axisymmetric solutions of the incompressible {E}uler
	system.
	\newblock {\em Comm. Partial Differential Equations}, 19(1-2):321--334, 1994.
	
	\bibitem{S-patch}
	Philippe Serfati.
	\newblock R\'{e}gularit\'{e} stratifi\'{e}e et \'{e}quation d'{E}uler {$3$}{D}
	\`a temps grand.
	\newblock {\em C. R. Acad. Sci. Paris S\'{e}r. I Math.}, 318(10):925--928,
	1994.
	
	\bibitem{ShLe}
	K.~Shariff and A.~Leonard.
	\newblock Vortex {R}ings.
	\newblock {\em Ann. Rev. Fluid Mech.}, 24:235--279, 1992.
	
	\bibitem{SLZF}
	K.~Shariff, A.~Leonard, N.~J. Zabusky, and J.~H. Ferziger.
	\newblock Acoustics and dynamics of coaxial interacting vortex rings.
	\newblock {\em In Vortex Motion (ed. H. Hasimoto and T. Kambe)}, 1988.
	
	\bibitem{SMO}
	M.~J. Shelley, D.~I. Meiron, and S.~A. Orszag.
	\newblock Dynamical aspects of vortex reconnection of perturbed anti-parallel
	vortex tubes.
	\newblock {\em J. Fluid Mech.}, 246:613--652, 1993.
	
	\bibitem{SSH}
	S.~Stanaway, K.~Shariff, and F.~Hussain.
	\newblock Head-on collision of viscous vortex rings.
	\newblock {\em Proceedings of the Summer Program, Center for Turbulence
		Research}, 1988.
	
	\bibitem{Thomson}
	W.~Thomson.
	\newblock The translatory velocity of a circular vortex ring.
	\newblock {\em Phil. Mag. (Ser. 4)}, 34:511--512, 1867.
	
	\bibitem{UI}
	M.~R. Ukhovskii and V.~I. Yudovich.
	\newblock Axially symmetric flows of ideal and viscous fluids filling the whole
	space.
	\newblock {\em J. Appl. Math. Mech.}, 32:52--61, 1968.
	
	\bibitem{X}
	Xiaoqian Xu.
	\newblock Fast growth of the vorticity gradient in symmetric smooth domains for
	2{D} incompressible ideal flow.
	\newblock {\em J. Math. Anal. Appl.}, 439(2):594--607, 2016.
	
	\bibitem{Yang}
	Cheng Yang.
	\newblock Vortex motion of the {E}uler and lake equations.
	\newblock {\em J. Nonlinear Sci.}, 31(3):Paper No. 48, 21, 2021.
	
	\bibitem{Y3}
	V.~I. Yudovich.
	\newblock On the loss of smoothness of the solutions of the {E}uler equations
	and the inherent instability of flows of an ideal fluid.
	\newblock {\em Chaos}, 10(3):705--719, 2000.
	
	\bibitem{Pro3}
	Dongfang Yun and Bartosz Protas.
	\newblock Maximum rate of growth of enstrophy in solutions of the fractional Burgers equation. 
	\newblock {\em J. Nonlinear Sci.} 28, no. 1, 395--422, 2018. 
	
	\bibitem{Z}
	Andrej Zlato{\v{s}}.
	\newblock Exponential growth of the vorticity gradient for the {E}uler equation
	on the torus.
	\newblock {\em Adv. Math.}, 268:396--403, 2015.
	
\end{thebibliography}

\end{document}